\newcommand{\D}{\mathcal{D}}
\newcommand{\B}{\mathcal{B}}
\newcommand{\F}{\mathcal{F}}
\newcommand{\R}{\mathbb{R}}
\newcommand{\N}{\mathbb{N}}
\newcommand{\h}{\mathcal{H}}
\newcommand{\E}{\mathbb{E}}
\newcommand{\PP}{\mathbb{P}}
\newcommand{\one}{{\rm 1\hspace{-0.11cm}I}}
\newcommand{\half}{\frac{1}{2}}
\newcommand{\tx}{{\tt x}}
\newcommand{\Ltwo}{\mathrm{L}_2}
\newcommand{\A}{\bf A}
\newcommand{\kla}{\left ( }
\newcommand{\mer}{\right ) }
\newcommand{\non}{\nonumber }
\theoremstyle{plain}
\newtheorem{theorem}{Theorem}[section]
\newtheorem{lemma}[theorem]{Lemma}
\newtheorem{corollary}[theorem]{Corollary}
\theoremstyle{definition}
\newtheorem{definition}[theorem]{Definition}
\newtheorem{remark}[theorem]{Remark}
\newtheorem{example}[theorem]{Example}
\newcommand{\equa}{\begin{eqnarray*}}
\newcommand{\tion}{\end{eqnarray*}}
\newcommand{\equal}{\begin{eqnarray}}
\newcommand{\tionl}{\end{eqnarray}}
\newcommand{\m}{\mathbbm{m}}
\newcommand{\DD}{{\mathbb{D}_{1,2}}}
\newcommand{\les}{\hspace{-2em}}
\newcommand{\vare}{\varepsilon}
\newcommand{\al}{\alpha}
\newcommand{\om}{\omega}
\newcommand{\Om}{\Omega}
\newcommand{\ftn}{\mathcal{F}}
\def\timenow{\@tempcnta\time
\@tempcntb\@tempcnta
\divide\@tempcntb60
\ifnum10>\@tempcntb0\fi\number\@tempcntb
:\multiply\@tempcntb60
\advance\@tempcnta-\@tempcntb
\ifnum10>\@tempcnta0\fi\number\@tempcnta}
\title{Malliavin derivative of random functions and applications to L\'evy driven BSDEs }
\author{Christel Geiss$^1$   \hspace{1.3em}   Alexander Steinicke$^2$\\ 
}
\date{}
\begin{document}

\maketitle
\begin{abstract}

We   consider measurable  $F: \Omega \times \mathbb{R}^d \to \mathbb{R}$ where  for any $x$ the random variable $F(\cdot, x)$ belongs  to the  Malliavin Sobolev space $\mathbb{D}_{1,2}$ (with respect to a L\'evy process)  and 
provide  sufficient conditions on $F$ and  $G_1,\ldots,G_d \in \mathbb{D}_{1,2}$ such that  $F(\cdot, G_1,\ldots,G_d) \in \mathbb{D}_{1,2}.$ 

The  above result  is applied to  show Malliavin differentiability of solutions to BSDEs (backward stochastic differential equations) driven by L\'evy noise where  the generator 
is  given by a progressively measurable function $f(\omega,t,y,z).$   

\end{abstract}

\vspace{1em} 
{\noindent \textit{Keywords:} Malliavin calculus for L\'evy processes;  L\'evy driven BSDEs.
}
{\noindent
\footnotetext[1]{Department of Mathematics and Statistics, University of Jyv\"askyl\"a, Finland. \\ \hspace*{1.5em}
 {\tt christel.geiss{\rm@}jyu.fi}}
\footnotetext[2]{Department of Mathematics, University of Innsbruck, Austria. \\ \hspace*{1.5em}  {\tt alexander.steinicke{\rm@}uibk.ac.at}}}


\section{Introduction}\label{1}
Backward stochastic differential equations (BSDEs)  have been studied with growing interest  and from various perspectives. They appear in stochastic control theory,
as Feynman-Kac representation of second order semilinear PDEs, and have many applications in Finance and Insurance (see, for instance,  El Karoui et al.  \cite{ElKarouiPengQuenez},
the survey paper from Bouchard et al.   \cite{BouchardElieTouzi} or  Delong \cite{Delong}, and the references therein).

 Pardoux and Peng  have considered in  \cite{PP1} and \cite{PP2}   Forward Backward SDEs (FBSDEs) of the form
\equa
X_s &=& x + \int_t^s a(X_r)  dr + \int_t^s b(X_r)dW_r   \\
Y_s  &=& g(X_T) + \int_s^T f(X_r,Y_r,Z_r) dr +  \int_s^T Z_rdW_r, \quad t \le s\le T,
\tion
where $W$ denotes the Brownian motion. Under suitable smoothness and boundedness conditions on the coefficients they have shown that the two-parameter
process $\D_{\theta} Y_s $ is a.s. continuous in $s \in [\theta, T]$ and, moreover,   $\{ \D_{\theta} Y_{\theta} := \lim_{s \downarrow \theta} \D_{\theta} Y_s :   \theta \in [t,T]\} $
is a version of the process $\{Z_s: s \in [t, T]\}.$ In this way, using the relation
\[
    Y_s= \E \left [ g(X_T) + \int_s^T f(X_r,Y_r,Z_r) dr  \middle | \ftn^W_s \right]
\]
it is possible to represent $Z$ (with the right interpretation) as

\[
\left (\D_s \E\left  [ g(X_T) + \int_s^T f(X_r,Y_r,Z_r) dr \middle | \ftn^W_s \right ]   \right )_{s \in [t, T]}.
\]
These representations  turned out to be useful in regularity estimates for $Y$ and $Z$ which  play an important role for estimates  of convergence rates of
time-discretiza\-tions (see, for example, \cite{BouchardElie}, \cite{BouchardTouzi}, \cite{BouchardElieTouzi}, \cite{ImkellerDelong}).  \\
El Karoui et al. generalized in \cite{ElKarouiPengQuenez}   this result to  a class of progressively measurable  generators  $(\om,t) \mapsto f(\om,t,y,z).$  
 Also in the Brownian setting, Ankirchner et al. \cite{AnImRe}  and  Mastrolia et al. \cite{MaPoRe} extended the result to generators of BSDEs with quadratic growth. 

On the canonical L\'evy space, Malliavin differentiability of BSDEs  with jumps has been considered by Delong  in  \cite{Delong} and 
by Delong and Imkeller for  delayed BSDEs  in \cite{ImkellerDelong}.  \\
\smallskip
In this paper, we first consider a measurable function $F: \Om \times \R^d \to \R$ where  $F(\cdot, x)$ belongs  to the  Malliavin Sobolev space $\DD$ for  any $x \in \R^d.$
We ask for sufficient conditions on $F$ and  $G_1,\ldots,G_d \in \DD$ such that  $F(\cdot, G_1,\ldots,G_d) \in \DD.$
Our aim was to find very general conditions such that the result is also applicable for BSDEs with non-Lipschitz generators.
As we  work  in the L\'evy setting, the results hold of course especially for the Brownian case. In this respect, we could generalize
the conditions given in  \cite[Theorem 5.3]{ElKarouiPengQuenez} by not imposing the finiteness of fourth moments on the generator and the terminal condition 
(see Theorem \ref{diffthm} below). Moreover, we  provide a rigorous proof of the extended chain rule
for the Malliavin derivative of $F(\cdot, G_1,\ldots,G_d)$ in the Brownian case (see Theorem \ref{Mall-diff-thm}).
Such  a chain rule was already used in \cite{ElKarouiPengQuenez}.
Compared with  \cite{Delong}  or   \cite{ImkellerDelong}, we do  not require  a canonical L\'evy space to state Malliavin differentiability of BSDEs  (Theorem \ref{diffthm}).
\bigskip

The paper is organized as follows: Section 2 contains the setting and a collection of used notation. \\
Section 3 starts with the definition of the Malliavin derivative in the
L\'evy setting.   The Malliavin calculus based on chaos expansions in the L\'evy case has been treated in various papers, e.g. by L\o kka \cite{Loekka2}, Lee and Shih \cite{LeeShih}, Di Nunno et al. \cite{dinun}.In our paper, we recall a method used in  \cite{Steinicke} which is related to Picard's difference operator approach  \cite{picard}.  It allows to compute the Malliavin derivative $\D_{t,x}$ for $x\neq 0$ without knowing the chaos expansion
and without imposing the condition that the underlying probability space is  specified, e.g. as the canonical L\'evy space from \cite{suv2} or the probability space of Section 4 in \cite{Loekka2}.  Based on the fact that  $\D_{t,x}$ for $x\neq 0$ and $\D_{t,0}$ are of different nature we solve the question about the Malliavin  differentiability of $F(\cdot, G_1,\ldots,$ $ G_d) \in \DD$ in two steps:  In Subsection 3.3.1 we treat the question concerning $\D_{t,x}, x\neq 0$, while Subsection 3.3.2 contains the case $\D_{t,0}$. In the latter, we use the result from \cite{Sugita}  that for the Brownian motion the Malliavin Sobolev spaces
$\mathbb{D}^W_{1,p}(E)$ with $p >1$ ($E$ denotes a separable Hilbert space) coincide with the Kusuoka-Stroock Sobolev spaces which are defined  using the concept of ray absolute continuity and
stochastic Gateaux differentiability.   \\
In Section 4 we formulate the conditions on the BSDE such that it is Malliavin differentiable, present the proof and give an example.

\section{Setting}

Let $X=\left(X_t\right)_{t\in{[0,T]}}$ be a c\`adl\`ag L\'evy process on a complete probability space $(\Omega,\mathcal{F},\mathbb{P})$
with L\'evy measure $\nu$. We will denote the augmented natural filtration of $X$ by
$\left({\mathcal{F}_t}\right)_{t\in{[0,T]}}$ and assume that $\mathcal{F}=\mathcal{F}_T.$ \\
 \bigskip
 The L\'evy-It\^o decomposition of a L\'evy process $X$ can be written as
\begin{equation}\label{LevyIto}
X_t = \gamma t + \sigma W_t   +  \int_{{]0,t]}\times \{ |x|\le1\}} x\tilde{N}(ds,dx) +  \int_{{]0,t]}\times \{ |x|> 1\}} x  N(ds,dx),
\end{equation}
where $\sigma\geq 0$, $W$ is a Brownian motion and $N$ ($\tilde N$) is the (compensated) Poisson random measure corresponding to $X$.\\
The process $$\left(\int_{{]0,t]}\times \{ |x|\le1\}} x\tilde{N}(ds,dx) +  \int_{{]0,t]}\times \{ |x|> 1\}} x  N(ds,dx)\right)$$
is the jump part of $X$ and will be denoted by $J$. Note that the $\mathbb{P}$-augmented filtrations $(\ftn^W_t)_{t\in{[0,T]}}$ resp. $(\ftn^J_t)_{t\in{[0,T]}}$ generated by the processes $W$ resp. $J$ satisfy $$\ftn^W_t\vee\ftn^J_t=\ftn_t,$$ (see \cite[Lemma 3.1]{suv2}) thus spanning the original filtration generated by $X$ again.
Throughout the paper we will use the notation $X(\omega)=\left(X_t(\omega)\right)_{t\in{[0,T]}}$  for  sample trajectories. Let $\Delta X$ given by
$\Delta X_t:=X_t-\lim_{s\nearrow t}X_s$ denote the process of the jumps of $X$.\\
\bigskip
 Let
\[\mu(dx):=\sigma^2\delta_0(dx)+\nu(dx)\]
and
\equa
\m(dt,dx) :=(\lambda\otimes\mu) (dt,dx)
\tion
where $\lambda$ denotes the Lebesgue measure. We define the independent random measure  (in the sense of \cite[p. 256]{ito}) $M$ by
\equal  \label{measureM}
   M(dt,dx):=\sigma dW_t\delta_0(dx) +\tilde N(dt,dx)
\tionl
 on sets $B \!\in \!\mathcal{B}([0,T]\times\R)$ with  $\m(B) < \infty$.
It holds $\E M(B)^2 = \m(B).$

 In \cite{suv2}, Sol\'e et al.~consider  the independent random measure $\sigma dW_t\delta_0(dx)$ $+$ $x\tilde N(dt,dx).$ Here, in order to match the notation used for BSDEs, we work with the {\it equivalent} approach where the
Poisson random measure is not multiplied with $x$.\smallskip \\
We close this section with notation
for  c\`adl\`ag  processes on the path space and for BSDEs. \\ \bigskip

{\bf Notation:  Skorohod space}
\begin{itemize}
\item With $D{[0,T]}$ we denote the Skorohod space of c\`adl\`ag functions on the interval ${[0,T]}$ equipped with the Skorohod topology. The $\sigma$-algebra
$\mathcal{B}(D{[0,T]})$  is the Borel $\sigma$-algebra  i.e.~it is generated by the open sets of $D{[0,T]}.$ It coincides with the $\sigma$-algebra generated by the family of coordinate
projections $\left(p_t\colon D{[0,T]}\to \R,\ \tx  \mapsto \tx(t),\ t\geq 0\right)$  (see Theorem 12.5 of  \cite{Billing} for instance).
\item For a measurable mapping $\mathrm{Y}\colon\Omega\to D{[0,T]},\omega \mapsto \mathrm{Y}(\omega)$, the probability
measure $\mathbb{P}_\mathrm{Y}$ on $\left(D{[0,T]},\mathcal{B}\left(D{[0,T]}\right)\right)$ denotes the image measure of $\mathbb{P}$ under $\mathrm{Y}$.
\item For a fixed $t\in{[0,T]}$ the
notation
\equal \label{cut-off-in-t}
\tx^t(s):=\tx(t\wedge s),\text{ for all } s\in{[0,T]}
\tionl  induces the natural identification $$D{[0,t]}=\left\{\tx\in D{[0,T]} : \tx^t=\tx \right\}.$$ By this identification
we define a filtration on this space by
\equal \label{filtrationG-t}
\mathcal{G}_t=\sigma\left(\mathcal{B}\left(D{[0,t]}\right)\cup \mathcal{N}_X{[0,T]}\right), \quad 0\leq t\leq T,
\tionl where $\mathcal{N}_X{[0,T]}$ denotes the
null sets of $\mathcal{B}\left(D{[0,T]}\right)$ with respect to the  image measure $\mathbb{P}_X$  of the L\'evy process $X$. For more details on $D{[0,T]}$, see \cite{Billing} and \cite[Section 4]{delzeith}.
\end{itemize}
{\bf Notation for  BSDEs}
\begin{itemize}
\item For  $1\le p \le \infty$ let  $\mathcal{S}_p$ denote the  space of all $(\mathcal{F}_t)$-progressively measurable and c\`adl\`ag processes  $Y\colon\Omega\times{[0,T]} \rightarrow \R$ such that
\equa
\left\|Y\right\|_{\mathcal{S}_p}:=\|\sup_{0\leq t\leq T} \left|Y_{t}\right| \|_{\mathrm{L}_p} <\infty.
\tion

\item We define $\Ltwo(W) $ as the space of all $(\mathcal{F}_t)$-progressively measurable processes $Z\colon \Omega\times{[0,T]}\rightarrow \R$  such that
\equa
\left\|Z\right\|_{\Ltwo(W) }^2:=\E\int_0^T\left|Z_s\right|^2 ds<\infty.
\tion

\item Let $\R_0:= \R\!\setminus\!\{0\}$. We define $\Ltwo(\tilde N)$ as the space of all random fields $U\colon \Omega\times{[0,T]}\times{\R_0}\rightarrow \R$ which are measurable with respect to
$\mathcal{P}\otimes\mathcal{B}(\R_0)$ (where $\mathcal{P}$ denotes the predictable $\sigma$-algebra on $\Omega\times[0,T]$ generated
by the left-continuous $(\mathcal{F}_t)$-adapted processes) such that
\equa
\left\|U\right\|_{\Ltwo(\tilde N) }^2:=\E\int_{{[0,T]}\times{\R_0}}\left|U_s(x)\right|^2 ds \nu(dx)<\infty.
\tion
\item We define $\Ltwo(M)$ by $\Ltwo(M):=\Ltwo(W)\oplus\Ltwo(\tilde N)$ which is the space of all random fields $\underline{Z}\colon \Omega\times{[0,T]}\times\R\rightarrow \R$ which are measurable with respect to
$\mathcal{P}\otimes\mathcal{B}(\R)$  such that
\equa
\left\| \underline{Z}
\right\|_{\Ltwo(M) }^2:=\E\int_{[0,T]\times\R}\left|\underline{Z}_{s,x}\right|^2 \m(ds,dx)<\infty.
\tion

\item $\Ltwo(\nu):= \Ltwo(\R_0, \mathcal{B}(\R_0), \nu).$

\item $|\cdot|$ denotes a norm in $\R^n.$

\item For later use we recall the notion of the predictable projection of a stochastic process depending on parameters.

According to \cite[Proposition 3]{StrickerYor}  (see also  \cite[Proposition 3]{Meyer} or \cite[Lemma 2.2]{Ankirch}) for any $z\in\Ltwo(\PP\otimes\m):=
\Ltwo(\Omega\times {[0,T]}\times\R,\mathcal{F}_T\otimes\mathcal{B} ( [0,T]\times\R),
\mathbb{P} \otimes\m )$  there exists a process
\[^pz \in \mathrm{L}_2\left(\Omega\times {[0,T]}\times\R,\mathcal{P}\otimes\mathcal{B}(\R), \mathbb{P}\otimes\m\right)\]
such that for any fixed  $x\in\R$ the function
$ (^pz)_{\cdot,x}$ is a version of the predictable projection (in the classical sense, see e.g. \cite[Definition 2.1]{Ankirch}) of $ z_{ \cdot,x}.$ In the following we will always use this result to get predictable projections which are measurable w.r.t. a parameter. Again, we call $^pz$ the predictable projection of $z$.
\end{itemize}


\section{Malliavin calculus}
\subsection{Definition of \texorpdfstring{$\DD$}{DD} using chaos expansions}

The  random measure $M$   defined in  \eqref{measureM}  allows  to introduce the Malliavin derivative defined via chaos expansions (see, for example, \cite{suv}) as follows:
Any  $\xi \in \mathrm{L}_2:=\mathrm{L}_2(\Om,\ftn,\mathbb{P})$  has a unique chaos expansion (see \cite[Theorem 2]{ito})
\begin{equation*}
\xi=\sum_{n=0}^\infty I_n(\tilde f_n)
\end{equation*}
and it holds
\equa
    \E \xi^2:=   \|\xi\|^2_{\Ltwo}= \sum_{n=0}^\infty n!\left\|\tilde f_n\right\|_{\mathrm{L}^n_2}^2
\tion
where the $\tilde f_n\in  \widetilde{\mathrm{L}}_2^n:=\widetilde{\mathrm{L}}_2\left({\left({[0,T]}\times\R\right)}^n,\m^{\otimes n}\right),$ the subspace of symmetric functions
from $\mathrm{L}^n_2:=\mathrm{L}_2\left({\left({[0,T]}\times\R\right)}^n,\m^{\otimes n}\right),$
 and $I_n$ denotes the $n$-th multiple integral with respect to $M$ from \eqref{measureM}. The multiple integrals with respect to $M$ can be defined as follows:
If $n=0$ set $\mathrm{L}_2^0:=\R$ and $I_0(f_0):=f_0$ for $f_0\in\R.$ For $n\ge 1$ we start with a simple function   $f_n\in  \mathrm{L}_2^n$ given by
 $$f_n\left((t_1,x_1),\dotsc,(t_n,x_n)\right)=\sum_{k=1}^m a_k \prod_{i=1}^n\one_{B_i^k}(t_i,x_i),$$
where the sets $B_i^k \in \mathcal{B}({[0,T]}\times\R)  $  for $k=1,\ldots,m, i=1,\dotsc,n$ are disjoint for fixed $k$, and $\m(B_i^k)< \infty$ for all $i$ and $k.$ Then $$I_n(f_n):=\sum_{k=1}^m a_k \prod_{i=1}^n M(B_i^k).$$
By denseness of these simple functions in $ \mathrm{L}_2^n$ and by linearity and continuity of $I_n$, one extends the domain of the $n$-fold multiple stochastic integral $I_n$ to become a mapping $I_n\colon  \mathrm{L}_2^n\to  \mathrm{L}_2.$ It holds $I_n(f_n) =I_n(\tilde{f}_n)$ where $\tilde{f}_n$
denotes the symmetrization of $f_n$ w.r.t.~the $n$ pairs of variables in $[0,T]\times \R.$
For $f_n\in   \mathrm{L}_2^n$ and   $g_m\in  \mathrm{L}_2^m$ we have
$$\E I_n(f_n)I_m(g_m)=\begin{cases} n! \int_{([0,T]\times\R)^n}\tilde f_n \tilde g_n d\m^{\otimes n},\quad &n=m,\\ 0,\quad &n\neq m.\end{cases}$$

The space  $\DD$ consists of all  random variables $\xi \in \Ltwo$  such that
\equa
   \|\xi\|^2_{\DD}:= \sum_{n=0}^\infty (n+1)!\left\|\tilde f_n\right\|_{\mathrm{L}^n_2}^2<\infty.
\tion
The Malliavin derivative is defined for $\xi \in \DD$ by
\begin{equation*}
\D_{t,x}\xi:=\sum_{n=1}^\infty nI_{n-1}\left(\tilde f_n\left((t,x),\ \cdot\ \right)\right),
\end{equation*}
for $\mathbb{P}\otimes\m$-a.a. $(\omega,t,x)\in\Omega\times{[0,T]}\times\R$. Thus $\D \xi  \in  \Ltwo(\mathbb{P}\otimes\m )$.  \bigskip \\
We also consider
\equal \label{D-0}
\mathbb{D}_{1,2}^0&:= & \bigg \{\xi=\sum_{n=0}^\infty I_n(\tilde f_n) \in \Ltwo\colon \tilde f_n \in    \widetilde{\mathrm{L}}_2^n, n\in \N,  \non \\
   &&  \quad \quad \quad  \sum_{n=1}^\infty  (n+1)! \int_0^T  \|\tilde f_n((t,0),\cdot) \|_{\mathrm{L}^{n-1}_2}^2 dt < \infty  \bigg \}
\tionl
and
\equa
\mathbb{D}_{1,2}^{\R_0}&:= & \bigg \{\xi=\sum_{n=0}^\infty I_n(\tilde f_n) \in \Ltwo\colon \tilde f_n \in    \widetilde{\mathrm{L}}_2^n, n\in \N, \\
   &&  \quad \quad \quad  \sum_{n=1}^\infty  (n+1)! \int_{[0,T]\times \R_0}  \|\tilde f_n((t,x),\cdot) \|_{\mathrm{L}^{n-1}_2}^2 \m( dt,dx) < \infty  \bigg \}.
\tion

 \equal \label{D-intersection}
 \text{ If } \sigma >0  \text{ and  }   \nu \neq 0    \text{\,\, it holds  \,\, }  
\DD = \mathbb{D}_{1,2}^0 \cap \mathbb{D}_{1,2}^{\R_0}.
\tionl 
\subsection{From canonical to general probability spaces}
\bigskip
Sol\'e et al. introduced in \cite{suv2} the canonical  L\'evy space and proved that for $x\neq 0$ the  Malliavin derivative  $\D_{r,x} \xi$  (defined via chaos expansions)  equals in this space an increment quotient.   We will  discuss here how to transfer results   about random variables  from the canonical  L\'evy space to any  general probability space carrying a  L\'evy  process
provided that the regarded $\sigma$ -algebra is the completion of the one generated by the  L\'evy  process.\smallskip \\
This technique is needed, since key theorems of this section, like Theorem \ref{Mall-diff-thm}, will be proven on specific probability spaces. However, the formulation of its assertion is possible also on general probability spaces. The validity of the assertion is then guaranteed by the transfer technique  given in Theorem \ref{kernel-unique}. Hence, in Section 4, where we apply this section's theorems to BSDEs, we are not restricted to certain specific probability spaces.\smallskip

Assume $\left(\Omega_1,\mathcal{F}_1,\mathbb{P}_1\right), \left(\Omega_2,\mathcal{F}_2,\mathbb{P}_2\right)$ to be complete probability spaces with c\`adl\`ag L\'evy processes $X^i=(X^i_t)_{t \in [0,T]}$, $X^i_t\colon \Omega_i\to \R$, such that $X^i$ corresponds to a given L\'evy triplet $(\gamma,\sigma,\nu)$
for $i=1,2$. Furthermore, assume that $\mathcal{F}_i$ is the completion of the $\sigma$-algebra generated by $X^i$.
For the processes $X^1, X^2$, we get the associated independent random measures $M^1$ and  $M^2$ like in \eqref{measureM},  and the  families of multiple stochastic integrals $$\left(I^1_n(f_n)\right)_{n\in\N}, \left(I^2_n(f_n)\right)_{n\in\N},$$ respectively. The  following assertion  is taken from \cite[Corollary 4.2]{Steinicke}, where it is formulated for L\'evy  processes with paths in $D[0,\infty[$.

\begin{theorem}[]
\label{kernel-unique}
Let $(E,\mathcal{E},\rho)$ be a $\sigma$-finite measure space and let $$C^1\in\Ltwo\left(\Omega_1\times E,\mathcal{F}_1\otimes\mathcal{E},\mathbb{P}_1\otimes\rho\right),$$ $$C^2\in\Ltwo\left(\Omega_2\times E,\mathcal{F}_2\otimes \mathcal{E},\mathbb{P}_2\otimes\rho\right)$$
and suppose that these random fields have chaos decompositions
\begin{equation*}
C^1=\sum_{n=0}^\infty I^1_n(f_n),\ \mathbb{P}_1\otimes\rho\text{-a.e.},\quad C^2=\sum_{n=0}^\infty I^2_n(g_n),\ \mathbb{P}_2\otimes\rho\text{-a.e.}
\end{equation*}
for $f_n,g_n$ being functions in $\Ltwo(E,\mathcal{E},\rho)\hat\otimes\Ltwo^{n}$ which are symmetric in the last $n$ variables, where '$\hat \otimes$' denotes the Hilbert space tensor product.

Assume that for $\rho$-almost all $e\in E$ there are functionals $$F_{e}\colon D\left({[0,T]}\right)\to\R$$
such that $C^i(e)=F_{e}\left((X^i_t)_{t \in [0,T]}\right)$, $\mathbb{P}_i$-a.s. for $i=1,2$. Then for all $n\in\N$ it holds $f_n=g_n$, $\rho\otimes\m^{\otimes n}$-a.e.
\end{theorem}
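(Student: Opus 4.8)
The plan is to reduce the statement to the uniqueness of the chaos decomposition on the common path space $D[0,T]$, exploiting that $X^1$ and $X^2$ share the same law. Since both correspond to the triplet $(\gamma,\sigma,\nu)$, their finite dimensional distributions coincide, hence they induce the same image measure $\PP_X := \PP_1\circ (X^1)^{-1} = \PP_2\circ (X^2)^{-1}$ on $(D[0,T],\mathcal{B}(D[0,T]))$. First I would record that, for $\rho$-almost every $e$, the functional $F_e$ lies in $\Ltwo(D[0,T],\PP_X)$: from $C^i\in\Ltwo(\Om_i\times E)$ Fubini gives $C^i(e)=F_e(X^i)\in\Ltwo(\PP_i)$ for $\rho$-a.e.\ $e$, and the change of variables $\PP_i\circ(X^i)^{-1}=\PP_X$ yields $\|F_e\|_{\Ltwo(\PP_X)}=\|F_e(X^i)\|_{\Ltwo(\PP_i)}<\infty$.

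The central step, and the main obstacle, is a transfer principle for the multiple integrals: for each $n$ and each kernel $h_n\in\Ltwo^n$ I want a single Borel functional $\Psi_n(h_n)\colon D[0,T]\to\R$, independent of $i$, with $I^i_n(h_n)=\Psi_n(h_n)(X^i)$, $\PP_i$-a.s.\ for $i=1,2$. For a simple kernel this is explicit: $I^i_n(h_n)$ is a polynomial in the values $M^i(B)$, and each $M^i(B)$ is an intrinsic functional of the path — the part on $[0,T]\times\R_0$ is read off from the jumps $\Delta X^i$, while the part on $[0,T]\times\{0\}$ is recovered from the continuous martingale component of $X^i$ — so $M^i(B)=\psi_B(X^i)$ for one and the same $\psi_B$, and hence $\Psi_n(h_n)$ exists for simple $h_n$. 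To pass to general $h_n$ I would approximate by simple kernels $h_n^{(k)}\to h_n$ in $\Ltwo^n$; the isometry gives $I^i_n(h_n^{(k)})\to I^i_n(h_n)$ in $\Ltwo(\PP_i)$, and because $X^i$ has law $\PP_X$ the identity $\|\Psi_n(h_n^{(k)})-\Psi_n(h_n^{(l)})\|_{\Ltwo(\PP_X)}=\|I^i_n(h_n^{(k)})-I^i_n(h_n^{(l)})\|_{\Ltwo(\PP_i)}$ shows $(\Psi_n(h_n^{(k)}))_k$ is Cauchy in $\Ltwo(\PP_X)$; its limit $\Psi_n(h_n)$ is independent of $i$ and satisfies $I^i_n(h_n)=\Psi_n(h_n)(X^i)$. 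Pulling the orthogonality of the $I^i_n$ through $X^i$ yields $\E[\Psi_n(h_n)\Psi_m(h_m)]=0$ for $n\neq m$ and $\E[\Psi_n(h_n)^2]=n!\|\tilde h_n\|_{\Ltwo^n}^2$, the expectations taken under $\PP_X$.

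With this in hand the conclusion follows. For $\rho$-a.e.\ $e$, applying $\Psi_n$ to the two decompositions and summing gives $\sum_n\Psi_n(f_n(e))(X^1)=C^1(e)=F_e(X^1)$, $\PP_1$-a.s., hence, as $X^1$ has law $\PP_X$, $\sum_n\Psi_n(f_n(e))=F_e$ in $\Ltwo(\PP_X)$; symmetrically $\sum_n\Psi_n(g_n(e))=F_e$ in $\Ltwo(\PP_X)$. Subtracting, the orthogonality and isometry just recorded force $\Psi_n(f_n(e))=\Psi_n(g_n(e))$, and therefore $f_n(e)=g_n(e)$ in $\Ltwo^n$, for every $n$ and $\rho$-a.e.\ $e$ (the tilde may be dropped since the kernels are already symmetric in their last $n$ variables). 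Finally I would upgrade this to the joint statement: since $f_n,g_n\in\Ltwo(\rho)\hat\otimes\Ltwo^n$ are jointly measurable, integrating $\|f_n(e)-g_n(e)\|_{\Ltwo^n}^2=0$ over $e$ gives $\|f_n-g_n\|_{\Ltwo(\rho)\hat\otimes\Ltwo^n}=0$, i.e.\ $f_n=g_n$, $\rho\otimes\m^{\otimes n}$-a.e., as claimed. The delicate points all sit in the transfer step: ensuring the path reconstruction of $M^i(B)$ is genuinely the same Borel map on both spaces, and that the $\Ltwo$-limits defining $\Psi_n(h_n)$ are well defined $\PP_X$-a.e.\ independently of $i$.
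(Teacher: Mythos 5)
The paper itself does not prove this statement; it is quoted verbatim from the cited reference (Steinicke, Corollary 4.2), and the argument there is precisely the transfer principle you describe: the multiple integrals with respect to $M^i$ are represented by intrinsic Borel functionals of the path, so the entire chaos machinery can be pushed forward to $(D[0,T],\mathcal{B}(D[0,T]),\mathbb{P}_X)$, where uniqueness and orthogonality of the decomposition force $f_n=g_n$. Your reconstruction is sound, including the final Fubini step upgrading the $\rho$-a.e.\ pointwise identity to equality $\rho\otimes\m^{\otimes n}$-a.e. The one step that deserves more care than your sketch gives it is the claim that $M^i(B)=\psi_B(X^i)$ for one and the same Borel map $\psi_B$: the compensated small-jump part and the Brownian component $\sigma W^i$ are recovered from the path only as a.s.\ (respectively $\mathrm{L}_2$) limits, so $\psi_B$ is a Borel function defined on a $\mathbb{P}_X$-full subset of $D[0,T]$ and must be fixed arbitrarily off that set; because both $X^1$ and $X^2$ induce the same law $\mathbb{P}_X$, this single choice then represents $M^i(B)$ simultaneously for $i=1,2$, which is exactly what your Cauchy-sequence construction of $\Psi_n(h_n)$ requires.
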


Roughly speaking, if we have the same functionals  $F_{e}$ acting on both L\'evy processes $X^i$ defined on the probability spaces $(\Omega_i,\mathcal{F}_i,\mathbb{P}_i)$ for $i=1,2$ then  the deterministic kernels of  their  chaos expansions coincide. \bigskip \\
The Factorization lemma (see, for instance, \cite[ Section II.11]{bauer}) implies that for any $\xi \in  \Ltwo$ there exists a measurable functional $g_\xi \colon D([0,T]) \to \R$ such that
 \[ \xi(\omega) =g_\xi\left(\left(X_t(\omega)\right)_{0\leq t\leq T}\right) =g_\xi(X(\omega))\] for a.a. $\omega \in \Omega.$

  The following characterization that $g_\xi(X) \in  \mathbb{D}_{1,2} ^{\R_0}$ is a consequence from  Al\` os, Le\'on and Vives \cite[Corollary 2.3. and Lemma  2.1]{ Alos}  (this results hold true for a general L\'evy measure since  the  square integrability of the L\'evy process stated at the beginning of \cite{ Alos}  is in fact only used from   \cite[Section 2.4]{ Alos} on)  and Theorem \ref{kernel-unique}.  For details see the proof in  \cite[Theorem 5.1]{Steinicke}.

\begin{lemma} \label{functionallem}
If  $g_\xi(X) \in    \Ltwo$
then $g_\xi(X) \in  \mathbb{D}_{1,2} ^{\R_0}$  $\iff$
\begin{equation}\label{fctder}
 g_\xi(X+x\one_{[t,T]})-g_\xi(X) \in \Ltwo(\PP \otimes\m)
\end{equation}
and it holds then
   for    $x\neq0$  $\PP \otimes\m$-a.e.
\begin{equation}\label{xieq}
\D_{t,x} \xi=  g_\xi(X+x\one_{[t,T]})-g_\xi(X) .
\end{equation}
\end{lemma}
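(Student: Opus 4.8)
The plan is to reduce the statement to the canonical L\'evy space, where the increment (``adding a jump'') representation of the jump part of the Malliavin derivative is available from Al\`os, Le\'on and Vives, and then to transport both the characterization and the identity \eqref{xieq} to the arbitrary space $(\Om,\ftn,\PP)$ by the kernel-uniqueness result of Theorem \ref{kernel-unique}. Let $\Om^{\mathrm{can}}$ be a canonical L\'evy space carrying a process $X^{\mathrm{can}}$ with the same triplet $(\gamma,\sigma,\nu)$, and set $\eta:=g_\xi(X^{\mathrm{can}})$, using the same functional $g_\xi$ furnished by the Factorization lemma. On $\Om^{\mathrm{can}}$ the translation $\tx\mapsto\tx+x\one_{[t,T]}$ is literally the operation of inserting an extra jump of size $x$ at time $t$ into the Poisson configuration, and the cited results of Al\`os et al.\ give there, for $x\neq0$, the equivalence $\eta\in\mathbb{D}_{1,2}^{\R_0}\iff g_\xi(X^{\mathrm{can}}+x\one_{[t,T]})-g_\xi(X^{\mathrm{can}})\in\Ltwo(\PP\otimes\m)$ together with the formula $\D_{t,x}\eta=g_\xi(X^{\mathrm{can}}+x\one_{[t,T]})-g_\xi(X^{\mathrm{can}})$.

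For the transfer I would introduce the parametrized functional $\Phi_{t,x}(\tx):=g_\xi(\tx+x\one_{[t,T]})-g_\xi(\tx)$ on $D[0,T]$ and the two increment fields $C(\om,t,x):=\Phi_{t,x}(X(\om))$ and $C^{\mathrm{can}}(\om,t,x):=\Phi_{t,x}(X^{\mathrm{can}}(\om))$, regarded over $E:=[0,T]\times\R_0$ with $\rho:=\m$. First, applying Theorem \ref{kernel-unique} to $\xi$ and $\eta$ (with $E$ a point and $g_\xi$ as the common functional) shows that they share the same chaos kernels $\tilde f_n$; since membership in $\mathbb{D}_{1,2}^{\R_0}$ is, by its definition, a condition on the $\tilde f_n$ alone, this already gives $\xi\in\mathbb{D}_{1,2}^{\R_0}\iff\eta\in\mathbb{D}_{1,2}^{\R_0}$. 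Second, because $X$ and $X^{\mathrm{can}}$ induce the same law $\PP_X$ on $D[0,T]$ and $C,C^{\mathrm{can}}$ are deterministic functionals of the respective path, one has $\E|C(\cdot,t,x)|^2=\int_{D[0,T]}|\Phi_{t,x}|^2\,d\PP_X=\E|C^{\mathrm{can}}(\cdot,t,x)|^2$ for every $(t,x)$, whence the two $\Ltwo(\PP\otimes\m)$-norms of $C$ and $C^{\mathrm{can}}$ coincide in $[0,\infty]$. Chaining these with the canonical-space equivalence yields \eqref{fctder}.

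It remains to identify the increment with the chaos derivative on the general space. Assuming now $\xi\in\mathbb{D}_{1,2}^{\R_0}$, the previous step makes both $C$ and $C^{\mathrm{can}}$ elements of $\Ltwo(\PP\otimes\m)$ with finite, equal norms, so Theorem \ref{kernel-unique} applies to the pair $(C^{\mathrm{can}},C)$ and shows they have the same chaos kernels. On $\Om^{\mathrm{can}}$ those kernels are, by the Al\`os et al.\ formula, exactly the kernels of $\D\eta$, i.e.\ the ones built from the $\tilde f_n$ that also define $\D\xi$ through the chaos expansion. Thus $C$ and $\D_{\cdot,\cdot}\xi$ are two elements of $\Ltwo(\PP\otimes\m)$ sharing all chaos kernels, hence equal $\PP\otimes\m$-a.e.; this is \eqref{xieq}.

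I would expect the main obstacle to be the measure-theoretic bookkeeping around the increment field rather than any deep idea: namely verifying the joint $(\om,t,x)$-measurability of $\Phi_{t,x}(X(\om))$, checking that the shifted path $\tx+x\one_{[t,T]}$ keeps $g_\xi$ well defined $\PP_X$-a.e.\ (which on $\Om^{\mathrm{can}}$ is guaranteed because the shift stays within the support of the Poisson configuration), and confirming that the translation operator used by Al\`os et al.\ for their random measure $\sigma\,dW_t\delta_0+x\tilde N$ matches the increment $g_\xi(\cdot+x\one_{[t,T]})$ under the equivalent normalization of $M$ adopted here. Once these points are settled, the two applications of Theorem \ref{kernel-unique} together with the path-law identity close the argument.
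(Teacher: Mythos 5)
Your proposal is correct and follows essentially the same route as the paper, which itself derives the lemma by combining the canonical-space increment representation of Al\`os, Le\'on and Vives with the transfer principle of Theorem \ref{kernel-unique} (referring to \cite[Theorem 5.1]{Steinicke} for the details you spell out). Your two applications of Theorem \ref{kernel-unique} -- first to match the chaos kernels of $\xi$ and its canonical copy, then to identify the increment field with the chaos derivative -- together with the path-law identity for the $\Ltwo(\PP\otimes\m)$-norms, reproduce exactly that argument, and you correctly flag the normalization issue (difference versus difference quotient) arising from the choice of $M$ in \eqref{measureM}.
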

Compared to the approach of \cite{suv2} which uses the random measure $\sigma dW_t\delta_0(dx) +x\tilde N(dt,dx)$, here the according Malliavin derivative for $x\neq 0$ and $M$ from \eqref{measureM} is just a {\it difference} instead of the  difference quotient from \cite{suv2}.

 Applied on  $g_\xi(X(\omega))$ this gives in the canonical space
    \equa
      g_\xi(X(\omega_{r,x}))  -g_\xi(X(\omega)) =   g_\xi(X(\om)+x\one_{[r,T]})-g_\xi(X(\om))
  \tion
  for $\PP \otimes\m$ a.e. $(\om,r,x).$

In the situation of the previous lemma, one may ask whether properties of $g_\xi(X)$ that hold $\PP$-a.s. are preserved $\PP\otimes\m$-a.e. for
$g_\xi(X+x\one_{[t,T]})$. The positive answer is given  by the following result (the proof can be found in the appendix).
\begin{lemma}\label{sprungaddthm}
Let $\Lambda\in \mathcal{G}_T$ be a set with  $\mathbb{P}\left(\left\{X\in \Lambda\right\}\right)=0$. Then
$$\mathbb{P} \otimes\m\left(\left\{(\omega,r,v)\in \Omega\times{[0,T]}\times\R_0:X(\omega)+v\one_{[r,T]}\in \Lambda\right\}\right)=0.$$
\end{lemma}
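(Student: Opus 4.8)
The plan is to realise the shift $\tx\mapsto\tx+v\one_{[r,T]}$ as the operation of adding a single atom to the jump measure of $X$, and then to invoke the Mecke (Palm) formula for Poisson random measures. First I would reduce to a Borel set. Since $\mathcal{G}_T$ is exactly the $\mathbb{P}_X$-completion of $\mathcal{B}(D{[0,T]})$ and $\mathbb{P}_X(\Lambda)=\mathbb{P}(\{X\in\Lambda\})=0$, there is a Borel set $\Lambda_0\supseteq\Lambda$ with $\mathbb{P}_X(\Lambda_0)=0$; denoting by $A$ (resp.\ $A_0$) the set in the statement built from $\Lambda$ (resp.\ $\Lambda_0$), we have $A\subseteq A_0$, so it suffices to show $\mathbb{P}\otimes\m(A_0)=0$. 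Using the L\'evy--It\^o decomposition \eqref{LevyIto} I would write $X=\Phi(W,N)$ for a measurable reconstruction map $\Phi$, where $W$ is the Brownian part and $N$ the Poisson random measure of jumps; these are independent and $N$ has intensity $\lambda\otimes\nu$ on ${[0,T]}\times\R_0$. The key deterministic observation is that adding one atom $\delta_{(r,v)}$ with $v\in\R_0$ to $N$ changes the path by exactly $v\one_{[r,T]}$, because the compensator of the small jumps is a deterministic drift that is unaffected by a single atom; hence $X+v\one_{[r,T]}=\Phi(W,N+\delta_{(r,v)})$, $\mathbb{P}$-a.s., for all $(r,v)\in{[0,T]}\times\R_0$.

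Since on $\R_0$ the measure $\m$ coincides with $\lambda\otimes\nu$, Fubini gives
\[
\mathbb{P}\otimes\m(A_0)=\int_0^T\!\!\int_{\R_0}\mathbb{P}\big(\Phi(W,N+\delta_{(r,v)})\in\Lambda_0\big)\,\nu(dv)\,dr.
\]
To keep the intensity finite I would truncate: fix $\vare>0$ and restrict to $\{|v|>\vare\}$, so that $N_\vare:=N|_{{[0,T]}\times\{|v|>\vare\}}$ has finite (Poisson) total mass. Conditioning on $W$ and on the independent remainder $N|_{\{|v|\le\vare\}}$, the Mecke formula applied to $N_\vare$ with the test functional $\one_{\Lambda_0}\circ\Phi$ yields
\[
\int_0^T\!\!\int_{\{|v|>\vare\}}\mathbb{P}\big(\Phi(W,N+\delta_{(r,v)})\in\Lambda_0\big)\,\nu(dv)\,dr=\E\big[\one_{\Lambda_0}(X)\,N({[0,T]}\times\{|v|>\vare\})\big].
\]
Because $\mathbb{P}_X(\Lambda_0)=0$ we have $\one_{\Lambda_0}(X)=0$ $\mathbb{P}$-a.s., and since $N({[0,T]}\times\{|v|>\vare\})$ is a.s.\ finite the right-hand side is genuinely $0$ (no $0\cdot\infty$ ambiguity). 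Hence $\mathbb{P}\otimes\m(A_0\cap\{|v|>\vare\})=0$ for every $\vare>0$, and letting $\vare\downarrow0$ and using continuity of measure from below (the sets increase to $A_0$ as $v$ ranges over $\R_0$) gives $\mathbb{P}\otimes\m(A_0)=0$, whence $\mathbb{P}\otimes\m(A)\le\mathbb{P}\otimes\m(A_0)=0$.

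The main obstacle I anticipate is making the two structural facts fully rigorous: (i) the joint measurability of $(\om,r,v)\mapsto X(\om)+v\one_{[r,T]}$ as a map into the Skorohod space and of the reconstruction $\Phi$, so that all the expectations above are well defined; and (ii) the application of the Mecke formula together with the independence splitting and the conditioning on $(W,N|_{\{|v|\le\vare\}})$, in particular checking that adding an atom commutes with $\Phi$ and leaves the compensator unchanged. The truncation in $\vare$ is the device that converts the a.s.-infinite total jump count (for infinite $\nu$) into the a.s.-finite count $N({[0,T]}\times\{|v|>\vare\})$, which is what makes the right-hand side vanish cleanly. As an alternative that avoids citing Mecke, one can first treat the compound-Poisson case ($\nu$ finite) by a direct conditioning computation on the number and marks of the jumps, which reproduces the identity $\E\int g(N+\delta_{(r,v)})\,\nu(dv)\,dr=\E[g(N)\,N({[0,T]}\times\R_0)]$ in that setting, and then pass to general $\nu$ by the same truncation.
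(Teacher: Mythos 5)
Your argument is correct, and it reaches the conclusion by a genuinely different (though spiritually related) route. The paper never invokes the Mecke/Palm formula: after factoring out the continuous part via the convolution identity $\mathbb{P}(X\in\Lambda)=\int\mathbb{P}_J(\Lambda-h)\,\mathbb{P}_B(dh)$, it treats the compound Poisson case by conditioning on the number of jumps and using the order-statistics representation (given $k$ jumps, the jump times are i.i.d.\ uniform and the marks i.i.d.\ $\nu/\nu(\R_0)$), so that adding one independent $\frac{\lambda\otimes\nu}{T\nu(\R_0)}$-distributed atom turns the conditional law given $k$ jumps into the conditional law given $k+1$ jumps; the general case follows by partitioning $\R_0$ into slices $S_p$ of finite $\nu$-mass. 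That computation is precisely a bare-hands proof of the finite-intensity Mecke identity, so your proof replaces the paper's combinatorial core by a citation to a standard tool, and your $\vare$-truncation plays the role of the paper's partition $\{S_p\}$ (both convert the a.s.\ infinite jump count into a finite one so that the right-hand side $\E[\one_{\Lambda_0}(X)\,N(\cdot)]$ vanishes without ambiguity). What your route buys is brevity and generality; what it costs is the structural step you correctly flag as the main obstacle, namely that adding the atom $\delta_{(r,v)}$ to $N$ shifts the reconstructed path by exactly $v\one_{[r,T]}$ --- this is delicate because the small-jump part of $\Phi$ is only an $\mathrm{L}_2$/a.s.\ limit, not a pathwise functional of the configuration. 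Note, however, that your own truncation already resolves this: for $|v|>\vare$ you can write $X$ as a pathwise finite sum over the jumps in $\{|x|>\vare\}$ plus a functional of $(W,N|_{\{|x|\le\vare\}})$ and a deterministic drift, and adding the atom affects only the finite sum, exactly by $v\one_{[r,T]}$; so conditionally on $(W,N|_{\{|x|\le\vare\}})$ the identity is pathwise exact and Mecke applies cleanly to $N_\vare$. The paper sidesteps this issue entirely by never introducing a reconstruction map and instead manipulating conditional laws of the jump configuration, which is why its proof, though longer, needs no discussion of versions of $\Phi$.
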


\bigskip
\begin{corollary}  {\color{white}.} \label{kept-almost-sure}
\begin{enumerate}[(i)]
\item  \label{corA}
Let $f\colon D{[0,T]}\times\R\to\R$ be a measurable mapping such that $\PP$-a.s. $y\mapsto f(X(\omega),y)$ is a Lipschitz function with Lipschitz constant $L$ independent from $\omega\in\Omega$. Then the set
$$\Lambda:=\left\{ {\tt x}\in D{[0,T]}: y\mapsto f({\tt x},y) \text{ is not Lipschitz in } y \text{ with constant }L \right\}$$
satisfies $\PP(X\in\Lambda)=0$. Lemma \ref{sprungaddthm} implies that also $$y\mapsto f(X(\omega)+v\one_{[r,T]},y)$$ is a Lipschitz function with constant $L$ for $\PP\otimes\m$-a.e. $(\omega,r,v)\in\Omega\times{[0,T]}\times\R_0$.
\item
Let $\xi=g_\xi(X)\in L_{\infty}(\Omega)$. By the same reasoning as in \eqref{corA}, it follows from Lemma \ref{sprungaddthm} that $\PP\otimes\m$-a.e. the random element $g_\xi(X+v\one_{[r,T]})$ is bounded.
\end{enumerate}
\end{corollary}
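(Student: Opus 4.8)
The plan is to treat both parts by the same device: for each I would produce a set $\Lambda\subseteq D{[0,T]}$ lying in $\mathcal{G}_T$ with $\PP(X\in\Lambda)=0$ that collects the ``bad'' trajectories, and then let Lemma \ref{sprungaddthm} propagate this null property to the shifted paths $X(\omega)+v\one_{[r,T]}$.

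I would settle part (ii) first, as it involves no measurability difficulty. Choosing $C$ with $|g_\xi(X)|\le C$ $\PP$-a.s.\ and setting
\[
\Lambda_C:=\left\{\tx\in D{[0,T]}:|g_\xi(\tx)|>C\right\},
\]
measurability of $g_\xi$ gives $\Lambda_C\in\mathcal{B}(D{[0,T]})\subseteq\mathcal{G}_T$ and $\PP(X\in\Lambda_C)=\PP(|g_\xi(X)|>C)=0$. Lemma \ref{sprungaddthm} then yields $X(\omega)+v\one_{[r,T]}\notin\Lambda_C$, i.e.\ $|g_\xi(X+v\one_{[r,T]})|\le C$, for $\PP\otimes\m$-a.e.\ $(\omega,r,v)$, which is exactly the asserted boundedness.

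For part (i) the same template applies, but the defining condition of $\Lambda$ is an uncountable statement over $y$ and so must first be rendered measurable. I would pass to the countable surrogate
\[
\tilde\Lambda:=\bigcup_{q,q'\in\Q}\left\{\tx\in D{[0,T]}:|f(\tx,q)-f(\tx,q')|>L|q-q'|\right\},
\]
which belongs to $\mathcal{B}(D{[0,T]})$ because each $f(\cdot,q)$ is measurable. Since $\tilde\Lambda\subseteq\Lambda$ and, by hypothesis, $\{X\in\Lambda\}$ is contained in a $\PP$-null set, we get $\PP(X\in\tilde\Lambda)=0$; Lemma \ref{sprungaddthm} applied to $\tilde\Lambda$ then makes $q\mapsto f(X(\omega)+v\one_{[r,T]},q)$ Lipschitz with constant $L$ on $\Q$ for $\PP\otimes\m$-a.e.\ $(\omega,r,v)$.

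The step I expect to be delicate is upgrading this rational Lipschitz property to the genuine one on all of $\R$, equivalently showing that the true set $\Lambda$ (not merely $\tilde\Lambda$) lies in $\mathcal{G}_T$ with $\PP_X(\Lambda)=0$. A map that is Lipschitz on $\Q$ is Lipschitz on $\R$ precisely when it is continuous, so $\Lambda\setminus\tilde\Lambda$ consists of trajectories $\tx$ at which $f(\tx,\cdot)$ jumps at an irrational point. When $y\mapsto f(\tx,y)$ is continuous for every $\tx$ — which is the case for the BSDE generators to which the corollary is applied — density of $\Q$ forces $\Lambda=\tilde\Lambda$, so $\Lambda$ is Borel and Lemma \ref{sprungaddthm} delivers the full Lipschitz conclusion at once. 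In the absence of such continuity one argues that on the full-measure event where $f(X(\omega),\cdot)$ is Lipschitz it agrees with the continuous extension $\bar f_{X(\omega)}$ of its restriction to $\Q$; since $\tx\mapsto\lim_{\Q\ni q\to y}f(\tx,q)$ is measurable for each fixed $y$, the residual set $\Lambda\setminus\tilde\Lambda$ is $\PP_X$-negligible, and completeness of $\mathcal{G}_T$ gives $\Lambda\in\mathcal{G}_T$ with $\PP_X(\Lambda)=0$. Making this last ``hidden uncountable condition'' rigorous is the main obstacle; everything else is a direct application of Lemma \ref{sprungaddthm}.
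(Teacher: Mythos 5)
Your overall template is exactly the paper's: the corollary is obtained by exhibiting a $\mathcal{G}_T$-measurable, $\PP_X$-null set of bad trajectories and then invoking Lemma \ref{sprungaddthm}. Part (ii) and the rational surrogate $\tilde\Lambda$ in part (i) are correct as written. The gap is in your final step for a general measurable $f$. From the hypothesis you only know that $X^{-1}(\Lambda)$ is contained in a $\PP$-null set; since $\ftn$ is the completion of $\sigma(X)$, this only yields $\Lambda\subseteq B_0\cup(\mathrm{range}\,X)^c$ for some Borel $\PP_X$-null $B_0$, i.e.\ it says nothing about $\Lambda$ off the range of $X$ --- and the shifted paths $X(\omega)+v\one_{[r,T]}$ are precisely the objects living off that range. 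Moreover $\Lambda\setminus\tilde\Lambda$ is an uncountable union over $y\in\R$ of the Borel null sets $\bigl\{\tx\notin\tilde\Lambda : f(\tx,y)\neq\lim_{\Q\ni q\to y}f(\tx,q)\bigr\}$, and an uncountable union of null sets need not be negligible; the measurability ``for each fixed $y$'' that you cite does not close this. So the assertion ``the residual set is $\PP_X$-negligible, and completeness of $\mathcal{G}_T$ gives $\Lambda\in\mathcal{G}_T$'' is exactly what remains to be proved.

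The repair is to use the structure of $\Lambda$ rather than only the hypothesis: $\Lambda=\{\tx:\exists\,(y,y')\in\R^2,\ |f(\tx,y)-f(\tx,y')|>L|y-y'|\}$ is the projection onto $D[0,T]$ of a Borel subset of $D[0,T]\times\R^2$, hence analytic and therefore universally measurable. Thus there are Borel sets $B_1\subseteq\Lambda\subseteq B_2$ with $\PP_X(B_2\setminus B_1)=0$; the hypothesis forces $\PP_X(B_1)=\PP(X\in B_1)=0$, so $\Lambda$ is contained in the Borel $\PP_X$-null set $B_2$, belongs to $\mathcal{G}_T$, and Lemma \ref{sprungaddthm} applies to it directly. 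Your alternative route --- assuming $y\mapsto f(\tx,y)$ continuous for \emph{every} $\tx$, so that $\Lambda=\tilde\Lambda$ --- is also valid and covers the paper's applications, but note that the standing assumption (${\A}_f$\ref{f-Lip}) gives continuity only $\PP$-a.s., so one must first redefine $f$ on a Borel $\PP_X$-null set of paths before that shortcut is available.
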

Note that the boundedness of $g_\xi(X+v\one_{[r,T]})$ implies boundedness of the difference in \eqref{xieq},
$$g_\xi(X+v\one_{[r,T]})-g_\xi(X),$$
which -- in case of $\Ltwo$-integrability w.r.t.~$\PP\otimes\m$ -- equals the Malliavin derivative for $v\neq 0$.
\bigskip

\subsection{Malliavin calculus for random functions}

 We want to address the following problem: Let
\[
   F: \Om \times \R^d \to \R
\]
be jointly measurable, for any $y \in \R^d$ we assume $F(\cdot, y) \in \DD,$ and  for a.a. $\om\in\Omega$
let $F(\om, \cdot)\in \mathcal{C}^1(\R^d).$ If $G_1,...,G_d \in \DD,$ under which assumption do we get
\[
  F(\cdot, G_1,...,G_d) \in \DD ?
\]

We will treat this question in two steps: First we will find conditions on  $F$  and  $G=(G_1,...,G_d)$  such that
  \begin{itemize}
  \item   $ F(\cdot, G)     \in   \mathbb{D}_{1,2}^{\R_0}   $
  \item $ F(\cdot, G) \in    \mathbb{D}_{1,2}^0 $
  \end{itemize}
  separately and then  use relation \eqref{D-intersection}. \bigskip
\subsubsection{The case \texorpdfstring{ $ F(\cdot, G) \in \mathbb{D}_{1,2}^{\R_0} $}{FinDR}}
\begin{lemma} \label{functionallem1}
 Assume that $F(\cdot, y) \in \mathbb{D}_{1,2}^{\R_0}\,$ for all $y \in \R^d,$   $F(\cdot, G)  \in  \Ltwo,$  and  $G_1,...,G_d \in \mathbb{D}_{1,2}^{\R_0}.$
 Let $F(\om, \cdot)\in \mathcal{C}(\R^d)$ $\PP$-a.s. and let $F$ be represented by the functional $g_F(X,\cdot)$. Then $F(\cdot, G)     \in   \mathbb{D}_{1,2}^{\R_0} $     $\iff$
\equal \label{jump-derivative}    ( \D_{t,x} F)(\cdot,   G) + \hspace{-1.5em} &&g_F(X + x\one_{[t,T]},   G+  \D_{t,x}G)   -g_F(X+ x\one_{[t,T]}, G) \non \\
&& \quad\quad\quad
 \in \Ltwo(\Om \times [0,T] \times  \R_0, \PP \otimes\m) .
\tionl
\end{lemma}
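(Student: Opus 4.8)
The plan is to apply Lemma \ref{functionallem} to the single random variable $\xi := F(\cdot, G) = g_F(X, G)$, which lies in $\Ltwo$ by hypothesis. A functional representing $\xi$ is the composition $g_\xi(\tx) := g_F(\tx, g_G(\tx))$, where $g_G = (g_{G_1}, \dotsc, g_{G_d})$ and $g_{G_i}$ represents $G_i$; indeed $g_\xi(X) = g_F(X, g_G(X)) = g_F(X, G) = \xi$. Since any two functionals representing $\xi$ agree $\PP_X$-a.s., Lemma \ref{sprungaddthm} guarantees that their shifted evaluations agree $\PP \otimes \m$-a.e., so this particular choice of $g_\xi$ is admissible in Lemma \ref{functionallem}. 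That lemma then reduces the membership $F(\cdot, G) \in \mathbb{D}_{1,2}^{\R_0}$ to square-integrability of the increment $g_\xi(X + x\one_{[t,T]}) - g_\xi(X)$ with respect to $\PP \otimes \m$, and identifies $\D_{t,x}\bigl(F(\cdot, G)\bigr)$ with that increment for $x \neq 0$. Thus the whole task is to show that this increment coincides $\PP \otimes \m$-a.e.\ with the expression appearing in \eqref{jump-derivative}.

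First I would compute the increment. As $g_\xi(\tx) = g_F(\tx, g_G(\tx))$ is a pointwise identity of functionals, evaluating at $\tx = X + x\one_{[t,T]}$ gives $g_\xi(X + x\one_{[t,T]}) = g_F(X + x\one_{[t,T]}, g_G(X + x\one_{[t,T]}))$. Because each $G_i \in \mathbb{D}_{1,2}^{\R_0}$, Lemma \ref{functionallem} applied to $G_i$ yields $g_{G_i}(X + x\one_{[t,T]}) = G_i + \D_{t,x} G_i$ for $\PP \otimes \m$-a.e.\ $(\omega, t, x)$ with $x \neq 0$; intersecting the $d$ full-measure sets gives $g_G(X + x\one_{[t,T]}) = G + \D_{t,x} G$ a.e., hence $g_\xi(X + x\one_{[t,T]}) = g_F(X + x\one_{[t,T]}, G + \D_{t,x} G)$. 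Since $g_\xi(X) = F(\cdot, G)$, adding and subtracting $g_F(X + x\one_{[t,T]}, G)$ splits the increment as
\[
\bigl[g_F(X + x\one_{[t,T]}, G) - F(\cdot, G)\bigr] + \bigl[g_F(X + x\one_{[t,T]}, G + \D_{t,x} G) - g_F(X + x\one_{[t,T]}, G)\bigr].
\]

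It remains to identify the first bracket with $(\D_{t,x} F)(\cdot, G)$, and this is where I expect the main difficulty. For each fixed $y \in \R^d$, Lemma \ref{functionallem} applied to $F(\cdot, y) \in \mathbb{D}_{1,2}^{\R_0}$ gives $(\D_{t,x} F)(\cdot, y) = g_F(X + x\one_{[t,T]}, y) - g_F(X, y)$ outside a $\PP \otimes \m$-null set that may depend on $y$, whereas we need the identity at the random argument $y = G(\omega)$. I would bridge this gap by a continuity-and-density argument: choosing a countable dense set $Q \subset \R^d$, the identity holds simultaneously for all $y \in Q$ off a single null set; the map $y \mapsto g_F(X, y)$ is continuous $\PP$-a.s.\ by hypothesis, and $y \mapsto g_F(X + x\one_{[t,T]}, y)$ inherits continuity $\PP \otimes \m$-a.e.\ via part (i) of Corollary \ref{kept-almost-sure} (that is, through Lemma \ref{sprungaddthm}). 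Consequently both sides are continuous in $y$ a.e., the identity extends from $Q$ to all of $\R^d$ and in particular to $y = G$; this simultaneously furnishes the jointly measurable, $y$-continuous version that makes $(\D_{t,x} F)(\cdot, G)$ meaningful.

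Putting the pieces together, the increment $g_\xi(X + x\one_{[t,T]}) - g_\xi(X)$ equals $(\D_{t,x} F)(\cdot, G) + g_F(X + x\one_{[t,T]}, G + \D_{t,x} G) - g_F(X + x\one_{[t,T]}, G)$ for $\PP \otimes \m$-a.e.\ $(\omega, t, x)$. By Lemma \ref{functionallem} this increment belongs to $\Ltwo(\PP \otimes \m)$ if and only if $F(\cdot, G) \in \mathbb{D}_{1,2}^{\R_0}$, which is precisely the asserted equivalence, and whenever it holds the displayed expression is a version of $\D_{t,x}\bigl(F(\cdot, G)\bigr)$.
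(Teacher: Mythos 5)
Your proof is correct and follows essentially the same route as the paper's: represent $F(\cdot,G)$ by the composed functional $g_F(\tx,g_G(\tx))$, identify its shift increment with the expression in \eqref{jump-derivative}, and read off the equivalence from Lemma \ref{functionallem}. The paper outsources the sufficiency direction to an external reference and only sketches the necessity computation, whereas you establish the single key identity explicitly (including the dense-set/continuity argument needed to evaluate $(\D_{t,x}F)(\cdot,y)$ at $y=G$, which matches how the paper itself interprets that term), so no genuinely different idea is involved.
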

\begin{proof} By the expression $( \D_{t,x} F)(\cdot, G)$ we mean that
we insert the $\Ltwo$-vector $(G_1,\dotsc,G_d)$ into the $y$-variable of $\D_{t,x}F(\cdot,y)$.
Furthermore, since by Lemma \ref{sprungaddthm}, expression  $g_F(X(\omega)+ x\one_{[t,T]}, y)$ is continuous in $y$ for $\PP\otimes\m$-a.e.
$(\omega,t,x)\in\Omega\times[0,T]\times\R_0$, taking equivalence classes of
\begin{align*} g_F(X(\omega)+ x\one_{[t,T]},y)\mid_{y=(G_1(\omega)+ \D_{t,x}G_1(\omega),\dotsc,G_d(\omega) +  \D_{t,x}G_d(\omega))} \end{align*}
for representatives $(G_1(\omega)+ \D_{t,x}G_1(\omega),\dotsc,G_d(\omega) +  \D_{t,x}G_d(\omega))$ leads to a well-defined
 $\mathrm{L}_0(\PP\otimes\m)$ object.

 For the sufficiency, one can use the same arguments as for   \cite[Theorem 5.2]{Steinicke}. There the proof is carried out only for
 $d=1$  but it is easy to see that the multidimensional case can be proved in the same way. \\
For the  necessity we consider $G_1, \dotsc, G_d$ as given by functionals $g_{G_1},$ $\dotsc,$ $g_{G_d}$ and conclude from Lemma \ref{functionallem} that
 \equa
 \D_{t,x} F(\cdot, y) = g_F(X+x\one_{[t,T]},y) - g_F(X,y).
 \tion
 Hence expression \eqref{jump-derivative} equals in fact
 \equa
 && \les g_F(X+x\one_{[t,T]},G_1+   \D_{t,x}G_1  ,...,G_d +   \D_{t,x}G_d) -
 g_F(X,G_1,...,G_d) \\
 &=&\! g_F(X+x\one_{[t,T]}, g_{G_1}\!(X+x\one_{[t,T]}),..., g_{G_d}\!(X+x\one_{[t,T]})) -
 g_F(X,G_1,...,G_d) \\
 &=& \!\D_{t,x}  F(X,G_1,...,G_d)
 \tion
 where we have used Lemma \ref{functionallem} again.
 \end{proof}

\subsubsection{ The case \texorpdfstring{ $F(\cdot, G) \in    \mathbb{D}_{1,2}^0$}{FinD}}
The L\'evy-It\^o decomposition implies that the Brownian part and the pure jump part of a L\'evy process are independent. Thus we may represent a copy of $X$ on the completion of  $(\Om^W\times\Om^J, \ftn^W\otimes\ftn^J, \PP^W\otimes\PP^J)$
as
\equa
   X_t(\om) = \gamma t + \sigma \om^{_W}_t  + J_t(\om^{_J}), \quad t \in [0,T],
\tion
where  $\om=(\om^{_W},  \om^{_J}).$
Here $(\Om^W, \ftn^W, \PP^W)$ denotes the completed canonical Wiener space i.e.  $\Om^W := \mathcal{C}_0[0,T]$ is the space of continuous functions starting in $0,$ and
$\F^W$ is the Borel $\sigma$-algebra completed with respect to the Wiener  measure $\PP^W.$ The space $(\Om^J, \ftn^J, \PP^J)$ is a probability space carrying the pure jump process $J$, where $\ftn^J$ is generated by $J$ and completed.\bigskip  \\
To work on the canonical space  $(\Om^W, \ftn^W, \PP^W)$ we continue with a short reminder on Gaussian Hilbert spaces and refer the reader for more information to Janson \cite{Janson}.
Consider the Gaussian Hilbert space $\h :=\big \{ \int_0^T h(s) dW_s: h \in \Ltwo[0,T] \big  \}.$ Because of It\^o's  isometry we may identify
$\h$ with $$\h_0:=\Ltwo[0,T].$$
The space
\[
\h_1 :=  \bigg \{ \int_0^\cdot h(s)ds: h \in \Ltwo[0,T] \bigg  \}
\]
with $\langle \int_0^\cdot h_1(s)ds,\int_0^\cdot h_2(s)ds\rangle_{\h_1} :=\! \int_0^T\! h_1(s) h_2(s)ds$ is the Cameron-Martin space.
For $h \in \h_0$ we have $g_h \in \h_1 $ with
\[  g_h(t):= \E \bigg (\int_0^T h(s) dW_s  \, W_t \bigg )= \int_0^t h(s)ds.
\]

The main idea to get sufficient conditions for $ F(\cdot,G) \in    \mathbb{D}_{1,2}^0$ consists in applying Theorem \ref{Sugita-thm} below.
We proceed with a collection of  definitions and some facts related to this theorem. \smallskip\\
In the sequel let $E$ be a separable Hilbert space.

\begin{definition} [\cite{Sugita}, \cite{Nualartbook}]
Let  $1 \le p < \infty $ and $\mathcal{S} \subseteq  \mathbb{D}_{1,p}(\PP^W) $ be a dense set of smooth random variables.
By $\mathbb{D}^W_{1,p}(E)$ we denote the completion of
\[
  \{ \xi= \sum_{k=1}^n G_k  H_k:   G_k \in  \mathcal{S}, H_k \in E   \}
\]
with respect to the norm
\[
   \| \xi\|_{1;E} := \left ( \E  \| \xi \|^p_E  + \E \left (\int_0^T \|\ D_t^W \xi  \|^2_E dt \right )^\frac{p}{2}  \right )^\frac{1}{p}
\]
where $ D_t^W \xi:=\sum_{k=1}^n (D_t^W G_k ) H_k.$
\end{definition}

Note that $\Ltwo(\Om^J, \ftn^J, \PP^J)$ is a separable Hilbert space, and that the space $\mathbb{D}^W_{1,2}(E)$
for $E:= \Ltwo(\Om^J, \ftn^J, \PP^J)$ can be identified with $\mathbb{D}_{1,2}^0$ defined in \eqref{D-0} (see  \cite{Alos}). This means we may
reformulate the question posed in the beginning of this section by asking for sufficient conditions such that
\[
F(\cdot, G) \in  \mathbb{D}^W_{1,2}(E).
\]
The answer will be  Theorem  \ref{Mall-diff-thm} at the end of this section. \\ \bigskip

Let $E_1$ and $E_2$ be separable Hilbert spaces. A bounded linear operator $A: E_1 \to E_2$ is is called
{\it Hilbert-Schmidt operator} if for some orthonormal basis $\{e_n\}$ in $E_1$ it holds
\[
    \|A\|_{HS(E_1,E_2)} := \bigg ( \sum_{n=1}^\infty \|Ae_n\|^2_{E_2} \bigg )^\half<\infty
\]
(see, for example, \cite{Bogachev}).
We will denote by $HS(\h_0,E)$ the space of  Hilbert-Schmidt operators between $\h_0$ and $E.$

\begin{definition} [\cite{Janson},\cite{Bogachev}]
With $\mathrm{L}_0(\PP^W; E)$ we denote the space of $E$-valued random variables, equipped with the topology
of convergence in probability.

  For   $\xi \in \mathrm{L}_0(\PP^W; E)$  and $h \in \h_0$ we define the {\it Cameron-Martin shift }  by
  \[\rho_h (\xi)(\om^{_W}) :=   \xi(\om^{_W} +g_h).\]
\end{definition}

One of the properties of the Cameron-Martin shift is the Cameron-Martin formula.  (For an integral of $E$-valued objects, we always use the Bochner integral.)
\begin{lemma} {\color{white}.} \label{cameron-martin}
\begin{enumerate}[(i)]
  \item (Cameron-Martin formula).    $\PP^W \sim  \PP^W \circ \rho_h^{-1} $  for  $h \in \h_0,$
  and the Radon-Nikodym derivative is given by
  \[ \frac{d \PP^W \circ \rho_h^{-1}}{d\PP^W }(\om^{_W}) = \exp \bigg\{-\half  \int_0^T h(t)^2 dt - \int_0^T h(t) dW_t \bigg \}.\]
  \item \label{ii} If $K \in  L_p(\PP^W; E)$ for some $p>1$ then for any $q \in {[1,p[}$
 \[ \bigg \| \int_0^T \rho_{sh} K ds \bigg \|_{L_q(\PP^W;E)} \le \int_0^T \exp \bigg \{ \frac{s^2}{2(p-q)} \|h\|^2_{\h_0} \bigg \} ds \,\, \|  K\|_{L_p(\PP^W;E)}.
  \]
  \item \label{iii} For $p\in{]0,\infty]}$, every $\xi \in \mathrm{L}_p(\PP^W)$ and for all $q\in{[0,p[}$, the map $$\h_0 \to  
\mathrm{L}_q(\PP^W): h \mapsto \rho_h(\xi)$$ is continuous. If $p=q=0$, continuity also holds.
  
 \end{enumerate}
\end{lemma}

\begin{proof}
(i)
 See Kuo \cite[Theorem 1.1]{Kuo}.  \\
(ii) Analogously to the proof of  Theorem 14.1 (vi) in Janson \cite{Janson}  for $1\le q<p$ we choose $r=\frac{p}{p-q}$  so that $\frac{1}{r}+\frac{q}{p}=1,$
  and by the Cameron-Martin formula and H\"olders inequality we get
 \equa
&& \bigg \| \int_0^T \rho_{sh} K ds \bigg \|_{L_q(\PP^W;E)} \\
 &\le& \int_0^T  \big (\E   \| \rho_{sh} K\|^q_E \big)^\frac{1}{q}   ds  \\
 &=& \int_0^T \bigg ( \E \exp \bigg \{s \int_0^T h(t)dW_t - \frac{s^2}{2} \|h\|^2_{\h_0} \bigg \} \| K\|^q_E  \bigg )^\frac{1}{q} ds \\
 &\le& \|  K\|_{L_p(\PP^W;E)}     \int_0^T \bigg ( \E \exp \bigg \{sr \int_0^T h(t)dW_t - \frac{s^2r}{2} \|h\|^2_{\h_0} \bigg \}  \bigg )^\frac{1}{rq} ds \\
 &=& \|  K\|_{L_p(\PP^W;E)}  \int_0^T \bigg (\exp \bigg \{   \frac{s^2(r^2-r)}{2} \|h\|^2_{\h_0} \bigg \}  \bigg )^\frac{1}{rq} ds \\
  &=& \|  K\|_{L_p(\PP^W;E)}   \int_0^T \exp \bigg \{ \frac{s^2}{2(p-q)} \|h\|^2_{\h_0} \bigg \} ds. \\
 \tion

(iii) This assertion is formulated for  real valued random variables in \cite[Theorem 14.1 (viii)]{Janson} but in  \cite[Remark 14.6]{Janson} it is stated
that it holds for random variables with values in a separable Banach space.
\end{proof}

\begin{definition} [\cite{Janson},\cite{Bogachev}]
\begin{enumerate}[(i)]
\item A random variable $\xi \in  \mathrm{L}_0(\PP^W; E)$  is {\it absolutely continuous along  $h \in \h_0 $} ($h$-a.c.)  if there exists a  random
variable $ \xi^h \in  \mathrm{L}_0(\PP^W; E)$     such that
$ \xi^h = \xi \,\,\, a.s.$ and for all $\om^{_W\!\!}  \in \Om^W$ the map  \[ u \mapsto \xi^h(\om^{_W\!\!}  + u \, g_h)  \]
 is absolutely continuous  on bounded intervals of $\R.$
 \item $\xi \in \mathrm{L}_0(\PP^W; E)$ is {\it ray absolutely continuous} (r.a.c.) if $\xi$ is $h$-a.c. for every $h \in \h_0.$
\item For  $\xi \in \mathrm{L}_0(\PP^W; E)$  and $h \in \h_0$ we say the {\it directional derivative  $\partial_h \xi \in \mathrm{L}_0(\Om^W; E)$ exists } if
\equa
\frac{ \rho_{u h} (\xi) - \xi}{u} \to^{\hspace*{-0.7em}\PP^W } \,    \partial_h \xi,  \quad u \to 0.
\tion
\item $\xi \in \mathrm{L}_0(\PP^W; E)$ is called  {\it stochastically G\^ateaux differentiable} (s.G.d.)    if  $\partial_h \xi$ exists
for every $h \in \h_0$ and there exists an  $HS(\h_0,E)$-valued random variable denoted by $\tilde \D \xi$ such that for every $h \in \h_0$
\equa
        \partial_h \xi=   \langle \tilde \D \xi ,h\rangle_{\h_0}, \quad   \PP^W\text{-} a.s.
\tion
\end{enumerate}
\end{definition}

According to Sugita \cite{Sugita},  the Malliavin Sobolev spaces $\mathbb{D}^W_{n,p}(E)$  for $n \in \N, 1<p< \infty$  and the  Kusuoka-Stroock Sobolev spaces
defined via the properties r.a.c. and s.G.d.~coincide. According to  Bogachev  \cite{Bogachev} this holds also for $p=1.$  Here we only use the assertion for $n=1$:

\begin{theorem} [  {\cite[Theorem 3.1]{Sugita}, \cite[Proposition 5.4.6 (iii)]{Bogachev}} ]
\label{Sugita-thm}
 Let $p \in [1, \infty[.$ Then
\equa
\mathbb{D}^W_{1,p}(E) \!= \!\{\xi \in \mathrm{L}_p(\PP^W\!; E)\colon \! \xi \text{ is  r.a.c., s.G.d. and } \tilde\D \xi \in \mathrm{L}_p(\PP^W\!\!; H\!S(\h_0;\!E)) \},
\tion
and  for  $\xi \in  \mathbb{D}^W_{1,p}(E) $  it holds  $\D^W \xi = \tilde \D \xi$ a.s.
\end{theorem}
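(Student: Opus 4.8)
The assertion consists of a two-sided inclusion together with the identification $\D^W\xi=\tilde\D\xi$. The plan is to prove the inclusion ``$\subseteq$'' by approximating elements of $\mathbb{D}^W_{1,p}(E)$ with smooth cylindrical variables and showing that ray absolute continuity and stochastic G\^ateaux differentiability survive $\|\cdot\|_{1;E}$-limits, and to prove the reverse inclusion ``$\supseteq$'' by a finite-dimensional reduction followed by mollification. The equality of the derivatives will then fall out of the approximating sequences used in either direction.

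For ``$\subseteq$'' I would start from a smooth cylindrical $\xi=\sum_{k=1}^n G_kH_k$ with $G_k\in\mathcal{S}$, $H_k\in E$. For such $\xi$ the map $u\mapsto\xi(\om^W+u\,g_h)$ is smooth, so $\xi$ is r.a.c., the directional derivative $\partial_h\xi=\langle\D^W\xi,h\rangle_{\h_0}$ exists, and $\xi$ lies in the Kusuoka--Stroock class with $\tilde\D\xi=\D^W\xi$. Given a general $\xi\in\mathbb{D}^W_{1,p}(E)$ I pick smooth cylindrical $\xi_n\to\xi$ in $\|\cdot\|_{1;E}$; then $\xi_n\to\xi$ in $\mathrm{L}_p(\PP^W;E)$ and $\D^W\xi_n\to\D^W\xi$ in $\mathrm{L}_p(\PP^W;HS(\h_0;E))$, where $\D^W\xi$ is read as the operator $h\mapsto\langle\D^W\xi,h\rangle_{\h_0}$. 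The Cameron--Martin formula (Lemma \ref{cameron-martin}(i)) yields, for each $h\in\h_0$ and $\eta\in E$, the integration-by-parts identity $\E\langle\partial_h\xi_n,\eta\rangle_E=\E\big[\langle\xi_n,\eta\rangle_E\int_0^T h\,dW\big]$; passing to the limit (via the $\mathrm{L}_p$-convergence and the continuity of the shift from Lemma \ref{cameron-martin}(iii)) shows that the directional-derivative operator is closable and sends $\xi$ to $\langle\D^W\xi,h\rangle_{\h_0}$. Writing $\xi_n(\om^W+u\,g_h)-\xi_n(\om^W)=\int_0^u\partial_h\xi_n(\om^W+s\,g_h)\,ds$ and letting $n\to\infty$ with the help of Lemma \ref{cameron-martin}(ii) transfers absolute continuity along $h$ to $\xi$, so $\xi$ is r.a.c., s.G.d., with $\tilde\D\xi=\D^W\xi$.

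For ``$\supseteq$'' I fix an orthonormal basis $\{e_m\}$ of $\h_0=\Ltwo[0,T]$, set $\mathcal{F}_N:=\sigma\big(\int_0^T e_1\,dW,\dotsc,\int_0^T e_N\,dW\big)$ and put $\xi_N:=\E[\xi\mid\mathcal{F}_N]$, an $E$-valued function of the $N$ Gaussian coordinates $\int_0^T e_m\,dW$. The first step is the commutation relation $\tilde\D\xi_N=\pi_N\,\E[\tilde\D\xi\mid\mathcal{F}_N]$, where $\pi_N$ projects $\h_0$ onto $\mathrm{span}\{e_1,\dotsc,e_N\}$; this follows from the Cameron--Martin integration-by-parts identity applied in each direction $e_1,\dotsc,e_N$. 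By martingale convergence $\xi_N\to\xi$ in $\mathrm{L}_p(\PP^W;E)$ and $\tilde\D\xi_N\to\tilde\D\xi$ in $\mathrm{L}_p(\PP^W;HS(\h_0;E))$, so it remains to show each $\xi_N\in\mathbb{D}^W_{1,p}(E)$ with $\D^W\xi_N=\tilde\D\xi_N$. For fixed $N$ the properties r.a.c., s.G.d.\ together with $\tilde\D\xi_N\in\mathrm{L}_p$ say precisely that the representing map $f_N\colon\R^N\to E$ lies in the Gaussian Sobolev space $W^{1,p}(\R^N,\gamma_N;E)$, and a Meyer--Serrin type mollification (convolution of $f_N$ against a smooth compactly supported kernel, with truncation) yields smooth cylindrical $E$-valued approximants converging to $\xi_N$ in $\|\cdot\|_{1;E}$. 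Hence $\xi_N\in\mathbb{D}^W_{1,p}(E)$, and $N\to\infty$ gives $\xi\in\mathbb{D}^W_{1,p}(E)$ with $\D^W\xi=\tilde\D\xi$.

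The main obstacle is the inclusion ``$\supseteq$'', and within it two points where the infinite-dimensional Gaussian structure must be handled with care. First, the commutation relation $\tilde\D\,\E[\,\cdot\mid\mathcal{F}_N]=\pi_N\,\E[\tilde\D\,\cdot\mid\mathcal{F}_N]$ must be derived from the Cameron--Martin formula alone, without presupposing the stronger Sobolev regularity one is trying to establish. Second, the $E$-valued mollification in finite dimensions must be arranged so that the functions \emph{and} their gradients converge in the weighted $\mathrm{L}_p(\gamma_N)$ norms; here the uniform $\mathrm{L}_p$-control of Cameron--Martin-shifted (hence of mollified) functions supplied by Lemma \ref{cameron-martin}(ii) is exactly the tool that legitimizes passing to the limit. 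I expect this integrability bookkeeping, rather than any single conceptual step, to be the real work, with the endpoint case $p=1$ (Bogachev's extension) the most delicate, since the weak-compactness arguments available for $p>1$ are no longer at hand.
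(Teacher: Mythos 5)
The paper does not prove this statement at all: Theorem \ref{Sugita-thm} is imported verbatim from \cite[Theorem 3.1]{Sugita} (for $p>1$) and \cite[Proposition 5.4.6 (iii)]{Bogachev} (covering $p=1$), so there is no in-paper argument to compare yours against. Judged on its own, your sketch is a reasonable outline of the known strategy, and it is closer in spirit to Bogachev's finite-dimensional-conditioning treatment than to Sugita's original proof: Sugita establishes the hard inclusion ``$\supseteq$'' (for $p>1$) by running the Ornstein--Uhlenbeck semigroup $T_t$ over a functional in the Kusuoka--Stroock class, showing $T_t\xi\in\mathbb{D}^W_{1,p}(E)$ with $\D^W T_t\xi=e^{-t}T_t\tilde\D\xi$, and letting $t\downarrow 0$ using hypercontractivity and weak compactness in the reflexive range --- which is exactly why his argument does not reach $p=1$ and why Bogachev must argue differently there.

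Two steps in your sketch are genuinely underpowered as written. First, in ``$\subseteq$'' the identity $\E\langle\partial_h\xi_n,\eta\rangle_E=\E\bigl[\langle\xi_n,\eta\rangle_E\int_0^T h\,dW\bigr]$ tests only against constants in $\Omega^W$ and cannot by itself give closability of the directional derivative; the standard argument needs the duality relation tested against a dense class of smooth random variables $\psi$, namely $\E[(\partial_h\xi_n)\psi]=\E[\xi_n(\psi\int_0^T h\,dW-\partial_h\psi)]$. (The transfer of the ray-a.c.\ identity to the limit, which is the part you actually need, is fine once you invoke Theorem \ref{Janson-thm} to convert the a.s.\ integral identity back into $h$-absolute continuity of a suitable modification.) Second, the commutation relation $\tilde\D\,\E[\,\cdot\mid\mathcal{F}_N]=\pi_N\,\E[\tilde\D\,\cdot\mid\mathcal{F}_N]$ is asserted to ``follow from the Cameron--Martin integration-by-parts identity,'' but starting only from r.a.c.\ and s.G.d.\ (convergence in probability of difference quotients) you have no domination allowing you to interchange the directional derivative with the integral over the complementary coordinates; making this rigorous --- via the ray-a.c.\ integral representation, the $\mathrm{L}_p$ bound on $\tilde\D\xi$ and Lemma \ref{cameron-martin}(ii) --- together with the identification of the completion-type definition of $\mathbb{D}^W_{1,p}(E)$ with the weak-derivative Gaussian Sobolev space in finite dimensions, is precisely where the cited works spend their effort. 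So your proposal is a credible roadmap rather than a proof, and for the purposes of this paper the statement should remain a quoted result.
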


We will also need the following result.

\begin{theorem} \label{Janson-thm}
For  $h \in \h_0$ and $\xi \in \mathrm{L}_0(\Om^W; E)$ it holds
\equa
\begin{array}{l}
\xi \text{ is }  \\
h\text{-a.c}
\end{array}
\!\!\!\!\!\iff \!\!\! \left \{ \begin{array}{l}
(i) \,\, \partial_h \xi \text{ exists} \\
(ii)  \forall u \in \R\colon\!  \rho_{uh}\xi(\om^{_W\!})\!-\xi(\om^{_W\!})  = \int_0^u \! \rho_{sh} (\partial_h \xi)(\om^{_W\!}) ds \\
\quad \,\,\PP^W\text{\!-a.s.,} \\
\quad\, \text{ where } \int_{-|u|}^{|u|}  \|\rho_{sh} (\partial_h \xi)(\om^{_W\!})\|_E \, ds < \infty \,\, \PP^W\text{-a.s.} \\
\quad \,\text{ and } (s, \om^{_W\!}) \mapsto  \rho_{sh} (\partial_h \xi)(\om^{_W\!}) \text{ denotes a jointly } \\
\quad\,    \text{measurable version.}
\end{array} \right.
\tion
\end{theorem}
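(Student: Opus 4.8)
The plan is to reduce the statement to the classical fundamental theorem of calculus (FTC) for absolutely continuous $E$-valued functions along the single Cameron--Martin direction $g_h$, and to use the quasi-invariance of $\PP^W$ to pass between statements that hold ``for Lebesgue-a.e.\ shift parameter'' and statements needed ``at the fixed parameter $0$'' or ``in probability''. Three ingredients will be used repeatedly: (a) since $E$ is a separable Hilbert space it has the Radon--Nikodym property, so an $E$-valued function on $\R$ that is absolutely continuous on bounded intervals is differentiable Lebesgue-a.e., its derivative is locally Bochner integrable, and it is recovered as the integral of its derivative; moreover a locally Bochner integrable function has almost every point as a Lebesgue point; (b) for fixed $u$ the shift $\om^{_W}\mapsto\om^{_W}+u\,g_h$ maps $\PP^W$-null sets to $\PP^W$-null sets (Cameron--Martin formula, Lemma \ref{cameron-martin}(i)); (c) $h\mapsto\rho_h(\xi)$ is continuous into $\mathrm{L}_0(\PP^W;E)$ (Lemma \ref{cameron-martin}(iii)). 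Throughout I write $\Psi(s,\om^{_W}):=\frac{d}{dr}\xi^h(\om^{_W}+r\,g_h)\big|_{r=s}$ where it exists, and I record the elementary cocycle identity $\Psi(s,\om^{_W})=\Psi(0,\om^{_W}+s\,g_h)$, valid whenever either side exists.

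For ``$\Rightarrow$'' I start from an $h$-a.c.\ version $\xi^h$. For every $\om^{_W}$ the map $u\mapsto\xi^h(\om^{_W}+u\,g_h)$ is absolutely continuous on bounded intervals, so by (a) the derivative $\Psi(s,\om^{_W})$ exists for a.e.\ $s$ and
\[
\xi^h(\om^{_W}+u\,g_h)-\xi^h(\om^{_W})=\int_0^u\Psi(s,\om^{_W})\,ds .
\]
A jointly measurable version of $\Psi$ is obtained as the a.e.\ limit of the jointly measurable difference quotients of $\xi^h$. The set $N$ where $\Psi$ fails to exist is $\lambda\otimes\PP^W$-null; by the cocycle identity its sections satisfy $N_s=N_0-s\,g_h$, so picking one $s_0$ with $N_{s_0}$ null (Fubini) and applying (b) shows $N_0$ is $\PP^W$-null. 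Hence $\partial_h\xi:=\Psi(0,\cdot)$ exists a.s.\ and $\Psi(s,\om^{_W})=\rho_{sh}(\partial_h\xi)(\om^{_W})$ a.e. Applying the same section-and-shift argument to the $\lambda\otimes\PP^W$-null set of points that are not Lebesgue points of $\Psi(\cdot,\om^{_W})$ shows that $0$ is a.s.\ a Lebesgue point; by (a) this gives $\frac1u\int_0^u\Psi(s,\om^{_W})\,ds\to\partial_h\xi(\om^{_W})$ for a.e.\ $\om^{_W}$, so the difference quotient converges a.s., hence in probability, which is (i). The displayed FTC then becomes $\rho_{uh}\xi-\xi=\int_0^u\rho_{sh}(\partial_h\xi)\,ds$ $\PP^W$-a.s.\ for each $u$, with the local integrability and joint measurability already secured; this is (ii).

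For ``$\Leftarrow$'' I exploit the Gaussian product structure in the direction $g_h$. With $\ell:=\|h\|_{\h_0}^{-2}\int_0^T h\,dW$ (so $\ell(\om^{_W}+u\,g_h)=\ell(\om^{_W})+u$) and $P\om^{_W}:=\om^{_W}-\ell(\om^{_W})\,g_h$, the orthogonal decomposition of the Gaussian measure makes $\ell$ and $P$ independent, turning $\PP^W$ into a product of a one-dimensional Gaussian and the law of $P$. Writing $\xi(\om^{_W})=\tilde\xi(\ell,P\om^{_W})$ and letting $\vartheta$ be the coordinate form of the jointly measurable field $\rho_{\cdot h}(\partial_h\xi)$ from (ii), condition (ii) reads: for each fixed $u$, for a.e.\ $(t,v)$, $\tilde\xi(t+u,v)=\tilde\xi(t,v)+\int_t^{t+u}\vartheta(r,v)\,dr$. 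Integrating in $u$ and applying Fubini, for a.e.\ $v$ the map $t\mapsto\tilde\xi(t,v)$ agrees, for a.e.\ $t$, with the absolutely continuous function $t\mapsto C(v)+\int_0^t\vartheta(r,v)\,dr$, where $C(v)$ is the ($t$-a.e.\ constant) value of $\tilde\xi(t,v)-\int_0^t\vartheta$, finite by the local integrability in (ii). Defining $\xi^h$ by this expression on the good lines and by $0$ on the remaining $\PP^W$-null set of lines yields a version of $\xi$ that is absolutely continuous along every line $u\mapsto\om^{_W}+u\,g_h$; thus $\xi$ is $h$-a.c.

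The main obstacle is exactly the interplay between Lebesgue-a.e.\ statements along lines and pointwise/in-probability statements in $\om^{_W}$: the per-line FTC only controls the derivative for a.e.\ shift parameter, whereas the directional derivative and the representation (ii) are required at the single parameter $0$ and as $\mathrm{L}_0$-convergence. The device resolving this, and the step I expect to demand the most care, is the section-and-shift argument, where Fubini produces one good parameter $s_0$ and Cameron--Martin quasi-invariance transports the associated null set back to $0$. A secondary technical point is the bookkeeping needed to select jointly measurable versions of $\Psi$ and of $\rho_{sh}(\partial_h\xi)$ so that all the a.e.\ identities hold simultaneously.
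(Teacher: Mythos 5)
Your argument is correct and is essentially the paper's proof written out in full: the paper simply cites Theorem 15.21 of \cite{Janson} for $E=\R$ and remarks that the Radon--Nikodym property of $E$ (via \cite[Corollary IV.1.4]{diestel}) lets the same proof run with Bochner integrals, and your three ingredients --- the vector-valued fundamental theorem of calculus via RNP, Cameron--Martin quasi-invariance to transport null sets along the lines $u\mapsto\om^{_W}+u\,g_h$, and the Fubini/section-and-shift device --- are exactly the content of that cited proof. There is no substantive difference in approach, only in the level of detail supplied.
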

\begin{proof}
For $E=\R$ this is Theorem 15.21 of \cite{Janson}. One can generalize the proof to $E$-valued random variables since by the Radon-Nikodym property of $E$
(see \cite[Corollary IV.1.4]{diestel}), the fundamental theorem of calculus holds for absolutely continuous functions if Bochner integrals are used.
\end{proof}
\bigskip

With the above preparations we are now  able to find sufficient conditions  for $F(\cdot, G) \in  \mathbb{D}^W_{1,2}(E).$

\begin{theorem} \label{Mall-diff-thm}
Assume that  $E= \Ltwo(\Om^J, \ftn^J, \PP^J)$ and $$(\Omega,\mathcal{F},\mathbb{P})=(\Om^W\times\Om^J, \ftn, \PP^W\otimes \PP^J),$$ where $\ftn$ is the
completion of $\ftn^W\otimes\ftn^J$.
Let
\[
   F: \Om  \times \R^d \to \R
\]
be jointly measurable and $G_1,...,G_d \in \mathbb{D}^W_{1,q}(E)$ for some  $q > 1$. Suppose that $p>1$ and
\begin{enumerate}[(i)]
\item \label{C-one}  $F(\om,\cdot) \in \mathcal{C}^1(\R^d)$ for  a.a.  $\om \in \Om$,
\item for all $y \in \R^d:$  $F(\cdot,y) \in  \mathbb{D}^W_{1,p}(E),$
\item  \label{C-three} for a function $\delta\colon{[0,\infty[}\to{[0,\infty[}$, continuous at zero, and for each $N\in \N,$ $\exists K_N \in \bigcup_{r >1} 
\mathrm{L}_r(\PP)$ such that  for a.a.~$\om$ it holds:
\equa
&&\forall y,\tilde y \in B_N(0):=\{x\in \R^d: |x|\le N\}: \\
 && \|  (\D^W F(\cdot, y))(\om) - (\D^W F(\cdot, \tilde y))(\om) \|_{\h_0} \le K_N(\om) \delta(|y - \tilde y|),
\tion
\item\label{C-four}      $(\D^W F)( \cdot,G_1, ...,G_d) \in \mathrm{L}_p(\PP^W; HS(\h_0,E)),$ \\
 $\frac{\partial}{\partial y_k }F( \cdot,G_1,...,G_d)\!\in \bigcup_{r>q'}\mathrm{L}_{r}(\PP^W;E)$  \ for\ $1\leq k\leq d$, \, $\frac{1}{q'}+ \frac{1}{q}=1,$ and
\equa \sum_{k=1}^d \frac{\partial}{\partial y_k }F( \cdot,G_1,...,G_d) \D^W G_k \in \mathrm{L}_p(\PP^W; HS(\h_0,E)). \tion
 \end{enumerate}
Then
\[ F( \cdot,G_1,...,G_d) \in \mathbb{D}^W_{1,p}(E)\]
and
\[
\D^W F( \cdot,G_1,...,G_d) = (\D^W F)( \cdot,G_1,...,G_d) +   \sum_{k=1}^d \frac{\partial}{\partial y_k }F( \cdot,G_1,...,G_d) \D^W G_k
\]
in $  \mathrm{L}_p(\PP^W; HS(\h_0,E)).$
\end{theorem}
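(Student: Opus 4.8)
The plan is to verify the three requirements of Sugita's characterization, Theorem~\ref{Sugita-thm}, for $\xi:=F(\cdot,G_1,\dots,G_d)$, where I abbreviate $G=(G_1,\dots,G_d)$. By that theorem it suffices to show that $\xi\in\mathrm{L}_p(\PP^W;E)$, that $\xi$ is ray absolutely continuous (r.a.c.) and stochastically G\^ateaux differentiable (s.G.d.), and that its Kusuoka--Stroock derivative $\tilde\D\xi$ coincides with the asserted right-hand side; the latter already belongs to $\mathrm{L}_p(\PP^W;HS(\h_0,E))$ by assumption \eqref{C-four}. The $\mathrm{L}_p$-membership of $\xi$ itself is the least substantial point: using that $F(\cdot,y)\in\mathbb{D}^W_{1,p}(E)\subseteq\mathrm{L}_p(\PP^W;E)$ for each fixed $y$ together with the $\mathcal{C}^1$-assumption \eqref{C-one}, one writes $F(\cdot,G)=F(\cdot,0)+\sum_{k=1}^d\int_0^1\frac{\partial}{\partial y_k}F(\cdot,\theta G)\,G_k\,d\theta$ and controls the integrand by means of the local Lipschitz and integrability hypotheses. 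The real content lies in the differentiability, on which I focus.

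To obtain s.G.d., fix $h\in\h_0$ and analyse the increment $\rho_{uh}\xi-\xi$ by separating the two ways in which the Cameron--Martin shift (acting on the $\om^{_W}$-argument) enters:
\begin{align*}
\rho_{uh}\xi-\xi &= \big[F(\cdot+ug_h,\,G(\cdot))-F(\cdot,\,G(\cdot))\big]\\
&\quad+\big[F(\cdot+ug_h,\,G(\cdot+ug_h))-F(\cdot+ug_h,\,G(\cdot))\big],
\end{align*}
where in the first bracket the argument $y=G(\cdot)$ is frozen at the \emph{unshifted} value. For the first bracket I would use that $F(\cdot,y)$ is $h$-a.c.\ for each fixed $y$ (being an element of $\mathbb{D}^W_{1,p}(E)$), so that by Theorem~\ref{Janson-thm} it equals $\int_0^u\rho_{sh}\big(\partial_hF(\cdot,y)\big)\,ds$ with $\partial_hF(\cdot,y)=\langle\D^WF(\cdot,y),h\rangle_{\h_0}$; after substituting the random argument $y=G$, dividing by $u$ and letting $u\to0$, the continuity of the shift from Lemma~\ref{cameron-martin}(iii) yields, in $E$-probability, the limit $\langle(\D^WF)(\cdot,G),h\rangle_{\h_0}$. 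For the second bracket I would invoke assumption \eqref{C-one} and rewrite it, by the fundamental theorem of calculus in $y$, as $\sum_{k=1}^d\Big(\int_0^1\frac{\partial}{\partial y_k}F\big(\cdot+ug_h,\,G(\cdot)+\theta(G(\cdot+ug_h)-G(\cdot))\big)\,d\theta\Big)\big(G_k(\cdot+ug_h)-G_k(\cdot)\big)$; dividing by $u$, the difference quotient of $G_k$ converges to $\partial_hG_k=\langle\D^WG_k,h\rangle_{\h_0}$ since $G_k$ is s.G.d., while the bracketed factor converges to $\frac{\partial}{\partial y_k}F(\cdot,G)$ by the shift continuity of Lemma~\ref{cameron-martin}(iii) and the continuity in $y$ from \eqref{C-one}. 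Adding the two limits gives $\partial_h\xi=\langle\tilde\D\xi,h\rangle_{\h_0}$ with $\tilde\D\xi$ the stated expression, where the product $\frac{\partial}{\partial y_k}F(\cdot,G)\,\D^WG_k$ is read pointwise in $\om^{_J}$ and \eqref{C-four} is exactly what makes $\tilde\D\xi$ a genuine $HS(\h_0,E)$-valued element of $\mathrm{L}_p$. This establishes s.G.d.

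For ray absolute continuity I would again appeal to Theorem~\ref{Janson-thm}: having produced $\partial_h\xi$, it remains to check the integral identity $\rho_{uh}\xi-\xi=\int_0^u\rho_{sh}(\partial_h\xi)\,ds$ $\PP^W$-a.s.\ together with $\int_{-|u|}^{|u|}\|\rho_{sh}(\partial_h\xi)\|_E\,ds<\infty$ a.s. I would assemble this identity from the two brackets above, the first coming already in integrated form from the $h$-a.c.\ of $F(\cdot,y)$ and the second from the $h$-a.c.\ of the $G_k$ combined with the $\mathcal{C}^1$-structure; the required integrability of the integrand follows from \eqref{C-four} together with the Cameron--Martin estimate of Lemma~\ref{cameron-martin}(ii), which bounds $\int_0^u\rho_{sh}K\,ds$ in $\mathrm{L}_q$ by the $\mathrm{L}_p$-norm of $K$ for any $q<p$. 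With $\xi\in\mathrm{L}_p$, r.a.c., s.G.d.\ and $\tilde\D\xi\in\mathrm{L}_p(\PP^W;HS(\h_0,E))$ all in hand, Theorem~\ref{Sugita-thm} then delivers both $F(\cdot,G)\in\mathbb{D}^W_{1,p}(E)$ and the claimed chain rule $\D^W\xi=\tilde\D\xi$.

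\textbf{The main obstacle} I anticipate is the substitution of the random, $E$-valued argument $G$ into identities that a priori hold only for each fixed deterministic $y\in\R^d$, made delicate by $F$ being merely $\mathcal{C}^1$ and by $\D^WF(\cdot,y)$ being only \emph{locally} Lipschitz in $y$ with an $\mathrm{L}_r$- rather than $\mathrm{L}_\infty$-constant $K_N$. To carry out the substitution rigorously I would first fix a jointly measurable version of $(s,\om^{_W},y)\mapsto\rho_{sh}\big(\partial_hF(\cdot,y)\big)(\om^{_W})$, establish the identities simultaneously for $y$ in a countable dense subset of $\R^d$, and extend to arbitrary $y$ and then to $y=G(\om^{_W},\cdot)$ using the Lipschitz bound \eqref{C-three}; a localisation on the sets $\{|G|\le N\}$ reduces everything to bounded increments, and the passage $u\to0$ is dominated by means of Lemma~\ref{cameron-martin}(ii) with $1\le q<p$. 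It is precisely at this last step that the standing hypotheses $p>1$ and $K_N\in\bigcup_{r>1}\mathrm{L}_r(\PP)$ are used.
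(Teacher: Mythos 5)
Your overall strategy coincides with the paper's: verify Sugita's characterization (Theorem \ref{Sugita-thm}) by establishing ray absolute continuity and stochastic G\^ateaux differentiability via Theorem \ref{Janson-thm}, split the increment into a part where the Cameron--Martin shift acts on the first argument of $F$ with $y$ frozen and a part where only $G$ moves, and handle the substitution $y=G$ by proving the fixed-$y$ identities on an exception set independent of $y$ --- which is exactly the paper's Step 2, using \eqref{C-three} and Lemma \ref{cameron-martin} to make the right-hand side continuous in $y$.

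There is, however, a genuine gap in your treatment of ray absolute continuity. Theorem \ref{Janson-thm} requires the identity $\rho_{uh}\xi-\xi=\int_0^u\rho_{sh}(\partial_h\xi)\,ds$ in which $\rho_{sh}$ acts on the \emph{entire} composite $\partial_h\xi=\langle(\D^WF)(\cdot,G)+\nabla_yF(\cdot,G)\cdot\D^WG,h\rangle_{\h_0}$. Your two-bracket decomposition for a finite increment $u$ produces instead $\int_0^u\langle(\D^WF)(\om+sg_h,G(\om)),h\rangle\,ds$ (with $G$ frozen at the unshifted point) plus a product of the endpoint-evaluated factor $\int_0^1\nabla_yF(\om+ug_h,\dots)\,d\theta$ with $\int_0^u\rho_{sh}\partial_hG(\om)\,ds$; neither summand has the required form $\int_0^u\rho_{sh}(\cdots)\,ds$ with the correct composite integrand, so ``assembling'' the identity from these two brackets does not close the argument. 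The paper bridges precisely this gap in its Step 3 by telescoping over a partition $0=s_0<\dots<s_n=t_1$, applying the frozen-argument identities only on the small subintervals, and showing that the three accumulated remainder terms vanish as the mesh tends to zero ($I_1$: the mismatch between $G(\rho_{s_k h}\om)$ and $G(\rho_{sh}\om)$ inside $\D^WF$, controlled by \eqref{C-three} and Lemma \ref{cameron-martin}(ii); $I_2$: continuity of $\nabla_yF(\rho_{s_{k-1}h}\om,\cdot)$ from \eqref{C-one}; $I_3$: uniform continuity in probability of $s\mapsto\rho_{sh}\nabla_yF(\cdot,G)$ from Lemma \ref{cameron-martin}(iii)). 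A related, smaller issue occurs in your s.G.d.\ limit: for the first bracket you invoke Lemma \ref{cameron-martin}(iii), but its integrand is not $\rho_{sh}$ of a fixed random variable (again $G$ is frozen), so you need the same $K_N$-based comparison as in $I_1$ rather than shift-continuity alone. Your ``main obstacle'' paragraph addresses the measurability and substitution issues of Step 2, but not this partition argument, which is the technical core of the proof.
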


\begin{remark} In Theorem \ref{Mall-diff-thm} it is possible to use also
 \equa \text{(iii)'} \quad
\forall \vare >0 \, \exists \delta_{\vare }(y) >0:  \forall \om \in \Om,  \forall \tilde y \in B_{\delta_{\vare}}(y):
\tion
\equa
  \|  (\D^W F(\cdot, y))(\om) - (\D^W F(\cdot, \tilde y))(\om) \|_{\h_0} \le \vare.
\tion
instead  of \eqref{C-three}. Neither of both assumptions implies the other one.
\end{remark}

\begin{proof} {\tt Step 1.}
We will  use  the characterization of  $\mathbb{D}^W_{1,p}(E)$  from Theorem \ref{Sugita-thm}.
In fact, we will prove for any $u\in \R$ and $h \in \h_0$ the relations
\begin{equation} \label{rac-aim}
\begin{split}
\rho_{uh} F(\om^{_W\!},G(\om^{_W\!})) -  F(\om^{_W\!},G(\om^{_W\!}))
=  \int_0^u  \rho_{sh}(\partial_h F) (\om^{_W\!},G(\om^{_W\!})) ds,\quad \PP^W\text{-a.s.},\\
(\partial_h F) (\cdot,G)
= \langle  (\D^W F)(\cdot,G) +  \nabla_y F(\cdot, G) \cdot  \D^W G,h\rangle_{\h_0}, \quad \PP\text{-a.s.}
\end{split}
\end{equation}
where the first equation is $E$-valued with $G=(G_1,\dotsc,G_d)$, and the second equation is scalar with $\nabla_y =(\frac{\partial}{\partial y_1},\dotsc, \frac{\partial}{\partial y_d})$. \bigskip\\
Since by assumption \eqref{C-four}
\[   (\D^W F)(\cdot,G) +  \nabla_y F(\cdot, G) \cdot  \D^W G   \in \mathrm{L}_p(\PP^W; HS(\h_0,E))\]
we infer that   $\int_{-|u|}^{|u|}  \|\rho_{sh} (\partial_h F) (\cdot,G)\|_E ds < \infty$, $\PP^W$-a.s and according to Theorem \ref{Janson-thm} it follows from the first line of \eqref{rac-aim} that $F(\cdot, G)$ is r.a.c.
From the second line of \eqref{rac-aim} we get that  $F(\cdot, G)$ is s.G.d.
 and
\[  \tilde \D F(\cdot,G) = (\D^W F)(\cdot,G) +  \nabla_y F(\cdot, G) \cdot  \D^W G   \]
in $ \mathrm{L}_p(\PP^W; HS(\h_0,E))$. Together with Theorem \ref{Sugita-thm} this would imply the assertion of the theorem. So it remains to show the relations in \eqref{rac-aim} which will be done in Steps 2 and 3.
\smallskip

{\tt Step 2.}
Since  $F(\cdot,y) \in \mathbb{D}^W_{1,p}(E)$   we have by  Theorem \ref{Sugita-thm}   that
 $F(\cdot,y)$ is r.a.c. and
 \[\D^W F(\cdot, y) =  \tilde \D F(\cdot, y)  \in   \mathrm{L}_p(\PP^W; HS(\h_0,E)). \]
 Hence by Theorem \ref{Janson-thm}
for each   $u \in \R$ and $h \in \h_0$  the $E$-{\it valued} equation
\equa
 F(\om^{_W\!\!} + u g_{h},y) - F(\om^{_W\!\!},y) = \int_0^u\rho_{sh}   \langle (  \D^W F(\cdot ,y))( \om^{_W\!\!})  ,h\rangle_{\h_0} ds,
\tion
holds for all $\om^{_W\!\!}$ up to an  exception set $C_y \in \ftn^W$ with $\PP^W(C_y)=0.$ Consequently, for each   $u \in \R$ and $h \in \h_0$ we have the
{\it real-valued} equation (where we use the notation  $\rho_{uh}(\om) = (\om^{_W} +ug_h, \om^{_J})$)
\equal \label{f-is-h-a-c}
 F(\rho_{uh}(\om), y) - F(\om,y) = \int_0^u \rho_{sh}   \langle (  \D^W F(\cdot ,y))( \om)  ,h\rangle_{\h_0} ds,
\tionl
for all $\om$  with the exception of a  set $\bar{C}_y \in \ftn$ with $\PP(\bar{C}_y)=0.$ Since the LHS is a.s.~continuous in $y$, we can find an exception set $\bar{C} \in \ftn$ with 
$\PP(\bar{C})=0,$ which is independent of $y,$  provided that we can show a.s.~continuity in $y$ of the RHS. To do this we estimate for $y, \tilde y  \in  B_N(0)$ the expression
  \equa
&&\les  \bigg | \int_0^u \rho_{sh}   \langle (  \D^W F(\cdot , \tilde y))( \om)  ,h\rangle_{\h_0} ds -\int_0^u \rho_{sh}   \langle (  \D^W F(\cdot ,y))( \om)  ,h\rangle_{\h_0} ds \bigg | \\
& \le &  \bigg | \int_0^u \rho_{sh}   \langle (  \D^W F(\cdot ,\tilde y))( \om) -   (  \D^W F(\cdot ,y))( \om)  ,h\rangle_{\h_0}    ds  \bigg | \\
& \le & \|h \|_{\h_0} \! \int_0^u \!\!  \|  (\D^W F(\cdot ,\tilde y))( \om^{_W\!\!}+s g_h,\om^{_J\!\!} ) -   (  \D^W F(\cdot ,y))( \om^{_W\!\!}+s g_h,\om^{_J\!\!}) \|_{\h_0}     ds  \\
& \le & \|h \|_{\h_0} \delta({|y-\tilde y|})    \int_0^u K_N(  \om^{_W\!\!}+s g_h, \om^{_J\!\!}   ) ds.
  \tion

Since by Lemma \ref{cameron-martin}    $\int_0^u \rho_{sh} K_N ds < \infty$, $\PP$-a.s., it follows that for a.a. $\om$  the RHS of  \eqref{f-is-h-a-c} is continuous in $y$.
Consequently,  on  $\Om\! \setminus \!\bar{C} \in \ftn$   relation \eqref{f-is-h-a-c}  is true  for all $y \in \R^d.$
Putting the terms to zero on $\bar{C}$, the right hand side of \eqref{f-is-h-a-c} is jointly measurable w.r.t.~$(\om,y)$.
We may replace  $y$ by $G(\om):=( G_1(\om),\ldots,G_d(\om))$
and get
\begin{align} \label{F-with-G-formula}
 F(\rho_{uh}(\om),G(\om)) - F(\om,G(\om)) = \int_0^u  \langle  (\D^W F)(\rho_{sh}(\om),G(\om)),h\rangle_{\h_0} ds, \,\,\PP\text{- a.s.}
\end{align}
So far the Cameron-Martin shift $\rho_{uh}$  acts only on the first variable of $F(\om,G(\om)).$ In the following step we derive the representation for
$ \rho_{uh}F(\om,G(\om)).$

{\tt Step 3.}
We show that $F(\cdot,G)$ is r.a.c. For this  we choose an interval $[0,t_1]$,  $t \in [0,t_1],$
let $ 0=s_0 < s_1 < ...< s_n =t_1$
and consider for $s^t_k := s_k \wedge t$  the expression
\begin{align} \label{sum-of-F}
 \les \rho_{th}F(\om,G(\om)) -  F(\om,G(\om))
= \sum_{k=1}^n\rho_{s_k^t h} F(\om,G(\om)) - \rho_{s_{k-1}^t h} F(\om,G(\om)). \non\\
\end{align}
For any $b:=s_k^t$ and $a:= s_{k-1}^t$ we derive from \eqref{F-with-G-formula} and the mean-value theorem that a.s.
\equa
&& \les\rho_{b h} F(\om,G(\om)) - \rho_{a h}F(\om,G(\om)) \\
& =& [F(\rho_{b h}(\om),G(\rho_{b h}(\om))) - F(\rho_{a h}(\om),G(\rho_{b h}(\om)))] \\
&&+[F(\rho_{a h}(\om),G(\rho_{b h}(\om))) - F(\rho_{a h}(\om),G(\rho_{a h}(\om)))] \\
&=& \int_a^b   \langle  (\D^W F)(\rho_{sh}(\om),G(\rho_{b h}(\om))),h\rangle_{\h_0} ds  \\
&& +  \nabla_y F\left(\rho_{a h}(\om), G(\rho_{a h}(\om)) + \theta  [ G(\rho_{b h}(\om)) - G(\rho_{a h}(\om))]\right) \\
&& \quad \quad   \quad \quad  \cdot [G(\rho_{b h}(\om)) - G(\rho_{a h}(\om))]
\tion
for some $\theta \in[0,1].$
We may write the last term because $F(\om, y)$ is $\mathcal{C}^1$ w.r.t. $y $.
Similarly to \eqref{f-is-h-a-c} ,  for  each $G_l \in \mathbb{D}^W_{1,q}(E),$ we have   for all $t \in \R$ and $h \in \h_0$ that
\equal \label{Y-h-a-c}
\rho_{th} G_l(\om) - G_l(\om) &=& \int_0^t      \langle ( \D^W G_l)(\rho_{sh}(\om)),h \rangle_{\h_0} ds,  \quad  \PP\text{-}a.s.  \non \\
  \text{ with }\quad   m(\om)&:= &\max_l \int_0^{t_1} | \rho_{sh}  \langle ( \D^W G_l)(\om),h \rangle_{\h_0} |ds <\infty.
  \tionl

To obtain \eqref{rac-aim} we rewrite \eqref{sum-of-F}
in the following way
\equa
&& \les \rho_{th}  F(\om,G(\om)) -  F(\om,G(\om)) \\
&=&  \int_0^t  \rho_{sh} [\langle  (\D^W F)(\om,G(\om)),h\rangle_{\h_0}  +  \langle \nabla_y F(\om, G(\om)) \cdot   (\D^W G)(\om),h\rangle_{\h_0} ]ds  \\
&& +  \sum_{k=1}^n   \text{ remainder terms}.
\tion
The remainder terms are given by
\equa
&&  \int_{s_{k-1}^t}^{s_k^t}   \rho_{sh} \langle  (\D^W F)(\om,G(\rho_{ (s_k^t -s)h} (\om)))-  (\D^W F)(\om,G(\om)),h\rangle_{\h_0} ds  \\
&& +   \int_{s_{k-1}^t}^{s_k^t} \{ \rho_{s^t_{k-1}h} \nabla_y F( (\om), G(\om)+ \theta  [\rho_{(s^t_k-s^t_{k-1}) h} G(\om) -G(\om)]) \\
&& \quad \quad \quad \quad- \rho_{s^t_{k-1}h} \nabla_y F  (\om, G(\om))  \}                \cdot  \rho_{s h}\partial_hG(\om) ds   \\
&& +  \int_{s_{k-1}^t}^{s_k^t}  \{ \rho_{s^t_{k-1}h} \nabla_y F  (\om, G(\om)) -  \rho_{sh} \nabla_y F  (\om, G(\om)) \} \rho_{sh} \partial_hG (\om)ds \\
&=& I_1 +I_2+I_3,
\tion
where we use  $\partial_hG(\om):= \langle  (\D^W G)(\om),h\rangle_{\h_0} $ as an abbreviation.
It is sufficient to show that the sum of the remainder terms tends in probability to zero for  a fixed sequence of partitions with $\|(s_k^{t,n})\|:= \max_{1 \le k\le n}| s_k^{t,n} -s_{k-1}^{t,n}| \to 0.$
Because of \eqref{Y-h-a-c}, for arbitrary $ \vare_1>0$ one can choose $\delta_1$ and $\|(s_k^{t,n})\|$ sufficiently small such that for  all  $s \in [s_{k-1}^{t,n}, s_{k}^{t,n}]$
\equa
 |\rho_{s_k^{t,n} h} G_l(\om) -\rho_{s h}G_l(\om) | \le \int_s^{s^{t,n}_k} | \langle\rho_{rh} ( \D^W G_l)(\om),h \rangle| dr < \delta_1
\tion
and, by the choice of $\delta_1$ and the continuity of $\delta$ at zero, 
\equa
\delta(|\rho_{s_k^{t,n} h} G_l(\om) -\rho_{s h}G_l(\om) |)\le \vare_1.
\tion
For $\om$  with $ \sup_{s_{k-1}^{t,n}\le s\le s_{k}^{t,n}} |\rho_{sh}G(\om)|\le N $ assumption \eqref{C-three} implies
\equa && \les| \langle  (\D^W F)(\rho_{sh}(\om),G(\rho_{s_k^{t,n} h} (\om)))-  (\D^W F)(\rho_{sh}(\om),G(\rho_{sh}(\om))),h\rangle_{\h_0}|\\
&\le& \|h\|_{\h_0} K_N(\rho_{sh}(\om)) \, \delta(|G(\rho_{s_k^{t,n} h} (\om))-G(\rho_{sh}(\om))|) \\
&\le& \|h\|_{\h_0} K_N(\rho_{sh}(\om)) \, \vare_1.
\tion
Since Lemma \ref{cameron-martin} (ii) implies that $\int_{{ s_{k-1}^{t,n}}}^{s_k^{t,n}}\rho_{sh} K_N(\om)ds < \infty $ a.s. we have $I_1 \to 0$ a.s. for
{$\|(s^{t,n}_k)\| \to 0$}.
\bigskip \\
To estimate $I_2$ we conclude from assumption \eqref{C-one} that
for a.a.~$\om$ it holds for all $n$ and $k=1,...,n$ that $F( \rho_{s^{t,n}_{k-1}h}\om,\cdot) \in  \mathcal{C}^1(\R^d).$ For any such $\om$ and arbitrary $ \vare_2>0$ we have
\equa
 |\rho_{s^{t,n}_{k\!-\!1}h}\{\nabla_y F(\om, G(\om)+ \theta[\rho_{(s^{t,n}_k -s^{t,n}_{k\!-\!1}) h} G(\om) \!-\! G(\om)]) \!-\!  \nabla_y F  (\om, G(\om)) \}| \! < \! \vare_2
\tion
if only $\|(s^{t,n}_k)\|$ is small enough.\bigskip

For the remaining integral $I_3$ we proceed as follows: 
By assumption \eqref{C-four}  there exists  a number $r>q'$ such that $\frac{\partial}{\partial y_k } F(\cdot , G )\in\mathrm{L}_{r}(\PP^W;E)$ for $1 \le k \le d.$
Choose $q_1 \in (1, q)$ and $q_1' \in (q',r)$ such that $\frac{1}{q_1} + \frac{1}{q_1'}=1.$ 
By Lemma \ref{cameron-martin} \eqref{iii} the map
\equa
     [0,t_1] \ni s \mapsto  \rho_{sh}   \nabla_y F(\cdot , G )  \in \mathrm{L}_{q_1'}(\PP^W;E)^d 
     \tion
is uniformly continuous.  From the proof of Lemma \ref{cameron-martin} \eqref{ii} (without integrating over $s$) one can see  that there is some constant $C_h=C(h, q,q_1)>0$
such that  
\[\sup_{0\le s \le t_1} \left\| \rho_{sh} (\partial_h  G)\right\|_{\mathrm{L}_{q_1}(\PP^W;E)}   \le  C_h \left\|  (\partial_h  G)\right\|_{\mathrm{L}_{q}(\PP^W;E)}. 
\]
 Therefore,
\equa
&& \les \E\sum_{k=1}^n   \bigg \| \int_{s_{k-1}^{t,n}}^{ s_k^{t,n}}  \big [  \rho_{s^{t,n}_{k-1}h} \nabla_y F(\cdot, G )    -  \rho_{sh}  \nabla_y F(\cdot , G ) \big ] \cdot
   \rho_{sh}  (\partial_h  G) ds  \bigg \|_E  \\
&\le & \!\sum_{k=1}^n  \!\!  \int_{s_{k-1}^{t,n}}^{ s_k^{t,n}} \! \left\| \| \rho_{s^{t,n}_{k-1}h} \nabla_y F(\cdot , G )    -  \rho_{sh}  \nabla_y F(\cdot , G )\|_E\right\|_{\mathrm{L}_{q'_1}(\PP^W)}\\
   &&    \quad\quad\quad\quad\quad\quad\quad  \times\left\| \|\rho_{sh} (\partial_h  G)\|_E\right\|_{\mathrm{L}_{q_1}(\PP^W)} ds   \\
& \le & \!C_h\sum_{k=1}^n\!\!\int_{s_{k-1}^{t,n}}^{ s_k^{t,n}}\!\left\|  \|\rho_{s^{t,n}_{k-1}h} \nabla_y F(\cdot , G )    -  \rho_{sh}  \nabla_y F(\cdot , G )\|_E\right\|_{\mathrm{L}_{q'_1}(\PP^W)} \\
&&       \quad\quad\quad\quad\quad\quad\quad \times\left\|  \|(\partial_h  G)\|_E\right\|_{\mathrm{L}_{q}(\PP^W)} ds\\
&&  \to  0 \quad  \text{ for }   \|(s^{t,n}_k)\|  \to 0.
\tion
As thus the remainder term $I_3$ tends to $0$ in $\mathrm{L}_{1}(\PP^W;E)$, it also tends to zero in $\mathrm{L}_{0}(\PP)$.   

\end{proof}
\bigskip
\section{Malliavin derivative of solutions to BSDEs}

IIn this section we apply our  theorems on Malliavin differentiability of random functions to generators of BSDEs.  As a result we state in Theorem \ref{diffthm}   that  under conditions on the smoothness of the data $(\xi, f)$  solutions to BSDEs are  Malliavin differentiable.  For simplicity, we set $\sigma=1$ in \eqref{LevyIto}. The  assertions hold  true (with the appropriate modifications) if at least one of them, $\sigma$ or the L\'evy measure $\nu,$ are non-zero.
\bigskip

For $\ 0\leq t\leq T$ we consider the BSDE
\equal \label{beq2}
Y_t&=&\xi+\int_t^T  f\left(X,s,Y_{s}, Z_s,      \int_{\R_0} g( U_s(x))g_1(x) \nu(dx) \right)ds
 -     \int_t^T Z_s   dW_s \non \\
&&\hspace*{12em}-\int_{{]t,T]}\times{\R_0}}U_s(x) \tilde N(ds,dx),
\tionl
with $f\colon D{[0,T]}\times{[0,T]}\times\R^3 \to \R.$ The  conditions on $g$ and $g_1$ are specified  in ($\A_f$\ref{g-condition})  below and ensure that the integral is well-defined.
We use the abbreviations
\equal \label{g-nu}
[g(u)]_\nu:=  \int_{\R_0} g( u(x))g_1(x) \nu(dx)
\tionl
where   $u: \R_0 \to \R$  denotes a measurable function, and
\equa
    f_g(h,s,y, z, u) := f(h,s,y, z, [g(u)]_\nu),
\tion
so that
\[
\int_t^T  f\left(X,s,Y_{s}, Z_s,      \int_{\R_0} g( U_s(x))g_1(x) \nu(dx) \right)ds=\int_t^T  f_g\left(X,s,Y_{s}, Z_s, U_s \right)ds.
\]
The motivation to consider an expression of this form arises from \cite{Morlais} and \cite{Becherer} where BSDEs related to utility maximization have been investigated.
However, to show Malliavin differentiability, our expression had to be chosen in a simpler way.   For the above expression, when  $g$  is the identical map, Malliavin differentiability of
$(Y,Z,U)$ has been stated  in \cite[Theorem 3.5.1]{Delong}. \bigskip \\
For shortness of notation, we define $$\underline{Z}_{s,x}:=\begin{cases}Z_s, &x=0,\\ U_s(x), &x\neq 0\end{cases}$$ to write
$$\int_t^T Z_s   dW_s +\int_{{]t,T]}\times{\R_0}}U_s(x) \tilde N(ds,dx)=\int_{{]t,T]}\times\R} \underline{Z}_{s,x} M(ds,dx).$$
For the terminal value $\xi$ and the function $f_g$ we agree upon the following assumptions:  \bigskip\\
($\A_{\xi}$)  $\xi \in \mathbb{D}_{1,2}$.  \bigskip \\
($\A_f$) \hspace*{2em} \vspace*{-2.0em} \begin{enumerate}[\quad \quad a)]
\item  \label{f-meas}
 $f\colon D{[0,T]}\times{[0,T]}\times\R^3 \to \R$ is jointly measurable, adapted to $(\mathcal{G}_t)_{t\in{[0,T]}}$ defined in  \eqref{filtrationG-t}.
  \item   \label{f(000)-Ltwo}  $ \E\int_0^T\left|f(X,t,0,0,0)\right|^2dt<\infty.$
\item   \label{f-Lip}
$f(X,.,.) \in \mathcal{C}([0,T]\times\R^3) \,\, \PP$-a.s. and
 $f$ satisfies the following Lipschitz condition: There exists a constant $L_f$ such that for all  $t\in{[0,T]}, \eta, \tilde\eta\in\R^3$
 $$ |f\left(X,t, \eta \right)-f\left(X,t,\tilde \eta\right)|\leq L_f  |\eta- \tilde \eta|,$$  $\mathbb{P}$-a.s.
 \item \label{f-yzu-diff}
For all $t\in [0,T]$  and  $i=1,2,3,$ \,$\exists  \,\, {\partial_{\eta_i}}f(X,t,\eta)$  $\mathbb{P}$-a.s. and the functions
\equa
[0,T] \times \R^3 \ni (t,\eta)   \mapsto {\partial_{\eta_i}}f(X,t,\eta)
\tion
are  $\mathbb{P}$-a.s. continuous.

 \item \label{f-Mall-diff-Lip}
$f(X,t,\eta)\in\mathbb{D}_{1,2}$  for all $(t,\eta)\in{[0,T]}\times\R^3,$ and  $\forall t\in{[0,T]}$, there is a function $\delta^t\colon{[0,\infty[}\to{[0,\infty[}$, continuous at zero, with the property that $\forall N\in\N $ $\
\exists\ K^t_N\in\bigcup_{p> 1}\mathrm{L}_p$ such that  for a.a. $\omega$
\begin{equation*}
\begin{split}
&  \forall \eta, \tilde \eta\in B_N(0): \\
&\|\left(\D_{.,0}f(X,t,  \eta)\right)(\omega)-\left(\D_{.,0}f(X,t,  \tilde \eta)\right)(\omega)\|_{\h_0}
   <K^t_N(\omega)\delta^t(\left|\eta-  \tilde \eta\right|),
\end{split}
\end{equation*}
where for $\D_{.,0}f(X,t,  \eta)$ we always take a progressively measurable version in $t$.
\item   \label{f-Mall-diff-bounded}  Assume  there is a random field $\Gamma\in\mathrm{L}_2(\PP \otimes \m)$, such that
 for all random vectors $G \in (\mathrm{L}_2)^3$   and      for a.e. $t$ it holds
\equa
 \left|\left(\D_{s,x}f\right)(t,G)\right|\le \Gamma_{s,x}, \quad  \mathbb{P}\otimes\m \text {-}a.e.
\tion
where $\left(\D_{s,x}f\right)(t,G) := \D_{s,x}f(X,t,\eta)\mid_{\eta=G}.$
\item  \label{g-condition}  $g \in \mathcal{C}^1(\R)$ with bounded derivative and  $g_1\in\mathrm{L}_2(\R_0 , \mathcal{B}(\R_0),\nu).$
\end{enumerate}
A triple  $(Y,Z,U) \in \mathcal{S}_2\times \Ltwo(W) \times \Ltwo(\tilde{N})$ which satisfies (\ref{beq2}) is called a solution to the BSDE (\ref{beq2}).
\begin{remark} \begin{enumerate}
\item
For a function $F\colon\Omega\times{[0,T]}\times\R^3 \to \R$ being jointly measurable, adapted to $(\mathcal{F}_t)_{t\in{[0,T]}}$  one can always find a function $f$ as in ($\A_f $\ref{f-meas}),
such that $\mathbb{P}$-a.s. the equation $$F(\omega,\cdot,\cdot)=f(X(\omega),\cdot,\cdot)$$ holds. Furthermore,  for all $t\in{[0,T]}$, $\eta\in\R^3$     the equation
\[F(\omega,t,\eta)=f\left(X^t(\omega),t,\eta\right)\ \mathbb{P}\text{-a.s.}\] is satisfied  ( for a proof  see \cite[Theorem 4.9.]{delzeith},
\cite[Lemma 3.2., Theorem 3.3.]{Steinicke},  and for the notation  $X^t(\omega)$  recall \eqref{cut-off-in-t}). In particular, for functions satisfying ($\A_f$\ref{f-meas}) it holds $$f\left(h^t,t,\eta\right)=f\left(h,t,\eta\right)\ \mathbb{P}_X\text{-a.s.}$$
for all $t\in{[0,T]}$, $\eta \in\R^3.$
\item Assumption  (${\A}_f$\ref{f-Mall-diff-bounded}) is, in fact, stronger than needed in Theorem \ref{diffthm} below. It is enough to require
that  $  \left|\left(\D_{s,x}f\right)(t,G)\right|\le \Gamma_{s,x}, \,\,  \mathbb{P}\otimes\m \text {-}a.e.$
holds for the solution $G= (Y_t , Z_t, U_t)$ and for the members $G= (Y^n_t , Z^n_t, U^n_t)$ of the approximating sequence appearing  in the proof
of Theorem \ref{diffthm}.  With this more general assumption one can study, for example,  BSDEs with linear generators with random coefficients.

\item The assumption (${\A}_f$\ref{g-condition}) on $g$ can be extended to a dependency on $t$ and $\om.$ Also $g_1$ may be assumed to be time-dependent. To keep the same proof of
Theorem \ref{diffthm} feasible, we have to impose conditions (${\A}_f$\ref{f-meas}-\ref{f-Mall-diff-bounded}) on $g$ (with  $\R^3$ replaced by $\R$ as $g$ is then a random process with
one parameter). Furthermore, we have to assume that $g_1\colon [0,T]\times\R_0\to\R$ is Borel measurable and that $\|g_1(t,.)\|_{\Ltwo(\nu)}$ is bounded in $t\in [0,T]$.
\end{enumerate}
\end{remark}

To cover the issue of existence of  solutions to BSDEs we refer to the following result:
\begin{theorem}[\cite{Tangli}, Lemma 2.4] \label{existence}
Assume $(\xi,f)$ satisfies the assumptions $\xi\in\Ltwo$ and  (${\A}_f$\ref{f-meas}-\ref{f-Lip}). Then the BSDE (\ref{beq2}) has a  unique solution $(Y,Z,U) \in
 \mathcal{S}_2\times \Ltwo(W) \times \Ltwo(\tilde{N}).$
\end{theorem}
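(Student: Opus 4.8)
The plan is to reduce \eqref{beq2} to a standard globally Lipschitz BSDE driven by the random measure $M$ of \eqref{measureM} and then to solve it by a fixed-point argument, using the predictable representation property that the chaos decomposition of Section 3 supplies for the filtration $(\ftn_t)$. Writing the driver as $f_g(X,s,Y_s,Z_s,U_s)=f(X,s,Y_s,Z_s,[g(U_s)]_\nu)$, the first task is to verify that, as a map of $(y,z,u)\in\R\times\R\times\Ltwo(\nu)$, it is globally Lipschitz and that $s\mapsto f_g(X,s,0,0,\mathbf 0)$ lies in $\Ltwo(\PP\otimes\lambda)$; these are precisely the hypotheses under which \cite{Tangli} applies, so this reduction is the whole point.

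For the Lipschitz reduction I would use (${\A}_f$\ref{g-condition}): since $g\in\mathcal{C}^1(\R)$ has bounded derivative and $g_1\in\Ltwo(\nu)$, the Cauchy--Schwarz inequality gives
\[
\bigl|[g(u)]_\nu-[g(\tilde u)]_\nu\bigr|\le\|g'\|_\infty\int_{\R_0}|u(x)-\tilde u(x)|\,|g_1(x)|\,\nu(dx)\le\|g'\|_\infty\,\|g_1\|_{\Ltwo(\nu)}\,\|u-\tilde u\|_{\Ltwo(\nu)},
\]
so $u\mapsto[g(u)]_\nu$ is Lipschitz from $\Ltwo(\nu)$ to $\R$. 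Composing with the Lipschitz bound of (${\A}_f$\ref{f-Lip}) produces a single Lipschitz constant for $f_g$ in $(y,z,u)$, while the base-point integrability $\E\int_0^T|f_g(X,s,0,0,\mathbf 0)|^2\,ds<\infty$ follows by combining that same bound with (${\A}_f$\ref{f(000)-Ltwo}); joint measurability and $(\ftn_t)$-adaptedness are inherited from (${\A}_f$\ref{f-meas}).

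With these properties I would run the usual Picard scheme on $\mathcal{S}_2\times\Ltwo(M)$. Given $(Y,\underline Z)$ there, set $\beta_s:=f_g(X,s,Y_s,Z_s,U_s)$ and form the square-integrable martingale $\Theta_t:=\E[\xi+\int_0^T\beta_s\,ds\mid\ftn_t]$. The chaos decomposition of Section 3 furnishes a predictable $\underline Z'\in\Ltwo(M)$ with $\Theta_t=\Theta_0+\int_{{]0,t]}\times\R}\underline Z'_{s,x}\,M(ds,dx)$; splitting $M$ through \eqref{measureM} recovers $(Z',U')$, and $Y'_t:=\E[\xi+\int_t^T\beta_s\,ds\mid\ftn_t]$ then defines the map $\Phi\colon(Y,\underline Z)\mapsto(Y',\underline Z')$. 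Applying It\^o's formula to $e^{\kappa s}|Y'-\bar Y'|^2$ for the images of two inputs and choosing $\kappa$ large relative to the Lipschitz constant makes $\Phi$ a contraction in the equivalent $\kappa$-weighted norm; its unique fixed point solves \eqref{beq2}, the same estimate yields uniqueness, and Doob's together with the Burkholder--Davis--Gundy inequality upgrades the $\Ltwo$-control of $Y$ to the $\mathcal{S}_2$-bound.

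The step I expect to be the main obstacle is the contraction estimate, where one must track how the Lipschitz dependence of $f_g$ on the jump integrand $u$, measured in the $\Ltwo(\nu)$-norm, interacts with the It\^o isometry for $\tilde N$, and check that one weight $\kappa$ (depending only on $L_f$, $\|g'\|_\infty$ and $\|g_1\|_{\Ltwo(\nu)}$) simultaneously controls the $Z$- and $U$-parts. By contrast, the key structural input that lets the scheme close --- the predictable representation of $\Ltwo(\ftn_T)$-martingales against $M$ --- is already provided by the chaos decomposition recalled in Section 3 and need not be reproven.
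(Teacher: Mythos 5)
The paper does not prove this theorem at all: it is imported wholesale as Lemma 2.4 of Tang and Li, and the only content the authors supply is the (unstated) observation that the composite driver $f_g(X,s,y,z,u)=f(X,s,y,z,[g(u)]_\nu)$ fits that lemma's globally Lipschitz framework. Your proposal therefore does strictly more than the paper, and it does it correctly: the Cauchy--Schwarz bound showing that $u\mapsto[g(u)]_\nu$ is Lipschitz from $\Ltwo(\nu)$ to $\R$ with constant $\|g'\|_\infty\|g_1\|_{\Ltwo(\nu)}$ is precisely the reduction that legitimises the citation, and the Picard scheme in an exponentially weighted norm, closed via the predictable representation of square-integrable $(\ftn_t)$-martingales against $M$ (valid because $\ftn=\ftn_T$ is the completed $\sigma$-algebra generated by the L\'evy process; the paper itself invokes this representation in the proof of Theorem \ref{diffthm}), is the standard argument and in substance the one behind the cited lemma and behind Theorem \ref{continuitythm}. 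The obstacle you single out is not really one: by the It\^o isometry $\E\bigl(\int_{{]0,T]}\times\R_0}U_s(x)\,\tilde N(ds,dx)\bigr)^2=\E\int_0^T\|U_s\|^2_{\Ltwo(\nu)}\,ds$, the $\Ltwo(\nu)$-Lipschitz dependence on $u$ enters the contraction estimate through exactly the same mechanism as the $z$-dependence, so a single weight $\kappa$ depending only on $L_f(1+\|g'\|_\infty\|g_1\|_{\Ltwo(\nu)})$ works. The one place where you are slightly too quick is the base point: $f_g(X,s,0,0,\mathbf 0)=f(X,s,0,0,g(0)\int_{\R_0}g_1\,d\nu)$, and under (${\A}_f$g) alone ($g_1\in\Ltwo(\nu)$ with $\nu$ possibly infinite) the constant $g(0)\int_{\R_0}g_1\,d\nu$ need not be finite unless $g(0)=0$ or $g_1\in\mathrm{L}_1(\nu)$; this is a thinness in the paper's hypotheses rather than an error of yours, and once that constant is finite your deduction of $\E\int_0^T|f_g(X,s,0,0,\mathbf 0)|^2\,ds<\infty$ from (${\A}_f$b) and (${\A}_f$c) is correct.
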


We cite the stability result of Barles,  Buckdahn and Pardoux (\cite{bbp}) comparing the distance between solutions to the BSDE (\ref{beq2}) with different
terminal conditions and generators.

\begin{theorem}[\cite{bbp}, Proposition 2.2] \label{continuitythm}
Assume that  $(\xi,f_g)$ and $(\xi',f'_g)$  satisfy $\xi,\xi'\in\Ltwo$ and suppose the generators fulfill  (${\A}_f$\ref{f-meas}-\ref{f-Lip}), while  $g$  is Lipschitz and  $g_1\in\mathrm{L}_2(\R_0 , \mathcal{B}(\R_0),\nu).$ Then there exists a constant $C>0$ such that for the corresponding solutions
 $(Y,Z,U)$ and $(Y',Z',U')$
to (\ref{beq2}) it holds
\equa
&&  \|Y-Y'\|_{S}^2+ \|Z-Z'\|_{ \Ltwo(W)}^2  +  \|U-U'\|_{ \Ltwo(\tilde{N})}^2  \\
&\le & C\bigg ( \|\xi-\xi'\|_{\Ltwo}^2
  +\int_0^T\|f_g\left(X,s,Y_{s}, Z_s, U_s \right)-f'_g\left(X,s,Y_{s}, Z_s, U_s  \right)\|_{\Ltwo}^2 ds \bigg).
\tion
\end{theorem}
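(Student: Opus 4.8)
The plan is to prove the estimate by the classical \emph{a priori}/stability argument for BSDEs: apply It\^o's formula to the squared difference of the $Y$-components and absorb the martingale and driver contributions. Write $\delta\xi := \xi-\xi'$, $\delta Y := Y-Y'$, $\delta Z := Z-Z'$ and $\delta U := U-U'$, so that subtracting the two instances of \eqref{beq2} yields
\[
\delta Y_t = \delta\xi + \int_t^T \Delta_s\,ds - \int_t^T \delta Z_s\,dW_s - \int_{{]t,T]}\times\R_0}\delta U_s(x)\,\tilde N(ds,dx),
\]
with $\Delta_s := f_g(X,s,Y_s,Z_s,U_s)-f'_g(X,s,Y'_s,Z'_s,U'_s)$. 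The key is the splitting $\Delta_s = \Delta f_s + R_s$, where $\Delta f_s := f_g(X,s,Y_s,Z_s,U_s)-f'_g(X,s,Y_s,Z_s,U_s)$ is the driver difference evaluated at the \emph{first} solution (exactly the term on the right-hand side of the claimed bound) and $R_s := f'_g(X,s,Y_s,Z_s,U_s)-f'_g(X,s,Y'_s,Z'_s,U'_s)$ carries the dependence on the solutions.

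First I would control $R_s$ via the Lipschitz property (${\A}_f$\ref{f-Lip}) of $f'$ together with a Lipschitz estimate for the nonlocal argument: since $g$ is Lipschitz and $g_1\in\Ltwo(\nu)$, Cauchy-Schwarz gives $|[g(U_s)]_\nu-[g(U'_s)]_\nu|\le \|g\|_{\mathrm{Lip}}\,\|g_1\|_{\Ltwo(\nu)}\,\|\delta U_s\|_{\Ltwo(\nu)}$, hence $|R_s|\le C\big(|\delta Y_s|+|\delta Z_s|+\|\delta U_s\|_{\Ltwo(\nu)}\big)$ with $C$ depending only on $L_f$, $\|g\|_{\mathrm{Lip}}$ and $\|g_1\|_{\Ltwo(\nu)}$. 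Then I would apply the It\^o formula for semimartingales with jumps to $|\delta Y_s|^2$ on $[t,T]$ and take expectations; by the $\Ltwo$-integrability of the solutions the integrals against $dW$ and $\tilde N$ are true martingales and drop out, while the compensator of $\int|\delta U_s(x)|^2 N(ds,dx)$ contributes $\E\int_t^T\|\delta U_s\|_{\Ltwo(\nu)}^2\,ds$, leaving
\[
\E|\delta Y_t|^2 + \E\int_t^T|\delta Z_s|^2\,ds + \E\int_t^T\|\delta U_s\|_{\Ltwo(\nu)}^2\,ds = \E|\delta\xi|^2 + 2\,\E\int_t^T \delta Y_{s-}\,\Delta_s\,ds.
\]

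Using $\Delta_s=\Delta f_s+R_s$, the bound on $R_s$, and Young's inequality $2ab\le \tfrac1\vare a^2+\vare b^2$, I would absorb the $|\delta Z_s|^2$ and $\|\delta U_s\|_{\Ltwo(\nu)}^2$ contributions (which carry the small factor $\vare$) into the left-hand side and estimate $2\delta Y_{s-}\Delta f_s\le |\delta Y_s|^2+|\Delta f_s|^2$. This produces
\[
\E|\delta Y_t|^2 + \half\E\int_t^T|\delta Z_s|^2\,ds + \half\E\int_t^T\|\delta U_s\|_{\Ltwo(\nu)}^2\,ds \le \E|\delta\xi|^2 + \E\int_t^T|\Delta f_s|^2\,ds + C\E\int_t^T|\delta Y_s|^2\,ds,
\]
after which Gronwall's lemma applied to $t\mapsto\E|\delta Y_t|^2$ delivers a uniform-in-$t$ bound on $\E|\delta Y_t|^2$ and, feeding it back, the $\Ltwo(W)$- and $\Ltwo(\tilde N)$-bounds (recalling $\E\int_0^T\|\delta U_s\|_{\Ltwo(\nu)}^2\,ds=\|\delta U\|_{\Ltwo(\tilde N)}^2$).

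To upgrade the $Y$-bound to the $\mathcal{S}_2$-norm $\E\sup_t|\delta Y_t|^2$, I would return to the pre-expectation It\^o identity, take the supremum over $t$, and apply the Burkholder-Davis-Gundy inequality to the two martingale terms; the arising $\E\sup_t|\delta Y_t|^2$ on the right is absorbed with a small factor, and the remaining brackets $\E\int_0^T|\delta Z_s|^2\,ds$ and $\E\int_0^T\|\delta U_s\|_{\Ltwo(\nu)}^2\,ds$ are already controlled. I expect the main technical obstacle to be the bookkeeping of the jump terms in It\^o's formula — in particular confirming that the compensator of $\int|\delta U_s(x)|^2 N(ds,dx)$ reproduces precisely the $\|\delta U\|_{\Ltwo(\tilde N)}^2$ term and that every stochastic integral is a genuine martingale so its expectation vanishes; the nonlocal driver argument $[g(\cdot)]_\nu$ is handled cleanly once the Lipschitz estimate above is in place.
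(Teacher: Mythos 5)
Your argument is correct and is essentially the proof of the cited result: the paper offers no proof of this theorem, attributing it to Barles--Buckdahn--Pardoux (Proposition 2.2), and the standard It\^o--Young--Gronwall--BDG stability argument you outline, with the driver difference split at the first solution and the nonlocal term $[g(\cdot)]_\nu$ handled by Cauchy--Schwarz, is exactly the argument given there. The only point to make explicit is the routine localization showing the stochastic integrals in the It\^o identity are true martingales, which you already flag.
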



\bigskip
We state now the result about the Malliavin derivative of solutions to BSDEs.  For the proof  we apply It\^o's formula like in the original work due to  Pardoux and Peng  \cite{PP1}  or in Ankirchner et al. \cite{AnImRe}.
The benefit is that one does not need any higher moment conditions on the data than $\Ltwo.$ Hence this result is a generalization of  El Karoui et al. \cite[Theorem 5.3]{ElKarouiPengQuenez}. 
It is also more general than  \cite[Theorem 3.5.1]{Delong} of Delong: For example, we do not require a canonical L\'evy space, the L\'evy process does not need to be square integrable, and   the generator in \eqref{beq2} allows some nonlinear structure w.r.t. $U_s(x)$   thanks  to the function $g$.

\begin{theorem}\label{diffthm}
Assume (${\A}_\xi$) and (${\A}_f$).  Then the following assertions hold.
\begin{enumerate}[(i)]
\item \label{alef}  For $\m$- a.e. $(r,v) \in [0,T]\times \R$  there exists a unique solution $(\mathcal{Y}^{r,v},\mathcal{Z}^{r,v}, \mathcal{U}^{r,v}) $ $\in  \mathcal{S}_2\times \Ltwo(W) \times \Ltwo(\tilde{N})$ to the BSDE
\equal  \label{U-V-equation}
\mathcal{Y}^{r,v}_t &=& \D_{r,v} \xi + \int_t^T F_{r,v}\kla s, \mathcal{Y}^{r,v}_s, \mathcal{Z}^{r,v}_s,  \mathcal{U}^{r,v} _s\mer ds  \non\\ &&
  -\int_{{]t,T]}\times\R} \underline{\mathcal{Z}}^{r,v}_{s,x}M(ds,dx), \quad 0\le r \le t \le T \non \\
  \mathcal{Y}^{r,v}_s &=& \mathcal{Z}^{r,v}_s =   \mathcal{U}^{r,v}_s = 0,    \quad   0\le s< r\le T ,
\tionl
where
\equa
\underline{\mathcal{Z}}^{r,v}_{s,x} := \left \{ \begin{array}{ll} \mathcal{Z}^{r,v}_s, &x=0 \\
 \mathcal{U}^{r,v} _s(x), &x\neq 0,
\end{array} \right .
\tion
and
\equa
 &&\les  F_{r,v} (s, y,z,u)\!:=\! \left\{ \begin{array}{ll}
\!\!\!\!(\D_{r,0} f_g)\left(X, s, Y_s,Z_s,U_s\right) & \smallskip\\
\!\!\!\!+\!\left\langle\nabla f\!\left(X,s,Y_s, Z_s, [g(U_s)]_\nu \right)\!, (y, z, [g'(U_s) u  ]_\nu \, )\right\rangle\!, &\!\!\! v=0 \\
 & \\
\!\!\!\!f_g\!\left(X+v \one_{[r,T]},s,Y_s+y, Z_s+z, U_s + u
 \right) & \smallskip\\
\!\!\quad\quad -f_g\left(X,s,Y_s, Z_s, U_s\right)\!,  &\!\!\! v\neq 0, \\
\end{array} \right.
\tion
with $\nabla =(\partial_{\eta_1}, \partial_{\eta_2}, \partial_{\eta_3}).$

\item \label{bet} For the solution $(Y,Z,U)$ of \eqref{beq2} it holds
\equal \label{Y-and-Z-in-D12}
Y, Z   \in \Ltwo([0,T];\DD), \quad U\in \Ltwo([0,T]\times\R_0;\DD),
\tionl
and $\D_{r,y}Y$ admits a c\`adl\`ag version for $\m$- a.e. $(r,y) \in  [0,T]\times\R.$
\item \label{gimmel} $(\D Y,\D Z, \D U)$ is a version of $(\mathcal{Y},\mathcal{Z}, \mathcal{U}),$  i.e. for $\m$- a.e. $(r,v)$ it solves
\begin{align} \label{diffrep}
\D_{r,v}Y_t = &\D_{r,v}\xi+\int_t^T F_{r,v}\left(s,\D_{r,v}Y_s, \D_{r,v}Z_s,\D_{r,v} U_s \right)ds \\
&-\int_t^T \D_{r,v}Z_sdW_s-\int_{{]t,T]}\times{\R_0}} \D_{r,v} U_{s}(x)\tilde{N}(ds,dx), \quad 0\le r \le t \le T. \non
\end{align}
\item \label{dalet} Setting  $D_{r,v}Y_r(\omega):= \lim_{t\searrow r}\D_{r,v}Y_t(\omega) $
for all $(r,v,\omega)$ for which  $ \D_{r,v}Y$ is c\`adl\`ag and $\D_{r,v}Y_r(\omega):=0$ otherwise, we have

\begin{align*} {\phantom{\int}}^p\left(\left(D_{r,0}Y_r\right)_{r\in{[0,T]}}\right) &\text{ is a version of  }(Z_{r})_{r\in{[0,T]}},\\
{\phantom{\int}}^p\left(\left(D_{r,v}Y_r\right)_{r\in{[0,T]}, v\in\R_0}\right) &\text{ is a version of  }(U_{r}(v))_{r\in{[0,T]}, v\in\R_0}.
\end{align*}

\end{enumerate}

\end{theorem}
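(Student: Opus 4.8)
The plan is to build $(Y,Z,U)$ as the limit of Picard iterates, to prove by induction that each iterate is Malliavin differentiable with derivative solving a linear BSDE, and then to pass to the limit using $n$-uniform a priori estimates together with the closedness of $\D$; the jump directions $\D_{r,v}$, $v\neq 0$, and the Brownian direction $\D_{r,0}$ are handled by the two chain rules, Lemma \ref{functionallem1} and Theorem \ref{Mall-diff-thm}, and recombined through \eqref{D-intersection}. I would first dispose of (i): for fixed $(r,v)$ with $v\neq 0$ the driver $F_{r,v}$ is a difference of the Lipschitz generator $f_g$ at two shifted arguments, while for $v=0$ it is affine in $(y,z,u)$ with coefficients bounded by $L_f$ and $\|g'\|_\infty$; in both cases $F_{r,v}$ satisfies (${\A}_f$\ref{f-Lip}) with a constant independent of $(r,v)$. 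By Lemma \ref{functionallem} the free term $F_{r,v}(s,0,0,0)$ equals $(\D_{r,v}f_g)(s,Y_s,Z_s,U_s)$, hence is dominated by $\Gamma_{r,v}$ from (${\A}_f$\ref{f-Mall-diff-bounded}); since $\Gamma\in\Ltwo(\PP\otimes\m)$ and $\D_{r,v}\xi\in\Ltwo$ for $\m$-a.e. $(r,v)$, Theorem \ref{existence} yields the unique solution $(\Y^{r,v},\Z^{r,v},\mathcal{U}^{r,v})$ for $\m$-a.e. $(r,v)$, with joint measurability secured by the predictable-projection versions recalled in Section 2.

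For (ii) and (iii) I would set $Y^0\equiv Z^0\equiv U^0\equiv0$ and let $(Y^{n+1},Z^{n+1},U^{n+1})$ solve \eqref{beq2} with the driver frozen at the previous iterate, so that $Y^{n+1}_t=\E[\xi+\int_t^T f_g(X,s,Y^n_s,Z^n_s,U^n_s)\,ds\mid\ftn_t]$; the stability estimate of Theorem \ref{continuitythm} gives $(Y^n,Z^n,U^n)\to(Y,Z,U)$ in $\mathcal{S}_2\times\Ltwo(W)\times\Ltwo(\tilde N)$. The inductive step is to differentiate the composition $(\omega,s)\mapsto f_g(X,s,Y^n_s,Z^n_s,U^n_s)=f(X,s,Y^n_s,Z^n_s,[g(U^n_s)]_\nu)$. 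Here I would reduce to the three-dimensional ($d=3$) chain rule by taking $G=(Y^n_s,Z^n_s,[g(U^n_s)]_\nu)$, noting that $[g(U^n_s)]_\nu\in\DD$ with $\D_{r,0}[g(U^n_s)]_\nu=[g'(U^n_s)\D_{r,0}U^n_s]_\nu$, which is exactly where (${\A}_f$\ref{g-condition}) is used. Assumptions (${\A}_f$\ref{f-yzu-diff})--(${\A}_f$\ref{f-Mall-diff-bounded}) furnish the $\mathcal{C}^1$-regularity, the Lipschitz continuity of $\D^W f$ in $\eta$ and the domination needed to meet the hypotheses of Theorem \ref{Mall-diff-thm} (for $v=0$) and Lemma \ref{functionallem1} (for $v\neq0$); the resulting derivative is precisely of the form of the driver in \eqref{U-V-equation}, but with its coefficients frozen at the $n$-th iterate. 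Thus $\D_{r,v}(Y^{n+1},Z^{n+1},U^{n+1})$ solves a linear BSDE with driver $F^n_{r,v}$ of the same type as \eqref{U-V-equation}.

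The decisive step is an $n$-uniform a priori bound obtained, as in Pardoux--Peng \cite{PP1}, by applying It\^o's formula to $|\D_{r,v}Y^{n+1}_t|^2$ and invoking Gronwall, namely
\[ \E\sup_{t\in[0,T]}|\D_{r,v}Y^{n+1}_t|^2 + \E\int_0^T\!\big(|\D_{r,v}Z^{n+1}_s|^2+\|\D_{r,v}U^{n+1}_s\|_{\Ltwo(\nu)}^2\big)ds \le C\Big(\E|\D_{r,v}\xi|^2+\E\!\int_0^T\!\Gamma_{r,v}^2\,ds\Big) \]
with $C$ independent of $n$ and $(r,v)$; integrating against $\m$ and using $\xi\in\DD$ and $\Gamma\in\Ltwo(\PP\otimes\m)$ bounds the $\DD$-norms of the iterates uniformly in $n$. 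I would then show $\D_{r,v}(Y^n,Z^n,U^n)\to(\Y^{r,v},\Z^{r,v},\mathcal{U}^{r,v})$ in the $\m$-integrated norms of \eqref{Y-and-Z-in-D12} by applying Theorem \ref{continuitythm} to the linear BSDEs and letting the frozen driver $F^n_{r,v}$ converge to $F_{r,v}$; this last convergence rests on $(Y^n,Z^n,U^n)\to(Y,Z,U)$, the continuity of $\nabla f$ from (${\A}_f$\ref{f-yzu-diff}), the Lipschitz continuity of $\D^W f$, and dominated convergence through $\Gamma$. Since $\D$ is closed on $\Ltwo(\PP)$, the convergence of the iterates together with the uniform control of their derivatives gives $Y_t,Z_s,U_s(x)\in\DD$ and $\D(Y,Z,U)=(\Y,\Z,\mathcal{U})$, proving (ii) and (iii), while the c\`adl\`ag version of $\D_{r,v}Y$ is inherited from that of $\Y^{r,v}$. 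I expect the main obstacle to be exactly this limit passage under only $\Ltwo$ data: the hypotheses of Theorem \ref{Mall-diff-thm} must be verified for every iterate and must survive the limit, and the driver convergence must be controlled without the fourth-moment assumptions of \cite[Theorem 5.3]{ElKarouiPengQuenez} -- this is where the It\^o-formula estimate and the domination $\Gamma\in\Ltwo(\PP\otimes\m)$ do the work.

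For (iv) I would argue by Clark--Ocone. Setting $N_t:=\E[\xi+\int_0^T f_g(X,s,Y_s,Z_s,U_s)\,ds\mid\ftn_t]=Y_t+\int_0^t f_g\,ds$, martingale representation exhibits $\underline Z_{s,x}$ as the integrand of $N$, which Clark--Ocone identifies with the predictable projection ${}^p(\D_{s,x}N_T)$. Since $\D_{r,v}$ annihilates the $\ftn_s$-measurable driver values for $s<r$, differentiating $N_T$ and comparing with \eqref{diffrep} at $t=r$ gives $\D_{r,v}N_T=\D_{r,v}Y_r+\int_{]r,T]\times\R}\D_{r,v}\underline Z_{s,x}M(ds,dx)$. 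Taking predictable projections in $r$ annihilates the stochastic-integral term, whence ${}^p(D_{r,v}Y_r)={}^p(\D_{r,v}N_T)=\underline Z_{r,v}$, which reads $Z_r$ for $v=0$ and $U_r(v)$ for $v\neq0$, as claimed.
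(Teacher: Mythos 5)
Your proposal is correct and, for parts (i)--(iii), follows essentially the same route as the paper: Picard iteration, Malliavin differentiation of each iterate via Lemma \ref{functionallem1} and Theorem \ref{Mall-diff-thm} applied to $G=(Y^n_s,Z^n_s,[g(U^n_s)]_\nu)$, an It\^o-formula/Gronwall a priori bound uniform in $n$, convergence of the frozen drivers, and closedness of $\D$. Two remarks. First, in the limit passage for $v\neq 0$ your ``dominated convergence through $\Gamma$'' hides a step that the paper makes explicit and that is genuinely needed when $\nu(\R_0)=\infty$: the crude Lipschitz bound on the driver difference contains the term $|Y_s-Y^n_s|+|Z_s-Z^n_s|+\|U_s-U^n_s\|_{\Ltwo(\nu)}$, which is constant in $v$ and hence not $\nu$-integrable; the paper splits the jump domain into $\{|v|<\alpha\}$ (where the $\Gamma$-domination from \eqref{generator-estimate}--\eqref{Dgenerator-estimate} yields an $\varepsilon$) and $\{|v|\ge\alpha\}$ (where $\nu$ is finite and $(Y^n,Z^n,U^n)\to(Y,Z,U)$ can be used). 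You should make this splitting explicit; without it the driver convergence does not follow. Second, for (iv) you argue via Clark--Ocone applied to $N_T=\xi+\int_0^T f_g\,ds$, whereas the paper instead Malliavin-differentiates the BSDE on $\{t<r\}$ (where $\D_{r,v}Y_t=0$) to obtain \eqref{the-other-D-equation} and compares it with \eqref{diffrep}; the two derivations produce the same identity ${}^p(D_{r,v}Y_r)=\underline{Z}_{r,v}$, yours buying a slightly cleaner bookkeeping at the cost of invoking a L\'evy Clark--Ocone formula, while the paper's stays within the differentiation rules for stochastic integrals already used in Step 1. Either way one still needs the joint measurability of $(r,v,\omega)\mapsto\lim_{t\searrow r}\D_{r,v}Y_t$ (the paper gets it from Stricker--Yor) before the predictable projection can be taken.
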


\bigskip

We present an example  of  a FBSDE where we specify the dependence on $\om$ in the  generator by a forward process  such that (${\A}_f$\ref{f-Mall-diff-bounded}) holds.
\begin{example}
 Consider the case of a L\'evy process $X$ such that $\E|X_t|^2<\infty$ for all $t\in{[0,T]}$.   Assume the generator to be of the type $$f(s,\omega,y,z,u)=\tilde f(s, \Psi_s(\omega),y,z,u),$$ 
with $\tilde f$ having a  continuous partial derivative in the second variable bounded by $K$. Assume further that this partial derivative is locally Lipschitz in $(y,z,u)$. Let $\Psi$ denote a forward process given by the SDE
$$d\Psi_s=b(\Psi_s)ds+\sigma(\Psi_s)dW_s+\beta(\Psi_{s-},x)\tilde{N}(ds,dx)$$
with $\Psi_0 \in \R.$
Then conditions (${\A}_f$\ref{f-Mall-diff-Lip}), (${\A}_f$\ref{f-Mall-diff-bounded}) are satisfied under the requirements
\begin{enumerate}
\item[(i)] The functions $b\colon\R\to\R$ and $\sigma\colon\R\to\R$ are continuously differentiable with bounded derivative.
\item[(ii)] $\beta\colon\R\times\R_0\to\R$ is measurable, satisfies
 \begin{align*}
&\left|\beta(\psi,x)\right|\leq C_\beta(1\wedge|x|),\quad (\psi,x)\in\R\times\R_0, \\
& \left|\beta(\psi,x)  -  \beta(\hat \psi,x)  \right|\leq C_\beta | \psi-\hat\psi|  (1\wedge|x|),\quad (\psi,x), (\hat\psi,x)   \in\R\times\R_0, 
\end{align*}
and is continuously differentiable in $\psi$ 
for fixed $x\in\R_0.$
\end{enumerate}
This follows, since $\left(\D_{s,x}f\right)(t,G)$ is given by
$$\left(\D_{s,x}f\right)(t,G)=\begin{cases}\tilde{f}_\psi(t,\Psi_t,G)\D_{s,x}\Psi_t, & x=0,\\
\tilde{f}(t,\Psi_t+\D_{s,x}\Psi_t,G)-\tilde{f}(t,\Psi_t,G), & x\neq 0, \end{cases}$$
implying
$$\left|\left(\D_{s,x}f\right)(t,G)\right|< K\left|\D_{s,x}\Psi_t\right|.$$
Theorem \cite[Theorem 4.1.2]{Delong} states that under the above conditions on $b,\sigma$ and $\beta$,  $$\sup_{r,v}\E\sup_{s\in{[0,T]}}\left|\frac{\D_{r,v}\Psi_s}{v}\right|^2<\infty,$$
and refers to \cite[Theorem 3]{Petrou} for a proof. 
Thus, to satisfy (${\A}_f$\ref{f-Mall-diff-bounded}),  we may choose $\Gamma=C\sup_{s\in{[0,T]}}\left|\D\Psi_s\right|$, where $C$ depends on $K,C_\beta$ and the Lipschitz constants for $b,\sigma$ and $\beta$.

\end{example}

\subsection{Proof of  Theorem \ref{diffthm}}\label{4}

  Let us start with a lemma providing estimates for the Malliavin derivative of the generator.

\begin{lemma}\label{destimlem}
Let $G=(G_1,G_2,G_3) \in (\Ltwo)^3$ and $\Phi\in(\Ltwo(\PP\otimes\m))^3$. If $f$ satisfies (${\A}_f$)
it holds for $\mathbb{P} \otimes \m$-a.a. $(\omega,r,v), v \neq 0,$ that
\equal \label{generator-estimate}
&& \les \bigg |f(X+v\one_{[r,T]},t,G+\Phi_{r,v})
-f\left(X,t,G\right)\bigg | \le   L_f \left|\Phi_{r,v}\right|+ \Gamma_{r,v}.
\tionl
Moreover, for  $G \in (\DD)^3$
it holds   $f(X,t,G) \in \DD$ and
\equal \label{Dgenerator-estimate}
  |\D_{r,v}f\left(X,t,G\right)|   \le   L_f \left|D_{r,v} G\right| +\Gamma_{r,v}, \quad \mathbb{P}\otimes \m \text{-a.e.}
\tionl
\end{lemma}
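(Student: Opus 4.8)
The plan is to prove the two bounds by applying the Lipschitz structure of $f$ from $(\A_f\ref{f-Lip})$ together with the Malliavin-derivative representations established earlier. First I would treat the estimate \eqref{generator-estimate} for $v \neq 0$. By Corollary \ref{kept-almost-sure}\eqref{corA}, the function $\eta \mapsto f(X+v\one_{[r,T]},t,\eta)$ is $\PP\otimes\m$-a.e.\ Lipschitz in $\eta$ with the same constant $L_f$, since $(\A_f\ref{f-Lip})$ guarantees this $\PP$-a.s.\ for $\eta \mapsto f(X,t,\eta)$ and the exceptional set is preserved under the shift $X \mapsto X+v\one_{[r,T]}$ by Lemma \ref{sprungaddthm}. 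I would then split
\begin{align*}
\bigl|f(X+v\one_{[r,T]},t,G+\Phi_{r,v}) - f(X,t,G)\bigr|
&\le \bigl|f(X+v\one_{[r,T]},t,G+\Phi_{r,v}) - f(X+v\one_{[r,T]},t,G)\bigr| \\
&\quad + \bigl|f(X+v\one_{[r,T]},t,G) - f(X,t,G)\bigr|,
\end{align*}
bounding the first term by $L_f |\Phi_{r,v}|$ via the shifted Lipschitz property and recognizing the second term: by Lemma \ref{functionallem} applied to $\xi = f(X,t,G)$ (with $G$ inserted for the parameter), the difference $f(X+v\one_{[r,T]},t,G)-f(X,t,G)$ is precisely $(\D_{r,v}f)(t,G)$, which is bounded by $\Gamma_{r,v}$ according to $(\A_f\ref{f-Mall-diff-bounded})$. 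Combining yields \eqref{generator-estimate}.

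Next I would establish that $f(X,t,G)\in\DD$ for $G\in(\DD)^3$, which is needed before \eqref{Dgenerator-estimate} even makes sense. Here the route is through the decomposition \eqref{D-intersection}, $\DD = \mathbb{D}_{1,2}^0 \cap \mathbb{D}_{1,2}^{\R_0}$, verifying membership in each factor separately. For $\mathbb{D}_{1,2}^{\R_0}$ I would invoke Lemma \ref{functionallem1}: its hypotheses hold because $f(\cdot,t,y)\in\mathbb{D}_{1,2}^{\R_0}$ for each $y$ (from $(\A_f\ref{f-Mall-diff-Lip})$, since $\DD \subseteq \mathbb{D}_{1,2}^{\R_0}$), $f(\cdot,t,\cdot)$ is continuous by $(\A_f\ref{f-Lip})$, and the $\Ltwo(\PP\otimes\m)$-integrability of the jump-derivative expression \eqref{jump-derivative} follows from the just-proven bound \eqref{generator-estimate} with $\Phi_{r,v}=\D_{r,v}G$, since $\Gamma\in\Ltwo(\PP\otimes\m)$ and $\D_{r,v}G\in(\Ltwo(\PP\otimes\m))^3$. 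For $\mathbb{D}_{1,2}^0$ I would appeal to Theorem \ref{Mall-diff-thm}, checking its conditions \eqref{C-one}--\eqref{C-four} against $(\A_f\ref{f-Lip})$, $(\A_f\ref{f-yzu-diff})$, $(\A_f\ref{f-Mall-diff-Lip})$ and $(\A_f\ref{f-Mall-diff-bounded})$; in particular \eqref{C-three} translates to $(\A_f\ref{f-Mall-diff-Lip})$ and the $\Ltwo$-membership of the chain-rule terms in \eqref{C-four} again follows from the generator bounds.

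Finally, \eqref{Dgenerator-estimate} follows by combining the two cases $v=0$ and $v\neq0$ of the Malliavin derivative. For $v\neq0$ the representation \eqref{xieq} gives $\D_{r,v}f(X,t,G) = f(X+v\one_{[r,T]},t,G+\D_{r,v}G)-f(X,t,G)$, so \eqref{generator-estimate} with $\Phi_{r,v}=\D_{r,v}G$ directly yields the bound $L_f|D_{r,v}G|+\Gamma_{r,v}$. For $v=0$ I would use the chain rule from Theorem \ref{Mall-diff-thm}, which expresses $\D_{r,0}f(X,t,G)$ as $(\D_{r,0}f)(t,G)+\nabla f(X,t,G)\cdot\D_{r,0}G$; the first summand is bounded by $\Gamma_{r,0}$ via $(\A_f\ref{f-Mall-diff-bounded})$, and the gradient term is controlled by $L_f|\D_{r,0}G|$ because the Lipschitz constant $L_f$ from $(\A_f\ref{f-Lip})$ bounds the partial derivatives $\partial_{\eta_i}f$ pointwise. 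I expect the main obstacle to lie not in the estimates themselves, which are essentially triangle-inequality arguments, but in carefully justifying that the various a.e.\ statements (the preservation of Lipschitz continuity under the shift, the measurable selection of derivative versions, and the insertion of the random vector $G$ into parametrized objects) are compatible across the common $\PP\otimes\m$-null exceptional set, so that the final inequalities hold genuinely $\PP\otimes\m$-a.e.
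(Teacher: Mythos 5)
Your proposal is correct and follows essentially the same route as the paper's proof: the triangle-inequality split combined with Corollary \ref{kept-almost-sure} and the identification of $f(X+v\one_{[r,T]},t,G)-f(X,t,G)$ with $(\D_{r,v}f)(t,G)$ bounded by $\Gamma_{r,v}$ gives \eqref{generator-estimate}, and then Lemma \ref{functionallem1}/Lemma \ref{functionallem} for $v\neq 0$ together with Theorem \ref{Mall-diff-thm} for $v=0$ give membership in $\DD$ and the bound \eqref{Dgenerator-estimate}. Your version is somewhat more explicit than the paper's about verifying the hypotheses of Lemma \ref{functionallem1} and Theorem \ref{Mall-diff-thm}, but the substance is identical.
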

\begin{proof}
According to Corollary  \ref{kept-almost-sure}  we may replace $X$ by  $X+v\one_{[r,T]}$ and use the  Lip\-schitz property (${\A}_f \ref{f-Lip}$) to estimate
$$ \big | f(X+v\one_{[r,T]},t,G+\Phi_{r,v}) - f(X+v\one_{[r,T]},t,G)  \big | \le    L_f \left|\Phi_{r,v}\right|$$
for  $\mathbb{P}\otimes \m $-a.e. $(\om, r,v)$ with $v\neq 0.$
From (${\A}_f \ref{f-Mall-diff-bounded}$) one concludes then \eqref{generator-estimate}.

For $v \neq 0$  we  conclude from Lemma \ref{functionallem1} that  $\D_{r,v}f\left(X,t,G\right)   \in   \mathbb{D}_{1,2}^{\R_0}$ and  apply Lemma \ref{functionallem} to get $$\D_{r,v}f\left(X,t,G\right)=  f(X+v\one_{[r,T]},t,G+\D_{r,v}G)     -  f\left(X,t,G\right),$$
and hence \eqref{Dgenerator-estimate} follows from \eqref{generator-estimate}.
 In the case of $v=0$, by assumption (${\A}_f \ref{f-Mall-diff-Lip}$)    we may apply Theorem \ref{Mall-diff-thm}. Thus we get the Malliavin derivative
\equal \label{D0generator}
\D_{r,0}f(X,t,G) &=&   (\D_{r,0}f)(t,G) + \partial_{\eta_1}f (X,t,G)\D_{r,0} G_1 \non \\
&& +   \partial_{\eta_2}f(X,t,G)\D_{r,0} G_2  +\partial_{\eta_3} f(X,t,G)\D_{r,0} G_3
\tionl
for $\mathbb{P}\otimes \lambda$ a.a. $(\omega,r)\in \Omega\times{[0,T]}.$ Relation \eqref{Dgenerator-estimate} follows from conditions (${\A}_f \ref{f-Lip}$)
 and  \eqref{f-Mall-diff-bounded} using
that the partial derivatives are bounded by $L_f$.
\end{proof}

{\bf Proof of Theorem \ref{diffthm}.}
The core of the proof is to conclude assertion \eqref{bet} which will be done by an iteration argument.
To simplify the notation we do not  mention the dependency of $f$ on $X$ in most places.  \bigskip

\eqref{alef} For those $(r,v)$ such that  $\D_{r,v}\xi \in \Ltwo$  the existence and uniqueness of a solution $(\mathcal{Y}^{r,v},\mathcal{Z}^{r,v},\mathcal{U}^{r,v})$ to \eqref{U-V-equation} follows from Theorem \ref{existence} since $F_{r,v}$ meets the assumptions of the theorem.\\
\eqref{bet} By  Theorem \ref{continuitythm} the  solution depends continuously on the terminal condition and  $\D\xi$ is measurable  w.r.t.~$(r,v).$
We infer the measurable dependency  $(r,v) \mapsto(\mathcal{Y}^{r,v},\mathcal{Z}^{r,v},\mathcal{U}^{r,v})$  as follows:  Since by  Theorem \ref{continuitythm}  the mapping
\[
\Ltwo \to  \mathcal{S}_2\times \Ltwo(W) \times \Ltwo(\tilde{N})  \colon \xi \mapsto (Y,Z,U)
\]
is continuous one can show the existence of a jointly measurable version of
 \[
 (\mathcal{Y}^{r,v},\mathcal{Z}^{r,v}, \mathcal{U}^{r,v}), \quad  (r,v) \in [0,T]\times \R \]
  by approximating  $\D\xi$ with simple functions in $\Ltwo(\PP \otimes\m).$ Joint measurability (for example for $\mathcal{Z}$) in all arguments can be gained by identifying
  the spaces
$$\Ltwo(\lambda,\Ltwo(\mathbb{P}\otimes\m))\cong\Ltwo(\lambda \otimes\mathbb{P}\otimes\m).$$  The quadratic integrability with respect to $(r,v)$  also follows  from Theorem
\ref{continuitythm} since $\xi \in \DD .$ \bigskip

Using an iteration scheme, starting with $(Y^0,Z^0,U^0)=(0,0,0)$, we get $Y^{n+1}$ by taking the optional projection which implies that
\equal \label{Yn}
Y^{n+1}_t=\E _{t}\left(\xi+\int_t^T f_g\left(s,Y^n_{s}, Z^n_s, U_s^n\right)ds\right).
\tionl
 The process $\underline{Z}^{n+1}$ given by
\equa
\underline{Z}^{n+1}_{s,x} := \left \{ \begin{array}{ll} Z ^{n+1}_s, &x=0, \\
U^{n+1}_s(x), &x\neq 0,
\end{array} \right .
\tion
 one gets by the martingale representation theorem w.r.t. $M$ (see, for example,  \cite{Applebaum}):
\equal   \label{zndef}
\xi+\int_0^T f_g\left(s,Y^n_s, Z^n_s, U^n_s\right)ds&=&
\E \left(\xi+\int_0^T f_g \left(s,Y^n_{s}, Z^n_s, U^n_s\right)ds\right) \non \\
&&+\int_{{]0,T]}\times\R}\underline{Z}^{n+1}_{s,x}M(ds,dx).
\tionl

{\tt Step 1.} \,
It is well-known that  $(Y^n, Z^n, U^n)$  converges to the solution  $(Y,Z,U)$ in $\Ltwo(W)\times \Ltwo(W) \times \Ltwo(\tilde N).$ Our aim in this step
is to show that $Y^n, Z^n$ and $U^n$  are  uniformly bounded in $n$ as elements of   $\Ltwo(\lambda ; \DD)$ and $\Ltwo(\lambda\otimes \nu  ; \DD),$ respectively.
This will follow  from \eqref{boundedYn} below.   \\
 Given that $Y^n , Z^n \in \Ltwo( \lambda; \DD)$ and $U^n \in \Ltwo(\lambda\otimes \nu; \DD)$ one can infer that this also
 holds for $n+1\colon$   Indeed, (${\A}_f \ref{g-condition}$) implies that $[g(U^n_s)]_\nu \in \DD$ for a.e. $s$ and
 \equal \label{g-integral-est}
   \left | \D_{r,v} [g(U^n_s)]_\nu \right |  \le L_g \|g_1\|_{\Ltwo(\nu)}  \| \D_{r,v} U^n_s   \|_{\Ltwo(\nu)}.
  \tionl
 From Lemma  \ref{destimlem} we get that $f(X,s,Y^n_s, Z^n_s,  [g(U^n_s)]_\nu) \in \DD.$  The above estimate and \eqref{Dgenerator-estimate}  as well as the Malliavin differentiation rules shown by Delong and Imkeller in  \cite[ Lemma 3.1. and Lemma 3.2.]{ImkellerDelong}  imply that  $Y^{n+1}$ as defined in \eqref{Yn}   is   in   $\Ltwo( \lambda; \DD).$  Then we conclude from \eqref{zndef} and
 \cite[ Lemma 3.3.]{ImkellerDelong}  that $Z^{n+1} \in \Ltwo( \lambda; \DD)$ and $U^{n+1} \in \Ltwo(\lambda\otimes \nu; \DD).$
 Especially,  we get for $t\in{[0,T]}$ that  $\PP$ -a.e.
\equal  \label{D-of-y-n+1}
\D_{r,v}Y^{n+1}_t&=&\D_{r,v}\xi+\int_t^T \D_{r,v}f_g\left(X,s,Y^n_s, Z^n_s, U^n_s \right)ds  \non\\
&&-\int_{{]t,T]}\times\R}\D_{r,v}\underline{Z}^{n+1}_{s,x}M(ds,dx)   \text{ for } \m \text{ - } a.a. \,(r,v) \in [0,t]\times \R , \non \\
\D_{r,v}Y^{n+1}_t&=&0  \quad \text{ for } \m \text{ - } a.a. \,(r,v) \in (t,T]\times \R,  \non \\
\D_{r,v} \underline{Z}^{n+1}_{t,x}&=&0  \quad \text{ for } \m\otimes\mu \text{ - } a.a. \,(r,v,x) \in (t,T]\times \R^2.
\tionl

Since by \cite[Theorem 4.2.12] {Applebaum}  the process $\big (\int_{{]0,t]}\times\R}\D_{r,v}\underline{Z}^{n+1}_{s,x}M(ds,dx)\big)_{t\in [0,T]}$ admits a c\`adl\`ag version, we may take a c\`adl\`ag version of both sides. \\
By It{\^o}'s formula (see, for instance,  \cite{Applebaum}), we conclude that for $0< r<t$ it holds
\equa
e^{\beta T}(\D_{r,v}\xi)^2&=&e^{\beta t}(\D_{r,v}Y^{n+1}_t)^2\\
&&+\beta\int_t^T e^{\beta s}(\D_{r,v}Y^{n+1}_s)^2 ds \\
&&-2\int_t^T e^{\beta s}\big [\D_{r,v}f_g\left(X,s,Y^n_s,Z^n_s, U^n_s \right)\big]\D_{r,v}Y^{n+1}_sds\\
&&+\int_{{]t,T]}\times\R} e^{\beta s}[2(\D_{r,v}Y^{n+1}_{s-} )\D_{r,v}\underline{Z}^{n+1}_{s,x} \\
&& \quad \quad \quad   \quad \quad \quad   \quad          +  \one_{\R_0}(x)  ({\D_{r,v}\underline{Z}^{n+1}_{s,x}})^2 ]M(ds,dx)\\
&&+\int_{{]t,T]}\times\R}e^{\beta s}(\D_{r,v}\underline{Z}^{n+1}_{s,x})^2ds\mu(dx) \quad \mathbb{P}\otimes\m\text{ - } a.e.
\tion

One easily checks that the integral w.r.t.~$M$ is a uniformly integrable martingale and hence has expectation zero. Therefore, using \eqref{D-of-y-n+1}, we have for $0< u<t \le T$ that
\equal  \label{expecteditody2}
&& \les\E e^{\beta t}(\D_{r,v}Y^{n+1}_t)^2+\E\int_{{]r,T]}\times\R}e^{\beta s}(\D_{r,v}\underline{Z}^{n+1}_{s,x})^2ds\mu(dx)  \non\\
&\le&
e^{\beta T }\E(\D_{r,v}\xi)^2+ 2\int_r^T e^{\beta s}\E\left| \big [ \D_{r,v}f_g\left(X,s,Y^n_s,Z^n_s, U^n_s\right)\big]\D_{r,v}Y^{n+1}_s\right|ds  \non\\
&&-\beta\E\int_r^T e^{\beta s}(\D_{r,v}Y^{n+1}_s)^2 ds.
\tionl

 By Young's inequality, \eqref{g-integral-est} and
 Lemma  \ref{destimlem}  we get a constant $C_f$ such that for any $c>0$,
\equa
&&  \les 2 \left| \big [ \D_{r,v}f_g\left(X,s,Y^n_s,Z^n_s, U^n_s\right) \big ]\D_{r,v}Y^{n+1}_s\right|  \\
&  \le & c\left| \D_{r,v}Y^{n+1}_s\right|^2        +\frac{C_f}{c}  \Big(   \left|\Gamma_{r,v}\right|^2
+\left|\D_{r,v}Y^{n}_s\right|^2+\left|\D_{r,v}Z^n_s\right|^2  +  \| \D_{r,v} U^n_s   \|_{\Ltwo(\nu)}^2   \Big) \\
&  = & c\left| \D_{r,v}Y^{n+1}_s\right|^2        +\frac{C_f}{c}  \Big(   \left|\Gamma_{r,v}\right|^2
+\left|\D_{r,v}Y^{n}_s\right|^2 +   \int_{\R} \left| \D_{r,v} \underline{Z}^n_{s,x}  \right|^2  \mu(dx)   \Big). \\
\tion

Choosing $\beta=c+1$  and $c= 2 C_f$ leads to
\equa
&& \les \E\int_r^T e^{\beta s}\left|\D_{r,v}Y^{n+1}_s\right|^2 ds  +\E\int_{{]r,T]}\times\R}e^{\beta s}\left|\D_{r,v}\underline{Z}^{n+1}_{s,x}\right|^2\m(ds,dx)\\
&\leq& e^{\beta T}\E\left|\D_{r,v}\xi\right|^2+\frac{1}{2}\int_r^T e^{\beta s}ds \,\E \left|\Gamma_{r,v}\right|^2\\
&&+\frac{1}{2}\left(\E\int_r^T e^{\beta s}\left|\D_{r,v}Y^{n}_s\right|^2ds+\E\int_{{]r,T]}\times\R} e^{\beta s} \left|\D_{r,v}\underline{Z}^{n}_{s,x}\right|^2 \m(ds,dx)\right).
\tion
Finally, \eqref{D-of-y-n+1} and Lemma \ref{gronwalllemma2}  imply
\equal  \label{boundedYn}
&& \les \int_0^Te^{\beta s} \| \D Y^{n}_s \|_{L_2(\m\otimes \mathbb{P})}^2ds + \int_{{[0,T]}\times\R}e^{\beta s}\|\D \underline{Z}^{n}_{s,x}\|_{L_2(\m\otimes \mathbb{P})}^2\m(ds,dx) \non\\
 && \le c_{\beta} \| |\D\xi| +\Gamma  \|_{\Ltwo(\mathbb{P} \otimes \m)}^2 \text{ for all } n \in \N.
\tionl
\smallskip

{\tt Step 2.} \,
We now show that
\equal \label{diffnorm}
\left\|\mathcal{Y}-\D Y^{n+1}\right\|^2_{\mathrm{L}^2(\mathbb{P}\otimes\lambda\otimes\m)}+\left\|\underline{\mathcal{Z}}-\D \underline{Z}^{n+1}\right\|^2_{\mathrm{L}^2(\mathbb{P}\otimes(\m)^{\otimes 2})} \to 0, \quad n \to \infty.
\tionl

In order to estimate the expressions from \eqref {diffnorm} one can repeat the previous computations  for the difference $\mathcal{Y}^{r,v}_t-\D_{r,v} Y^{n+1}_t$ to obtain
\equal \label{expecteditoddiff}
&& \les \E\int_r^T e^{ \beta s}(\mathcal{Y}^{r,v}_s-\D_{r,v}Y^{n+1}_s)^2ds+\E\int_{{]r,T]}\times\R}e^{ \beta s}(\underline{\mathcal{Z}}^{r,v}_{s,x}-\D_{r,v}\underline{Z}^{n+1}_{s,x})^2ds\mu(dx) \non \\
&\le&  \frac{1}{c}\E\int_r^T e^{ \beta s}\left|F_{r,v}(s,\mathcal{Y}^{r,v}_s, \mathcal{Z}^{r,v}_s,  \mathcal{U}^{r,v} _s)-\D_{r,v}f_g(s,  Y^n_s,  Z^n_s, U^n_s ) \right|^2 ds.
\tionl
for any  $c>0.$ \\
For the case $v=0,$ by using Lipschitz properties of $f$ (which also imply the boundedness of the partial derivatives), we can find a constant $C_f'$  such that
\equal
\label{Fr0minusDr0f}
&&  \les \left|F_{r,0}(s,\mathcal{Y}^{r,0}_s, \mathcal{Z}^{r,0}_s, \mathcal{U}^{r,0}_s)-\D_{r,0}f_g(s,  Y^n_s,  Z^n_s, U^n_s )\right| \phantom{ \bigcup } \non \\
&\le& C_f'
 (\big | \mathcal{Y}^{r,0}_s   -\D_{r,0}Y^n_s \big | +  \big | \mathcal{Z}^{r,0}_s   -\D_{r,0}Z^n_s \big | + \big\| \mathcal{U}^{r,0}_s - \D_{r,0}U^n_s \big \|_{\Ltwo(\nu)} ) \non \\
&&+ \kappa_n(r,s)
\tionl
where for some $C>0$

\equal
 \kappa_n(r,s) &= &C \big (\big | (\D_{r,0}f_g)(s,Y_s, Z_s,  U_s)  - (\D_{r,0}f_g) (s,  Y^n_s,  Z^n_s, U^n_s ) \big |\wedge \Gamma_{r,0}\non \\
&&+ \big | \mathcal{Y}^{r,0}_s  \big |   \,  \big |\partial_y  f_g(s,Y_s, Z_s,  U_s)   -\partial_y f_g(s,  Y^n_s,  Z^n_s, U^n_s ) \big |  \non \phantom{\bigcup}  \\
&& +\big| \mathcal{Z}^{r,0}_s \big| \,  \big |\partial_z f_g(s,Y_s, Z_s,  U_s)    -\partial_z f_g(s,  Y^n_s,  Z^n_s, U^n_s )\big|  \non \phantom{\bigcup}\\
&& +\big\| \mathcal{U}^{r,0}_s \big \|_{\Ltwo(\nu)} (  \big |\partial_u  f_g(s,Y_s, Z_s,  U_s)   -\partial_u f_g(s,  Y^n_s,  Z^n_s, U^n_s ) \big|    \non \phantom{\bigcup}\\
&& \quad \quad +  \left \|  |g'(U_s) - g'(U^n_s)| g_1 \right \|_{\Ltwo(\nu)} ) \big).
\tionl
Since
the sequence  $( Y^n, Z^n, U^n  )$ converges in   $\Ltwo(W)\times \Ltwo(W) \times \Ltwo(\tilde N),$  condition  (${\A}_f \ref{f-Mall-diff-Lip}  $)  holds, and    $\partial_yf, \partial_zf,\partial_uf$ as well as $g'$ are bounded and continuous  it follows from  Vitali's convergence theorem that
\equal \label{delta-n-terms}
 \delta_n&:=& \E\int_r^T e^{\beta s}  \kappa_n(r,s)^2  dr
      ds \to 0  \,\,\text{ for } \,\, n\to \infty.
\tionl

Now we continue with the case $v\neq 0.$  We  first  realize  that  for a given $\vare >0$ we may choose  $\alpha >0$ small enough such that

 \equa
 \E\!\!\int_r^T\!\!\! \int_{\{|v| < \al\}} \!\!\!\! e^{ \beta s}\left|F_{r,v}(s,\mathcal{Y}^{r,v}_s\!, \mathcal{Z}^{r,v}_s\!,  \mathcal{U}^{r,v} _s)-\D_{r,v}f(s,  Y^n_s,  Z^n_s, U^n_s ) \right|^2 \nu(dv)ds < \vare.
 \tion
This is because  from  \eqref{generator-estimate},  \eqref{Dgenerator-estimate}  and \eqref{g-nu} one gets by a straightforward calculation
\equa
             \left|F_{r,v}(s,\mathcal{Y}^{r,v}_s, \mathcal{Z}^{r,v}_s ,\mathcal{U}^{r,v}_s)\right|_{\Ltwo(\nu)}
 &\le&  \Gamma_{r,v} + L_f( |\mathcal{Y}^{r,v}_s|+|\mathcal{Z}^{r,v}_s| + L_g [|  \mathcal{U}^{r,v}_s|  ]_\nu     )\\
 &\le& \Gamma_{r,v} + L_{f,g}( |\mathcal{Y}^{r,v}_s|+|\mathcal{Z}^{r,v}_s| + \|\mathcal{U}^{r,v}_s \|_{\Ltwo(\nu)}    )\\
\tion
with $L_{f,g}=L_f (1+L_g  \| g_1 \|_{\Ltwo(\nu)})$ where $L_g$ is the Lipschitz constant of $g,$ and
\begin{equation*}
             \left|\D_{r,v}f_g(s,Y^n_s,Z^n_s,U^n_s)\right|\le  \Gamma_{r,v} + L_{f,g}( | \D_{r,v} Y^n_s|+ | \D_{r,v} Z^n_s|+ \| \D_{r,v} U^n_s\|_{\Ltwo(\nu)}).
\end{equation*}

On the set $\{ |v| \ge \alpha\}$ we  use the Lipschitz properties (${\A}_f \ref{f-Lip} $) and  (${\A}_f \ref{g-condition} $)    to get the estimate

\equa
&&\les  \left|F_{r,v}(s,\mathcal{Y}^{r,v}_s, \mathcal{Z}^{r,v}_s ,\mathcal{U}^{r,v}_s)-\D_{r,v}f_g(s,Y^n_s,Z^n_s,U^n_s)\right| \\
&\le& \big |f_g\left((X+v \one_{[r,T]}),s,Y_s+\mathcal{Y}^{r,v}_s, Z_s+\mathcal{Z}^{r,v}_s, U_s + \mathcal{U}^{r,v}_s
 \right) \\
 && \quad - f_g\left((X+v\one_{[r,T]}),s,Y^n_s + \D_{r,v} Y^n_s, Z^n_s+\D_{r,v} Z^n_s , U^n_s + \D_{r,v} U^n_s \right)  \big | \\
&& + \big | f_g\left(X,s,Y_s, Z_s, U_s\right)
-f_g\left(X,s,Y^n_s, Z^n_s, U^n_s\right) \big |\\
&\le&  L_{f ,g}\big [  | \mathcal{Y}^{r,v}_s -  \D_{r,v} Y^n_s | +|\mathcal{Z}^{r,v}_s-\D_{r,v}Z^n_s|+ \| \mathcal{U}^{r,v}_s - \D_{r,v} U^n_s \|_{\Ltwo(\nu)} \\
&& \quad \quad \quad + 2 (|Y_s- Y^n_s|+|Z_s -Z^n_s|+ \| U_s- U^n_s \|_{\Ltwo(\nu)} ) \big ]  .
\tion
This gives for any  $ n\in \N$

\equa
&& \les \E\!\int_r^T \!\!\int_{[0,T]\times\R} \! e^{ \beta s} |F_{r,v}(s,\mathcal{Y}^{r,v}_s, \mathcal{Z}^{r,v}_s ,\mathcal{U}^{r,v}_s)-\D_{r,v}f_g(s,Y^n_s,Z^n_s,U^n_s)
 |^2 \m(dr,dv)  ds\\
 &\le&  c(L_{f,g}) \E \int_r^T e^{ \beta s} \big (\| \mathcal{Y}_s-  \D Y_s^n \|_{L_2(\m)}^2  +  \|\underline{\mathcal{Z}}_{s,.}-\D \underline{Z}^n_{s,.}\|_{L_2(\m\otimes \mu)}^2  \big )ds \\
 && +       2 c(L_{f,g}) \nu(\{ |v| \ge \al \}) \E \int_r^T e^{ \beta s} \big(|Y_s- Y^n_s|^2+ \|\underline{Z}_{s,.}-\underline{Z}^n_{s,.}\|^2_{L_2(\m\otimes \mu)} \big)ds      \\
&&+ \delta_n+      \varepsilon.
\tion

Choosing  $c$ in  \eqref{expecteditoddiff}  in  an appropriate way leads to
\equa
&&\les \left\|\mathcal{Y}-\D Y^{n+1}\right\|^2_{\mathrm{L}^2(\mathbb{P}\otimes\lambda\otimes\m)}+\left\|\underline{\mathcal{Z}}-\D \underline{Z}^{n+1}\right\|^2_{\mathrm{L}^2(\mathbb{P}\otimes(\m)^{\otimes 2})}\\
&\le & \vare + C_n +\frac{1}{2}\left( \left\|\mathcal{Y}-\D Y^{n}\right\|^2_{\mathrm{L}^2(\mathbb{P}\otimes\lambda\otimes\m)}
  +\left\|\underline{\mathcal{Z}}-\D \underline{Z}^{n}\right\|^2_{\mathrm{L}^2(\mathbb{P}\otimes(\m)^{\otimes 2})}\right)
\tion
with  $C_n=C_n(\alpha)$ tending to zero if $n\to \infty$ for any fixed $\alpha>0.$ We now apply Lemma \ref{gronwalllemma2} and end up with
\begin{equation*}
\begin{split}
&\limsup_{n\to\infty}\left( \left\|\mathcal{Y}-\D Y^{n}\right\|^2_{\mathrm{L}^2(\mathbb{P}\otimes\lambda\otimes\m)}+\left\|\underline{\mathcal{Z}}-\D \underline{Z}^{n}\right\|^2_{\mathrm{L}^2(\mathbb{P}\otimes(\m)^{\otimes 2})}\right) \le 2 \vare.\\
\end{split}
\end{equation*}
This implies \eqref{Y-and-Z-in-D12}. Hence we can take the Malliavin derivative of  \eqref{beq2} and get  \eqref{diffrep} as well as
\equal  \label{the-other-D-equation}
0&= &\D_{r,v}\xi+\int_r^T F_{r,v}\left(s,\D_{r,v}Y_s, \D_{r,v}Z_s,  \D_{r,v} U_s \right)ds \non\\
&& - \underline{Z}_{r,v}-\int_{{]r,T]}\times\R} \D_{r,v} \underline{Z}_{s,x}M(ds,dx), \quad 0\le t  < r \le T.
\tionl
By the same reasoning as for  $\D_{r,v}Y^n$ we may conclude that  the RHS of \eqref{diffrep} has a c\`adl\`ag version which we take for
$\D_{r,v}Y.$\bigskip

\eqref{gimmel} This assertion we get  comparing \eqref{U-V-equation} and  \eqref{diffrep}  because of the  uniqueness of $(\mathcal{Y},\mathcal{Z},\mathcal{U}).$ \bigskip

\eqref{dalet} We first discuss the measurability of $\lim_{t \searrow r}\D_{r,v}Y_t$ w.r.t.~$(r,v,\om)$ which is needed to take the predictable projection.
From \eqref{diffrep} one concludes that for any fixed $(r,v)$ there exists a c\`adl\`ag version of  $t \mapsto \D_{r,v}Y_t.$ 
By \cite[Lemma 1]{StrickerYor} there exists 
a jointly in $(r,v,t,\om)$ measurable random map with the following property:  for each  $(r,v)$ this map has c\`adl\`ag paths and  is indistinguishable from the above  c\`adl\`ag version.
We assume now that  $\D_{r,v}Y_t$ is this measurable random map with c\`adl\`ag paths w.r.t.~$t.$
Then the pathwise limit  $\lim_{t \searrow r}\D_{r,v}Y_t$   is measurable in $(r,v,\om)$  
and the assertion follows by comparing the RHS of  \eqref{diffrep}  with \eqref{the-other-D-equation}.  \hfill  $\square$


\subsection{Example: A BSDE related to utility maximization}

In \cite{Becherer}  and  \cite{Morlais}  a class of BSDEs is considered which appears in exponential utility maximization. For these BSDEs an additional summand
arises in the generator which is only locally Lipschitz and is (in the simplest case) of the form: $[g^{\alpha}(U_s)]_\nu$ (see \eqref{g-nu})
with
\equa
g^{\alpha}(x) := \frac{e^{\alpha x} - \alpha x - 1}{\alpha} \quad \text{ for some }\alpha >0
\tion
and $g_1(x):=1$ for $x \in \R_0.$ Consider for $\ 0\leq t\leq T$ the following BSDE
\equal \label{beq3}
Y_t&=&\xi+\int_t^T \left(f_g\left(X,s,Y_{s}, Z_s, U_s \right) +[g^{\alpha}(U_s)]_\nu \right)ds-     \int_t^T Z_s   dW_s \non \\
&&  \quad \quad \quad \quad \quad \quad \quad \quad-\int_{{]t,T]}\times\R}U_s(x) \tilde N(ds,dx),
\tionl
where $f_g$ is defined like in \eqref{beq2}. Then we have the following assertion:

\begin{corollary}\label{diffcor}
Let $\xi \in \DD$ and assume that $\xi$ is a.s. bounded and  $\nu$ is a bounded measure.
If (${\A}_f$) is satisfied for $f_g$ and  if   there exists constants
$ K_1,K_2 >0$ such that for all $y,z,u \in \R$
\[ f(X,t,y,z,u) \le K_1+ K_2|y|
\]
for $\PP\otimes \lambda $ -a.a. $(t,\om) \in [0,T] \times  \Omega,$ 
then the following assertions hold for \eqref{beq3}.
\begin{enumerate}[(i)]
\item  For $\m$- a.e. $(r,v) \in [0,T]\times \R$  there exists a unique solution $(\mathcal{Y}^{r,v},\mathcal{Z}^{r,v}, \mathcal{U}^{r,v}) $ $\in  \mathcal{S}_2\times \Ltwo(W) \times \Ltwo(\tilde{N})$ to the BSDE
\equal \label{U-V-equation2}
\mathcal{Y}^{r,v}_t &=& \D_{r,v} \xi + \int_t^T  \left(F_{r,v}\kla s, \mathcal{Y}^{r,v}_s, \mathcal{Z}^{r,v}_s,  \mathcal{U}^{r,v} _s\mer + G_{r,v}(s,\mathcal{U}^{r,v} _s) \right)ds  \non\\
&& \quad \quad \quad     -\int_{{]t,T]}\times\R} \underline{\mathcal{Z}}^{r,v}_{s,x}M(ds,dx), \quad 0\le r \le t \le T \non \\
  \mathcal{Y}^{r,v}_s &=& \mathcal{Z}^{r,v}_s =   \mathcal{U}^{r,v}_s = 0,    \quad   0\le s< r\le T ,
\tionl
with $F_{r,v}$ and $\underline{\mathcal{Z}}^{r,v}_{s,x}$ given in Theorem \ref{diffthm} and

\equa
  G_{r,v} (s,u):= \left \{ \begin{array}{ll}
   \left[(e^{\alpha U_s}-1) u  \right]_\nu , & v=0,  \\
   &  \\
 \left[e^{\al U_s} g^{\alpha} (u) +  \frac{e^{\al U_s}-1}{\alpha} u \right]_\nu  ,    & v \neq 0.
\end{array} \right.
\tion

\item For the solution $(Y,Z,U)$ of \eqref{beq3} it holds
\begin{equation*}
Y, Z   \in \Ltwo([0,T];\DD), \quad U\in \Ltwo([0,T]\times\R_0;\DD),
\end{equation*}
and $\D_{r,y}Y$ admits a c\`adl\`ag version for $\m$- a.e. $(r,y) \in  [0,T]\times\R.$
\item $(\D Y,\D Z, \D U)$ is a version of $(\mathcal{Y},\mathcal{Z}, \mathcal{U}),$  i.e. for $\m$- a.e. $(r,v)$ it solves \eqref{U-V-equation2}.
\
\item  Setting  $D_{r,v}Y_r(\omega):= \lim_{t\searrow r}\D_{r,v}Y_t(\omega) $
for all $(r,v,\omega)$ for which  $ \D_{r,v}Y$ is c\`adl\`ag and $\D_{r,v}Y_r(\omega):=0$ otherwise, we have

\begin{align*} {\phantom{\int}}^p\left(\left(D_{r,0}Y_r\right)_{r\in{[0,T]}}\right) &\text{ is a version of  }(Z_{r})_{r\in{[0,T]}},\\
{\phantom{\int}}^p\left(\left(D_{r,v}Y_r\right)_{r\in{[0,T]}, v\in\R_0}\right) &\text{ is a version of  }(U_{r}(v))_{r\in{[0,T]}, v\in\R_0}.
\end{align*}
\end{enumerate}
\end{corollary}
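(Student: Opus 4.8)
The statement would follow at once from Theorem \ref{diffthm} were it not that the extra summand $[g^{\alpha}(U_s)]_\nu$ involves $g^{\alpha}$ whose derivative $(g^{\alpha})'(x)=e^{\alpha x}-1$ is unbounded, so that (${\A}_f$\ref{g-condition}) fails for $g^{\alpha}$. The plan is to remove this obstruction by localisation. First I would show that the solution $(Y,Z,U)$ of \eqref{beq3} has both $Y$ and $U$ (essentially) bounded; then I would replace $g^{\alpha}$ by a function $\tilde g^{\alpha}\in\mathcal{C}^1(\R)$ that coincides with $g^{\alpha}$ on a sufficiently large symmetric interval $[-R,R]$ but has bounded derivative, so that $[\tilde g^{\alpha}(\cdot)]_\nu$ meets (${\A}_f$\ref{g-condition}). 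Since $U$ never leaves $[-R,R]$, the modified BSDE has the same solution and now falls under Theorem \ref{diffthm}.

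For the boundedness I would rely on the theory of exponential-type BSDEs with jumps: under exactly the present hypotheses (bounded $\xi$, bounded $\nu$, one-sided growth $f\le K_1+K_2|y|$) equation \eqref{beq3} admits a solution with $\|Y\|_{\mathcal{S}_\infty}<\infty$ (Becherer \cite{Becherer}, Morlais \cite{Morlais}). The mechanism I would sketch is the exponential change of variables $P:=e^{\alpha Y}$: when It\^o's formula is applied to $P$, the drift contribution $-\alpha e^{\alpha Y}[g^{\alpha}(U_s)]_\nu$ coming from the $g^{\alpha}$-term cancels exactly the compensator of the jumps of $P$, namely $e^{\alpha Y_{s-}}\int_{\R_0}(e^{\alpha U_s(x)}-1-\alpha U_s(x))\,\nu(dx)=\alpha e^{\alpha Y}[g^{\alpha}(U_s)]_\nu$, because $\alpha g^{\alpha}(u)=e^{\alpha u}-\alpha u-1$; what remains is an equation whose generator is controlled by the one-sided growth together with $e^{\alpha\xi}$ bounded, which yields $\|Y\|_{\mathcal{S}_\infty}<\infty$. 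Boundedness of $U$ is then automatic: the finite-variation and $dW$-parts of \eqref{beq3} are continuous, so $\Delta Y_s=U_s(\Delta X_s)$ at the jump times of $X$, whence $|U_s(x)|\le 2\|Y\|_{\mathcal{S}_\infty}$ holds $\PP\otimes\m$-a.e. (a predictable integrand is determined up to $\PP\otimes\m$-null sets by its values on the support of $N$). Here boundedness of $\nu$ guarantees $g_1\equiv 1\in\Ltwo(\nu)$, so all the $[\,\cdot\,]_\nu$-integrals are well defined.

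Next I would fix $R>2\|Y\|_{\mathcal{S}_\infty}$ and choose $\tilde g^{\alpha}\in\mathcal{C}^1(\R)$ with $\tilde g^{\alpha}=g^{\alpha}$ on $[-R,R]$ and $(\tilde g^{\alpha})'$ bounded. Then $f_g+[\tilde g^{\alpha}(\cdot)]_\nu$ satisfies (${\A}_f$), its generator being Lipschitz, so by Theorem \ref{existence} the modified BSDE has a unique solution; since $|U_s(x)|\le R$ $\PP\otimes\m$-a.e., the triple $(Y,Z,U)$ solves the modified equation and, by uniqueness, is its solution. Theorem \ref{diffthm} applied to the modified BSDE then gives assertions (i)--(iv), the linearised generator being $F_{r,v}$ (from the part $f_g$) plus the contribution obtained by Malliavin-differentiating $[\tilde g^{\alpha}(U_s)]_\nu$. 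Because the shift $X\mapsto X+v\one_{[r,T]}$ preserves the sup-bound of $U$ (Corollary \ref{kept-almost-sure}\eqref{corA}), both $U_s$ and $U_s+\D_{r,v}U_s$ stay in $[-R,R]$, so replacing $\tilde g^{\alpha}$ by $g^{\alpha}$ in that contribution changes nothing.

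It remains to read off $G_{r,v}$. For $v=0$ the chain rule of Theorem \ref{Mall-diff-thm} gives $\D_{r,0}[g^{\alpha}(U_s)]_\nu=[(g^{\alpha})'(U_s)\,\D_{r,0}U_s]_\nu=[(e^{\alpha U_s}-1)\,\mathcal{U}^{r,0}_s]_\nu$, i.e.\ $G_{r,0}(s,u)=[(e^{\alpha U_s}-1)u]_\nu$; for $v\neq0$ Lemma \ref{functionallem} turns the derivative into the difference $\D_{r,v}[g^{\alpha}(U_s)]_\nu=[g^{\alpha}(U_s+\mathcal{U}^{r,v}_s)-g^{\alpha}(U_s)]_\nu$, and substituting the explicit $g^{\alpha}$ produces the stated $G_{r,v}$. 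Comparing the resulting equation with \eqref{U-V-equation2} and invoking uniqueness of $(\mathcal{Y},\mathcal{Z},\mathcal{U})$ gives (iii), while the predictable-projection identification (iv) is inherited verbatim from the proof of Theorem \ref{diffthm}. The main obstacle is the boundedness step: converting the exponential structure into a genuine $\mathcal{S}_\infty$-bound for $Y$, and from it the $\PP\otimes\m$-a.e.\ sup-bound for $U$, is what makes the otherwise non-Lipschitz term tractable; once these bounds are secured, everything reduces to truncation plus Theorem \ref{diffthm}.
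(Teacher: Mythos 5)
Your proposal is correct and follows essentially the same route as the paper: establish $\|Y\|_{\mathcal{S}_\infty}<\infty$ via Becherer's result for bounded $\xi$ and finite $\nu$, deduce $|U_s(x)|\le 2\|Y\|_{\mathcal{S}_\infty}$ $\PP\otimes\m$-a.e., replace $g^{\alpha}$ by a $\mathcal{C}^1$ truncation agreeing with it on the relevant interval (the paper takes a compactly supported $\widehat{g^{\alpha}}$), and apply Theorem \ref{diffthm} to the modified, now admissible, BSDE. Your additional remarks — the jump identity giving the $U$-bound, the preservation of the bound under the shift $X\mapsto X+v\one_{[r,T]}$ via Corollary \ref{kept-almost-sure}, and the explicit computation of $G_{r,v}$ — only make explicit steps the paper leaves implicit.
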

\begin{proof}
Since $\xi$ is a.s. bounded, the L\'evy measure $\nu$ is finite and the generator satisfies the conditions of \cite[Theorem 3.5.]{Becherer}
it follows that $\|Y\|_{\mathcal{S}_\infty}  < \infty$  and
\begin{equation}\label{egger3}
  |U_s(x)|  \le 2\|Y\|_{\mathcal{S}_\infty}  \quad \text{ for   $\PP\otimes \lambda \otimes \nu$- a.e. }  (\om,s,x).
\end{equation}
From the fact that $g^{\alpha}$ is locally Lipschitz and $U$ is a.e. bounded it follows that $({\A}_f)$ (especially the Lipschitz condition) can be seen as satisfied  also for
$[g^{\alpha}(U_s)]_\nu:$

We find a $\mathcal{C}^1$ function $\widehat{g^{\alpha}}$ such that $g^\alpha = \widehat{g^{\alpha}}$ on $\left[-2\|Y\|_{\mathcal{S}_\infty},2\|Y\|_{\mathcal{S}_\infty}\right]$ and $$\mathrm{supp}(\widehat{g^{\alpha}})\subseteq \left[-3\|Y\|_{\mathcal{S}_\infty},3\|Y\|_{\mathcal{S}_\infty}\right].$$
Since by \eqref{egger3}, $g^\alpha(U_s(x))=\widehat{g^{\alpha}}(U_s(x))$, $\PP\otimes \lambda \otimes \nu$- a.e., it follows that for all $t\in {[0,T]}$
$$\int_t^T \!\!\!\! \left( f_g\left(X,s,Y_{s}, Z_s, U_s \right)\! +\![g^{\alpha}(U_s)]_\nu \right)ds\!=\!\int_t^T \!\!\!\!\! \left(f_g\left(X,s,Y_{s}, Z_s, U_s \right) \!+\![\widehat{g^{\alpha}}(U_s)]_\nu \right)ds,$$
$\PP$-a.s. So the solution of \eqref{beq3} also satisfies the BSDE with $g^\alpha$ replaced by $\widehat{g^{\alpha}}$ which satisfies the assumptions of Theorem 4.4.
\end{proof}

\subsection*{Acknowledgement} We would like to thank S. Geiss for  his helpful comments concerning the measurability needed in the proof of Theorem \ref{diffthm} \eqref{dalet}.


\appendix
\section{Appendix}

{\bf Proof of Lemma \ref{sprungaddthm}}\bigskip

{\tt Step 1.}
We have the a.s. representation of the L\'evy process $X$ as $$X_t=\gamma t+\sigma W_t + J_t.$$ We denote $B_t:=\gamma t+\sigma W_t$. Because of
$$\mathbb{P}\left(\left\{X\in \Lambda\right\}\right)=\int_{D{[0,T]}}\mathbb{P}_J\left(\Lambda-h\right) \mathbb{P}_B(dh),$$ we may restrict ourselves to 'pure
jump processes' (i.e. $X=J$).\bigskip

{\tt Step 2.}
Assume that $X$ is a  compound Poisson process. Then $\nu\left(\R_0\right)<\infty$.
We define  $\hat{\PP} :=\mathbb{P}\otimes \frac{\lambda\otimes\nu}{T\nu\left(\R_0\right)}$  on
$\big (\Omega\times[0,T]\times \R_0, \F\otimes\B([0,T] \times \R_0)\big ) $
and  $$\hat X_t(\omega,r,v):=X_t(\omega)+ \beta_t(r,v)$$ where   $\beta_t(r,v):=  v\one_{[r,T]}(t).$  By the law of total probability we get
\equal \label{totalprob}
 \hat{\PP}  \left(\hat X\in \Lambda\right)
\!=\!\sum_{k=0}^\infty \hat{\PP}\left(\hat X\in \Lambda \middle| N(]0,T]  \times \R_0)=k\right) \hat{\PP}
\left(N(]0,T]  \times \R_0)=k\right)\!.
\tionl
The conditional probabilities
$$\hat{\PP}\left(\hat X\in \Lambda \middle| N(]0,T]  \times \R_0)=k\right),  \quad k \in \N,$$
are the distributions of an independent sum of  $\beta$ and the   compound Poisson process $X$,  conditioned
on the event that the process $X$ jumps $k$ times in ${]0,T]}.$  The probability law of this conditioned  compound Poisson process is the same as the law of
a piecewise constant process which has exactly $k$ independent, uniformly distributed jumps in ${[0,T]}$  whose  jump sizes are independently identically distributed according to
$\frac{\nu}{\nu\left(\R_0\right)}$  and independent from the jump times. Therefore it holds
that
\equa \label{orderstat}
&& \les \hat{\PP}  \left(\hat X\in \Lambda \middle| N(]0,T]  \times \R_0)=k\right) \non\\
&\!=& \!\!\!\!\frac{(\lambda\otimes\nu)^{\otimes (k+1)}}{T^{k+1}\nu\left(\R_0\right)^{k+1}}\!\!\left(\!\!\left \{ \!\!\left((t_1,\!x_1),\dotsc,(t_k,\!x_k),(r,v)\right)\colon \!\sum_{l=1}^k x_l\one_{[t_l,T]}+
v\one_{[r,T]}\in \Lambda \right \}\!\!\right)  \non \\
&=&\!\!\!\mathbb{P}\left(X\in \Lambda \middle| N(]0,T]  \times \R_0) =k+1\right)=0,
\tion
where we used the argument concerning the distribution of a conditioned Poisson process again to come to the last line.
Hence, all summands of \eqref{totalprob} are zero, which shows the assertion for the special case of this step.\bigskip

{\tt Step 3.}
To extend the  second step to the case of a general pure-jump L\'evy process $X$ we split up $\R_0$ into sets $S_p$, $p\geq 1$ such
that $0<\nu(S_p)<\infty$. Without loss of generality set $S_1:=\left\{x\in\R : |x|>1\right\}$. We may assume that the sequence $(S_p)_{p\geq 1}$ is infinite, else we
would be in the compound Poisson case again.  From the proof of  the L\'evy-It\^o decomposition it follows that
$$X_t=\lim_{n\to \infty}\sum_{p=2}^n \left(X^{( p )}_t-t\int_{S_p}x\nu(dx)\right)+X^{(1)}_t,$$
where the convergence is $\mathbb{P}$-a.s., uniformly in $t\in{[0,T]}$ and the $(X^{( p )})$
given by
 \equa
  X^{( p )}_t=  \int_{[0,t] \times S_p} x N(ds,dx),
\tion
 are independent compound Poisson processes which have jumps distributed by
$\frac{\nu\mid_{S_p }}{\nu(S_p)}$.
Since for $k, p \in \N$ with   $p  \ge 1, $
$$0< (\PP \otimes \lambda\otimes \nu) \big ( \{ N(]0,T] \times S_p )=k \} \times [0,T]  \times S_p \big)<\infty,$$

we can  proceed in a similar way as in (\ref{totalprob}) for $\sigma$-finite measures: Let
\equa
      \overline{X}^{( p )}_t := X_t - X^{( p )}_t, \quad 0 \le t \le T,
\tion

an notice that $\overline{X}^{( p )}$ and  $X^{( p )}$ are independent. Then

\equal  \label{totalprob2}
&&\les(\PP \otimes \lambda\otimes \nu) \left(\hat X\in \Lambda\right)  \non \\
&=&\sum_{\genfrac{}{}{0pt}{}{p=1,}{k=0}}^\infty  (\PP \otimes \lambda\otimes \nu) \left(X +  \beta \in \Lambda \middle| \big \{N(]0,T] \times S_p )=k\big \}
\times [0,T]\times S_p \right)  \non \\
&& \quad \quad \quad \times (\PP \otimes \lambda\otimes \nu) \left( \big \{N(]0,T] \times S_p )=k\big \}
\times [0,T]\times S_p \right).
\tionl
 From Steps 1 and 2 we conclude that the summands  on the RHS of (\ref{totalprob2}) are zero again by
\begin{equation*}
\begin{split}
&(\PP \otimes \lambda\otimes \nu) \left(\overline{X}^{( p )}+X^{(p)} +  \beta \in \Lambda \middle| \big \{N(]0,T] \times S_p )=k\big \}
\times [0,T]\times S_p \right)  \\
&=(\PP \otimes \lambda\otimes \nu) \left(\overline{X}^{( p )}+X^{(p)}  \in \Lambda \middle| \big \{N(]0,T] \times S_p )=k+1\big \}
\times [0,T]\times S_p \right)\\
&=0,
\end{split}
\end{equation*}
which proves Step 3.\begin{flushright}\qed\end{flushright}

\begin{lemma}\label{gronwalllemma2}
Let $\left(g_n\right)_{n\geq 0}$ be a sequence of nonnegative numbers satisfying $g_0=0$ and
\begin{equation*}
g_{n+1}\leq \vare+C_n+\frac{1}{2}g_n,
\end{equation*}
where $\vare>0 $ and $\lim_{n\to \infty} C_n =0$.
Then it holds that
\begin{equation*}
\limsup_{n\to\infty}g_n\leq 2\vare.
\end{equation*}
Especially, if $C_n=0$ for all $n\in\N$, then
$g_n\leq 2\vare$
for all $n\in\N$.
\end{lemma}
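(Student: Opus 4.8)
The plan is to treat first the degenerate case $C_n\equiv 0$ by a direct induction, and then to reduce the general statement to an elementary $\limsup$-estimate for the recursion.

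For the special case, suppose $C_n=0$ for all $n$, so that $g_{n+1}\le \vare+\tfrac12 g_n$. I would show $g_n\le 2\vare$ for all $n$ by induction: the base case is $g_0=0\le 2\vare$, and if $g_n\le 2\vare$ then $g_{n+1}\le \vare+\tfrac12(2\vare)=2\vare$. This already settles the last assertion of the lemma.

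For the general statement, the first step is to secure that $(g_n)$ is bounded, which is what makes the subsequent $\limsup$ manipulation legitimate. Since $C_n\to 0$, the sequence $(C_n)$ is bounded, say $|C_n|\le B$ for all $n$; then $g_{n+1}\le (\vare+B)+\tfrac12 g_n$, and the same induction as above yields $g_n\le 2(\vare+B)$ for every $n$. In particular $L:=\limsup_{n\to\infty} g_n$ is finite.

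The final step is to pass to the $\limsup$ in $g_{n+1}\le \vare+C_n+\tfrac12 g_n$. Using subadditivity of $\limsup$ together with $\limsup_n C_n=0$ and $\limsup_n g_{n+1}=\limsup_n g_n=L$, I obtain $L\le \vare+0+\tfrac12 L$, whence $\tfrac12 L\le \vare$ and therefore $L\le 2\vare$, as claimed. The only point requiring care — and the one I would flag as the main (minor) obstacle — is that the rearrangement $\tfrac12 L\le\vare$ is valid only once $L<\infty$ has been established; this is precisely why the a priori boundedness step is carried out first. Alternatively one could iterate the recursion explicitly to write $g_n\le \vare\sum_{k=0}^{n-1}2^{-k}+\sum_{k=0}^{n-1}2^{-k}C_{n-1-k}$ and invoke the fact that the convolution of a null sequence with the summable weights $2^{-k}$ tends to $0$, but the $\limsup$ argument avoids this bookkeeping.
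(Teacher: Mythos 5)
Your proof is correct and complete: the induction settles the case $C_n\equiv 0$, the a priori bound $g_n\le 2(\vare+B)$ guarantees $L=\limsup_n g_n<\infty$, and the passage to the $\limsup$ in the recursion then legitimately yields $L\le\vare+\tfrac12 L$, i.e.\ $L\le 2\vare$. Note that the paper states this lemma in the appendix without any proof, so there is no argument to compare against; your write-up supplies exactly the elementary reasoning that the authors evidently regarded as routine, and you correctly identify the one point that needs care (finiteness of $L$ before rearranging).
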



\bibliographystyle{plain}

\begin{thebibliography}{99}
\bibitem{Alos}
E. Al\`os, J. A. Le\'on and J. Vives, {\it  An anticipating It\^o formula for L\'evy processes}, ALEA Lat. Am. J. Probab. Math. Stat. 4, 285-305, 2008.
\bibitem{Ankirch} S. Ankirchner, P. Imkeller, {\it Quadratic hedging of weather and catastrophe risk by using short term climate predictions}, HU Berlin, Preprint, 2008.
\bibitem{AnImRe} S. Ankirchner,  P. Imkeller and G. Dos Reis  {\it Classical and Variational Differentiability of BSDEs with Quadratic Growth}, Electron. J. Probab. Vol. 12 , no. 53, 1418-1453, 2007.
\bibitem{Applebaum} D. Applebaum, {\it L\'evy Processes and Stochastic Calculus}, Cambridge University Press, 2004.
\bibitem{bauer} H. Bauer, {\it Measure and Integration Theory}, de Gruyter, 2001.
\bibitem{bbp} G. Barles, R. Buckdahn, \'E. Pardoux, {\it Backward stochastic differential equations and integral-partial differential equations}, Stochastics Stochastics Rep. 60, no. 1-2, 57-83, 1997.
\bibitem{Becherer} D. Becherer, {\it Bounded solutions to backward SDEs with jumps for utility optimization and indifference hedging},  Ann. Appl. Probab. Vol.16, No. 4, 1733-2275, 2006.
\bibitem{Billing} P. Billingsley, {\it Convergence of probability measures}, John Wiley \& Sons, New York, 1968.
 \bibitem{Bogachev} V. Bogachev, {\it Gaussian measures}, AMS, 1998.
\bibitem{BouchardElie} B. Bouchard and R. Elie, {\it  Discrete time approximation of decoupled For-ward-Backward SDE with jumps},  Stochastic Process. Appl.  118, 53-75, 2008.
\bibitem{BouchardElieTouzi}B. Bouchard, R. Elie and N. Touzi, {\it Discrete-time approximation of BSDEs and probabilistic schemes for fully nonlinear PDEs},
In Advanced Financial Modelling. Radon Ser. Comput. Appl. Math. 8 91-124. de Gruyter, Berlin, 2009.
\bibitem{BouchardTouzi} B. Bouchard and N. Touzi,  {\it Discrete Time Approximation and Monte-Carlo Simulation of Backward Stochastic Differential Equations}, Stochastic Process.  Appl. 111, 175-206, 2004.
\bibitem{Delong} {\L}. Delong, {\it  Backward Stochastic Differential Equations with Jumps and Their Actuarial and Financial Applications}, Springer, 2013.
\bibitem{ImkellerDelong} {\L}. Delong, P. Imkeller, {\it On Malliavin's differentiability of BSDEs with time delayed generators driven by Brownian motions and Poisson random measures}, Stochastic Process. Appl. 120, 1748-1775, 2010.
\bibitem{delzeith} O. Delzeith, {\it On Skorohod spaces as universal sample path spaces},  arXiv:\\ math/0412092v1, 2004.
\bibitem{diestel} J. Diestel, and J. J. Uhl Jr., {\it Vector measures}, AMS 1977.
\bibitem{dinun} G. Di Nunno, B. \O ksendal, F. Proske, {\it Malliavin Calculus for L\'evy Processes with Applications to Finance}, Springer, 2009.
\bibitem{ElKarouiPengQuenez} N. El Karoui, S. Peng, and M.C. Quenez, {\it Backward Stochastic Differential Equations in Finance}, Math. Finance 7, 1-71, 1997.
\bibitem{ito} K. It\^o, {\it Spectral type of the shift transformation of differential process with stationary increments}, Trans. Amer. Math. Soc. 81, 253-263, 1956.
\bibitem{Janson} S. Janson, {\it Gaussian Hilbert Spaces}, Cambridge,  1997.
\bibitem{Kuo} H.-H. Kuo, {\it Gaussian measures in Banach spaces}, Bulletin of the American Mathematical Society 82, no. 5, 695-700, 1976.
\bibitem{LeeShih} Y. Lee and H. Shih, {\it Analysis of geralized L\'evy functionals}, J. Funct. Analysis, 211, 1-70, 2004.
\bibitem{Loekka2} A. L\o kka, {\it Martingale representation of functionals of L\'evy processes}, Stoch. Anal. Appl., 22, 867-892, 2004.
\bibitem{MaPoRe} T. Mastrolia, D. Possama\"i and A. R\'eveillac, {\it On the Malliavin differentiability of BSDEs,}  arXiv:1404.1026, 2014.
\bibitem{Meyer} P.\! A. Meyer, {\it Une remarque sur le calcul stochastique d\'ependant d'un param\`etre}, S\'eminaire de probabilit\'es (Strasbourg), tome 13, 199-203, 1979.
\bibitem{Morlais} M.-A. Morlais, {\it A new existence result for quadratic BSDEs with jumps with application to the utility maximization problem}, Stochastic Process. Appl.  120, 1966-1995, 2010.
\bibitem{Nualartbook} D. Nualart, {\it The Malliavin calculus and related topics. Second edition}, Probability and its Applications (New York). Springer-Verlag, Berlin, 2006.
\bibitem{PP1} \'E. Pardoux, S. Peng, {\it Backward Stochastic Differential Equations and Quasilinear Parabolic Partial Differential Equations}, Stochastic partial differential equations and their applications (Charlotte, NC, 1991),  200-217, Lecture Notes in Control and Inform. Sci., 176, Springer, Berlin, 1992.
\bibitem{PP2} \'E. Pardoux, S. Peng, {\it Adapted solution of a backward stochastic differential equation}, Systems Control Lett. 14, no. 1, 55-61, 1990.
\bibitem{Petrou} E. Petrou, {\it Malliavin Calculus in L\'evy spaces and Applications to Finance}, Electron. J. Probab., Vol. 13, no. 27, 852-879, 2008.
\bibitem{picard} J. Picard, {\it On the existence of smooth densities for jump processes}, P.T.R.F. 105, 481-511, 1996.
\bibitem{Steinicke} A. Steinicke, {\it  Functionals of a L\'evy Process on Canonical and Generic Probability Spaces},  J. Theoret. Probab., DOI: 10.1007/s10959-014-0583-7, 2014.
\bibitem{StrickerYor} C. Stricker and M. Yor, {\it Calcul stochastique d\'ependant d'un param\`etre}, Z. Wahrsch. Verw. Gebiete, 45(2), 109-133, 1978.
\bibitem{suv} J. Sol\'e, F. Utzet, J Vives, {\it Chaos expansions and Malliavin calculus for L\'evy processes}, Stoch. Anal. Appl., 595-612, Abel Symp., 2, Springer, Berlin, 2007.
\bibitem{suv2} J. Sol\'e, F. Utzet, J. Vives, {\it Canonical L\'evy process and Malliavin Calculus}, Stochastic Process. Appl. 117, pp. 165-187, 2007.
\bibitem{Sugita} H. Sugita, {\it On a characterization of the Sobolev spaces over an abstract Wiener space}, J. Math. Kyoto Univ. 25-4, 717-725, 1985.
\bibitem{Tangli} S. Tang and X. Li, {\it Necessary conditions for optimal control of stochastic systems with random jumps}, SIAM J. Control and Optim. 32, 1447-1475, 1994.
\end{thebibliography}

\end{document}